\renewcommand*\env@matrix[1][*\c@MaxMatrixCols c]{%
  \hskip -\arraycolsep
  \let\@ifnextchar\new@ifnextchar
  \array{#1}}
\tikzset{
	ch/.style={circle,draw,on chain,inner sep=2pt},
	chj/.style={ch,join},
	every path/.style={shorten >=4pt,shorten <=4pt}
	}
\newcommand{\dnode}[2][chj]{%
	\node[#1,label={below:#2}] (#1) {};}
\newcommand{\dnodenj}[1]{%
	\dnode[ch]{#1}}
\newcommand{\dydots}{%
	\node[chj,draw=none,inner sep=1pt] {\dots};}
\numberwithin{equation}{subsection}
\newcommand{\bbH}{\mathbb{H}}
\newcommand{\bbU}{\mathbb{U}}
\newcommand{\bSjj}{\mathbb{S}^{\fc}_{n,d}}
\newcommand{\bSij}{\mathbb{S}^{\imath\jmath}_{\nn,d}}
\newcommand{\bSji}{\mathbb{S}^{\jmath\imath}_{\nn,d}}
\newcommand{\bSii}{\mathbb{S}^{\imath\imath}_{\eta,d}}
\newcommand{\bTjj}{\mathbb{T}^{\fc}_{n,d}}
\newcommand{\bTij}{\mathbb{T}^{\imath\jmath}_{\nn,d}}
\newcommand{\bTji}{\mathbb{T}^{\jmath\imath}_{\nn,d}}
\newcommand{\bTii}{\mathbb{T}^{\imath\imath}_{\eta,d}}
\newcommand{\etil}{\widetilde{e}}
\newcommand{\ftil}{\widetilde{f}}
\newcommand{\bA}[1]{    \begin{align}  #1  \end{align}}
\newcommand{\bAn}[1]{    \begin{align*}  #1  \end{align*}}
\newcommand{\bc}[1]{     \begin{cases}   #1   \end{cases}}
\newcommand{\bD}{\mbf{D}}
\newcommand{\bE}{\mbf E}
\newcommand{\be}{\mbf e}
\newcommand{\bF}{\mbf F}
\newcommand{\bbf}{\mbf f}
\newcommand{\bH}{\mbf H}
\newcommand{\bt}{\mbf t}
\newcommand{\bh}{\mbf h}
\newcommand{\bK}{\mbf K}
\newcommand{\bk}{\mbf k}
\newcommand{\Bp}[1]{\Big{(} #1\Big{)}}
\newcommand{\cR}{\mathbb{F}}
\newcommand{\D}{\mathscr{D}} 
\newcommand{\End}{\textup{End}}
\newcommand{\ep}{\epsilon}
\newcommand{\fc}{\mathfrak{c}}
\newcommand{\fgl}{\mathfrak{gl}}
\newcommand{\fsl}{\mathfrak{sl}}
\newcommand{\HH}{\mathbb{H}}
\newcommand{\Hom}{\textup{Hom}}
\newcommand{\inv}{^{-1}}
\newcommand{\jjw}{\mbox{$\jmath$\kern-1.3pt$\jmath$}}
\newcommand{\jiw}{\mbox{$\jmath$\kern-0.8pt$\imath$}}
\newcommand{\ijw}{\mbox{$\imath$\kern-1.8pt$\jmath$}}
\newcommand{\iiw}{\mbox{$\imath$\kern-1.0pt$\imath$}}
\newcommand{\nn}{\mathfrak{n}}
\newcommand{\ld}{\lambda}
\newcommand{\Ld}{\Lambda}
\newcommand{\mbf}{\mathbf}
\newcommand{\NN}{\mathbb{N}}
\newcommand{\otw}{\textup{otherwise}}
\newcommand{\QQ}{\mathbb{Q}}
\newcommand{\tif}{\textup{if }}
\newcommand{\UU}{\mathbb{U}}
\newcommand{\UUA}{\UU(\^{\fsl}_n)}
\newcommand{\UUAg}{\UU(\^{\fgl}_n)}
\newcommand{\UUC}{\mathbb{U}^{\mathfrak{c}}(\mathfrak{\widehat{sl}}_n)}
\newcommand{\UUCg}{\mathbb{U}^{\mathfrak{c}}(\mathfrak{\widehat{gl}}_n)}
\newcommand{\UUAij}{\UU(\^{\fsl}_{\nn})}
\newcommand{\UUAijg}{\UU(\^{\fgl}_{\nn})}
\newcommand{\UUAji}{\UU('\^{\fsl}_{\nn})}
\newcommand{\UUAjig}{\UU('\^{\fgl}_{\nn})}
\newcommand{\UUAii}{\UU(\^{\fsl}_{\eta})}
\newcommand{\UUAiig}{\UU(\^{\fgl}_{\eta})}
\newcommand{\UUij}{\mathbb{U}^{\imath\jmath}(\mathfrak{\widehat{sl}}_{\nn})}
\newcommand{\UUji}{\mathbb{U}^{\jmath\imath}(\mathfrak{\widehat{sl}}_{\nn})}
\newcommand{\UUii}{\mathbb{U}^{\imath\imath}(\mathfrak{\widehat{sl}}_{\eta})}
\newcommand{\VV}{\mathbb{V}}
\newcommand{\W}{W}
\newcommand{\ZZ}{\mathbb{Z}}
\renewcommand{\^}[1]{\widehat{#1}}
\renewcommand{\=}[1]{\overline{#1}}
\newcommand{\ji}{\jmath \imath}
\newcommand{\eji}{{\mbf e}}
\newcommand{\fji}{{\mbf f}}
\newcommand{\kji}{{\mbf k}}
\newcommand{\tji}{{\mbf t}}
\newcommand{\ii}{\imath \imath}
\theoremstyle{definition}
\newtheorem{Def}{Definition}[section] 
\newtheorem{rem}[Def]{Remark}
\theoremstyle{plain}
\newtheorem{prop}[Def]{Proposition}
\newtheorem{thm}[Def]{Theorem}
\newtheorem{lem}[Def]{Lemma}
\newtheorem{cor}[Def]{Corollary}
\newtheorem{theorem}[Def]{Theorem}
\newtheorem{lemma}[Def]{Lemma}
\title[Schur duality with three parameters]{Quantum Schur duality of affine type C with three parameters}
\author[Fan, Lai, Li, Luo, Wang and Watanabe]{Z. Fan,  C. Lai, Y. Li,  L. Luo, W. Wang and H. watanabe}
\address{School of science, Harbin Engineering University, Harbin 150001, China}
    \email{fanz@math.ksu.edu  (Fan)}
\address{
Department of Mathematics, University of Georgia, Athens, GA 30605}
\email{cjlai@uga.edu (Lai)}
\address{Department of Mathematics, University at Buffalo, The State University of New York,  Buffalo, NY 14260}
    \email{yiqiang@buffalo.edu (Li)}
\address{
    Department of mathematics, Shanghai Key Laboratory of Pure Mathematics and Mathematical Practice, East China Normal University, Shanghai 200241, China}
\email{lluo@math.ecnu.edu.cn (Luo)}
\address{Department of Mathematics, University of Virginia, Charlottesville, VA 22904}
\email{ww9c@virginia.edu (Wang)}
\address{Department of Mathematics, Tokyo Institute of Technology, 2-12-1 Oh-okayama, Meguro-ku, Tokyo 152-8550, Japan}
    \email{watanabe.h.at@m.titech.ac.jp (Watanabe)}
\keywords{}
\subjclass{}
\begin{document}

\begin{abstract}
We establish a three-parameter Schur duality between the affine Hecke algebra of type C and a coideal subalgebra of quantum affine $\mathfrak{sl}_n$. At the equal parameter specializations, we obtain Schur dualities of types BCD.
\end{abstract}
\maketitle

\setcounter{tocdepth}{1}
\tableofcontents

\section{Introduction}

The classic Schur duality exhibits the fundamental interactions between representation theories of general linear Lie algebras and symmetric groups. A quantized Schur duality was obtained by Jimbo \cite{Jim86} between quantum groups and Hecke algebras of type A. There has also been various versions of affine type A Schur duality; cf. \cite{CP94, Gr99}.

In developing a Kazhdan-Lusztig theory of (super) type BCD, Bao and Wang  \cite{BW18} were led to a Schur type duality between Hecke algebra of type B and an $\imath$quantum group which is a coideal subalgebra of a quantum group of type A. There has been further development of such dualities which involve Hecke algebra of type B of unequal or two parameters; see  \cite{Bao17, BWW18}. We recall a coideal subalgebra $\mathbb{U}^\imath$ of a quantum group $\mathbb{U}$ together form a quantum symmetric pair $(\mathbb{U}, \mathbb{U}^\imath)$; see \cite{Le99, Ko14}.

The goal of this paper is to formulate and establish a Schur $(\UUC, \bbH)$-duality on $\mathbb{V}^{\otimes d}$, for $n\ge 2d+2$ (and three additional variants). Here $\mathbb{V}$ is an infinite-dimensional vector space with a basis parametrized by $\ZZ$, $\bbH$ denotes the Hecke algebra of affine type $C_d$ in three parameters, and $\UUC$ is  an affine $\imath$quantum group which is a coideal subalgebra of the affine type A quantum group $\UUA$. The actions of $\UUC$ and $\bbH$ on $\mathbb{V}^{\otimes d}$ are given by explicit formulas.

 It is well known (cf. \cite{Lu89, Ka09, VV11}) there is a 3-parameter Hecke algebra $\bbH$ of affine type C over $\QQ(q,q_0,q_1)$ which specializes to all  kinds of Hecke algebras of classical affine types.
Remarkably, in the general theory of quantum symmetric pairs  \cite{Le99, Ko14}, a coideal subalgebra of the quantum groups of affine type A allows different parameters. In our setting suitable choices of the parameters in the coideal subalgebras correspond to the 3 parameters of Hecke algebras of affine type C.

A geometric approach and a Hecke algebraic approach were systematically developed in \cite{FL3Wa, FL3Wb} (also see \cite{BKLW}) toward the realizations of coideal subalgebras of quantum groups of affine type A and constructions of their canonical bases. A Schur duality involving affine Hecke algebra of type C (of single parameter) was implicit in these papers and could be developed in those frameworks naturally. It is conceivable that there will be other type of Schur dualities (of single parameter) if one starts with different types of affine flag varieties or affine Hecke algebras, and it would take considerable work to set this up. Upon single parameter specializations, the 3-parameter duality here immediately leads to several dualities involving Hecke algebras of different affine types, which are expected to arise from geometric constructions using different types of flag varieties. In this way, the 3-parameter Schur duality in this paper could serve as a helpful guideline on the geometric and categorical realizations of various equal parameter Schur dualities of different types in the future.

In a very interesting work \cite{CGM14}, Chen, Guay and Ma considered a duality which is reminiscent to our but different in several aspects. They considered 2-parameter (instead of 3-parameter here) affine Hecke algebras, and their formulation uses {\em finite-dimensional} tensor representations. The coideal algebra therein used a different definition via reflection equations, and it is not known (though is expected) if it is isomorphic to some suitable specialization of the one used in this paper. It is interesting and should be possible to adapt our work to study finite-dimensional representations of the multi-parameter coideal algebras as well.

The paper is organized as follows. In Section~\ref{sec:aqa}, we define an infinite-dimensional tensor module $\mathbb{V}^{\otimes d}$ for the affine Hecke algebra $\bbH$.
The Schur $(\UUC, \bbH)$-duality is established in Section~\ref{sec:Schur}.
In Section~\ref{sec:var},  inspired by the considerations in \cite{FL3Wa}--\cite{FL3Wb}, we establish three additional variants of Schur duality: the $(\UUji, \bbH)$-duality, the $(\UUij, \bbH)$-duality, and the $(\UUii, \bbH)$-duality. Here $\UUji, \UUij, \UUii$ denote different coideal subalgebras in $\bbU(\widehat{\mathfrak{sl}}_{n-1}), \bbU(\widehat{\mathfrak{sl}}_{n-1}), \bbU(\widehat{\mathfrak{sl}}_{n-2})$, respectively.

\vspace{3mm}
\noindent {\bf Acknowledgements.}
We thank Huanchen Bao for his work and idea which inspired the multiparameter Schur duality here.
We thank East China Normal University and University of Virginia whose support and hospitality help to facilitate the work of this project.
Z.~Fan is partially supported by the NSF of China grant 11671108, the NSF of Heilongjiang Province grant LC2017001 and the Fundamental Research Funds for the central universities GK2110260131. 
L. Luo is supported by Science and Technology Commission of Shanghai Municipality grant 18dz2271000 and the NSF of China grant 11871214. W. Wang is partially supported by the NSF grant DMS-1702254. H. Watanabe is supported by JSPS KAKENHI Grant Number 17J00172. We thank the referee for a careful reading and helpful comments.

\section{Quantum algebras and Hecke algebras}
  \label{sec:aqa}

In this section we give a quick review on the quantum group of affine type A, its coideal subalgebra $\UUC$, and the Hecke algebra $\bbH$ of affine type C. We formulate the actions of $\UUC$ and $\bbH$ on the tensor space $\mathbb{V}^{\otimes d}$.

\subsection{Quantum group of affine type A}
\label{Quantum}

Let $q, q_0, q_1$ be indeterminates, and denote by $\cR$ the field
$$\cR = \QQ(q, q_0, q_1).$$
The quantum affine $\fgl_n$  is the associative algebra $\UUAg$ over $\cR$
generated by
\[
\bE_i, \bF_i
\quad
(0 \leq i \leq n-1)
\quad
\bD_a^{\pm1}
\quad
(0 \leq a \leq n-1)
\]
subject to the following relations for $0 \leq a,b \leq n-1$ and for $0 \leq i,j\leq n-1$:
\enu
\item $q$-Cartan relations:
\begin{align*}
&\bD_a\bD_b=\bD_b\bD_a,
\quad
\bD_a\bD_a\inv=1 = \bD_a\inv\bD_a,
\\
&\bD_a\bE_j\bD_a\inv=q^{\delta_{aj}-\delta_{a-1,j}}\bE_j,
\quad
\bD_a\bF_j\bD_a\inv=q^{-\delta_{aj}+\delta_{a-1,j}}\bF_j,
\\
&\bE_i\bF_j-\bF_j\bE_i
=\delta_{ij}\frac{\bK_i-\bK_i\inv}{q-q^{-1}}.
\end{align*}
(Here and below $\bK_i := \bD_{i}\bD_{i+1}\inv$ and $\bD_n=\bD_0$.)

\item $q$-Serre relations:
\begin{align*}
&\bE_i^2\bE_j+\bE_j\bE_i^2=(q+q^{-1})\bE_i\bE_j\bE_i,& \tif |i-j| \equiv 1,
\\
&\bF_i^2\bF_j+\bF_j\bF_i^2=(q+q^{-1})\bF_i\bF_j\bF_i,& \tif |i-j| \equiv 1,
\\
&\bE_i\bE_j=\bE_j\bE_i,
\quad
\bF_i\bF_j=\bF_j\bF_i,& \tif i\not\equiv j\pm1,
\end{align*}
\endenu
where $i\equiv j$ means $i\equiv j \pmod n$.
The quantum affine $\fsl_n$  is the $\cR$-subalgebra $\UUA$ of $\UUAg$ generated by $\bE_i, \bF_i, \bK_i^{\pm 1}$
$(0 \leq i  \leq n-1)$.

\begin{rem}
The algebra $\UUAg$ does not contain a ``Heisenberg subalgebra'' and it differs from $\UUA$ only on the finite Cartan subalgebra; it plays only an auxiliary role as it allows for simpler formulas. The algebra $\UUA$ has level 0 and is sometimes called the quantum loop algebra of $\mathfrak{sl}_n$.
\end{rem}

The comultiplication $\Delta$ on $\UUAg$ is given as follows:
\begin{equation*}
\Delta(\bE_i) = \bE_i \otimes \bK_i\inv + 1 \otimes \bE_i,
\quad
\Delta(\bF_i) = \bF_i \otimes 1 + \bK_i \otimes \bF_i,
\quad
\Delta(\bD_a) = \bD_a \otimes \bD_a.
\end{equation*}
Let $\mathbb{V}$ be the $\cR$-vector space with basis $\{v_j \mid j\in\mathbb{Z}\}$.
It has a natural module structure over $\UUAg$ (and hence over $\UUA$) as follows:
\eq\label{actionU}
\bE_{i}v_{j+1}=
\bc{v_{j}&\tif j\equiv i;
\\
0&\text{else},}
\quad
\bF_{i}v_{j}=\bc{
v_{j+1} &\tif j\equiv i;
\\
0 &\text{else},
}
\quad
\bD_{a}v_{j}=
\bc{q v_{j} &\tif j\equiv a;
\\
v_{j} &\text{else}.
}
\endeq


%
%
\subsection{An $\imath$quantum group}

From now on we take an integer $r\ge 1$, and let
\[
n=2r+2.
\]
Let $\UUCg$ be the associative algebra over $\cR$
generated by
\[
\be_i, \bbf_i\quad (0 \leq i \leq r),
\quad\bh_a^{\pm1}\quad (0 \leq a \leq r+1),
\]
subject to the following relations (in which $\bk_i := \bh_i \bh_{i+1}\inv$) for $0 \leq a,b \leq r+1, 0\leq i,j\leq r$:
\enu
\item $q$-Cartan relations:
\begin{align*}
&\bh_a\bh_b=\bh_b\bh_a,
\quad
\bh_a\bh_a\inv=1 = \bh_a\inv\bh_a,
\\
&\bh_a\be_j\bh_a\inv=\bc{
q^{2\delta_{0j}} \be_j &\tif a=0;
\\
q^{-2\delta_{rj}} \be_j &\tif a=r+1;
\\
q^{\delta_{aj}-\delta_{a-1,j}}\be_j &\otw,
}
\quad
\bh_a\bbf_j\bh_a\inv=\bc{
q^{-2\delta_{0j}} \bbf_j &\tif a=0;
\\
q^{2\delta_{rj}} \bbf_j &\tif a=r+1;
\\
q^{\delta_{a-1,j}-\delta_{aj}}\bbf_j &\otw,
}
\\
&\be_i\bbf_j-\bbf_j\be_i
=\delta_{i,j}\frac{\bk_{i}-\bk_{i}\inv}{q-q^{-1}}
\quad (i,j) \neq (0,0), (r,r).
\end{align*}
\item $q$-Serre relations:
\begin{align*}
&\be_i^2\be_j+\be_j\be_i^2=(q+q^{-1})\be_i\be_j\be_i,
\quad
\bbf_i^2\bbf_j+\bbf_j\bbf_i^2=(q+q^{-1})\bbf_i\bbf_j\bbf_i,
& \tif |i-j|=1,
\\
&\be_i\be_j=\be_j\be_i,
\quad
\bbf_i\bbf_j=\bbf_j\bbf_i,& \tif i\neq j\pm1,
\\
&\be_r^{2}\bbf_r + \bbf_r\be_r^{2}=(q+q^{-1})(\be_r\bbf_r\be_r-q^{2}q_0q_1^{-1}\be_r\bk_r-q^{-2}\be_r\bk_r^{-1}),
\\
&\bbf_r^{2}\be_r+\be_r\bbf_r^{2}=(q+q^{-1})(\bbf_r\be_r\bbf_r-q^{2}q_0q_1^{-1}\bk_r\bbf_r-q^{-2}\bk_r^{-1}\bbf_r),
\\
&\be_{0}^2\bbf_{0}+\bbf_{0}\be_{0}^2
=(q+q^{-1})(\be_{0}\bbf_{0}\be_{0}-q_1 q\be_{0}\bk_{0}-q_0^{-1}q^{-1}\be_{0}\bk_{0}\inv),
\\
&\bbf_{0}^2\be_{0}+\be_{0}\bbf_{0}^2
=(q+q^{-1})(\bbf_{0}\be_{0}\bbf_{0}-q_1q\bk_{0}\bbf_{0}-q_0^{-1}q^{-1}\bk_{0}\inv\bbf_{0}).
\end{align*}
\endenu
Let $\mathbb{U}^{\mathfrak{c}}(\mathfrak{\widehat{sl}}_n)$ be the subalgebra of $\mathbb{U}^{\mathfrak{c}}(\mathfrak{\widehat{gl}}_n)$ generated by $\be_i, \bbf_i, \bk_i^\pm (0 \leq i \leq r)$. Sometimes, $\mathbb{U}^{\mathfrak{c}}(\mathfrak{\widehat{sl}}_n)$ and $\mathbb{U}^{\mathfrak{c}}(\mathfrak{\widehat{gl}}_n)$ are called $\imath$quantum groups.

We adopt the following identification for all $i\in\ZZ$:
\[
\bE_i = \bE_{i+n},
\quad
\bF_i = \bF_{i+n},
\quad
\bD_i = \bD_{i+n},
\quad
\bK_i = \bK_{i+n}.
\]

\begin{prop}
 \label{prop:homjj}
There are injective $\cR$-algebra homomorphisms
$\jjw: \UUCg \rightarrow \UUAg$ and $\jjw: \UUC\rightarrow \UUA$ defined by
\bA{
\label{embeding1}
&\bh_a \mapsto \bD_a\bD_{-a}, \quad (0\leq a \leq r+1)
\\
\label{embeding2}
&\be_i\mapsto \bE_i+\bF_{-i-1}\bK_i^{-1},
\quad
\bbf_i\mapsto \bE_{-i-1} + \bF_i\bK_{-i-1}^{-1},
\quad (1\leq i \leq r-1)
\\
\label{embeding3}
&\be_0\mapsto\bE_0+q_0^{-1}\bF_{-1}\bK_{0}^{-1},
\quad
\bbf_0\mapsto \bE_{-1} + q_1q^{-1}\bF_0\bK_{-1}^{-1}.
\\
\label{embeding4}
&\be_r\mapsto \bE_r+q^{-1}\bF_{-r-1}\bK_r^{-1},
\quad
\bbf_r\mapsto \bE_{-r-1} + q_0q_1^{-1}\bF_r\bK_{-r-1}^{-1},
}
\end{prop}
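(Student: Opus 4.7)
The plan is to verify that the proposed assignment respects each defining relation of $\UUCg$, thereby giving a well-defined algebra homomorphism, and then to establish injectivity via a filtration argument. The relations split naturally into four groups: (i) the $q$-Cartan relations involving $\bh_a$; (ii) the commutator relations $[\be_i,\bbf_j]$ for $(i,j)\ne(0,0),(r,r)$; (iii) the standard $q$-Serre relations at non-boundary indices; and (iv) the boundary-modified $q$-Serre relations at $i=0,r$ carrying the parameters $q_0,q_1$.

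For (i), I would expand each image as a two-term sum and conjugate by $\bh_a=\bD_a\bD_{-a}$, collecting scalar factors from the Cartan relations of $\UUAg$. The only subtle cases are $a=0$ and $a=r+1$: since $n=2r+2$ forces $\bD_{-0}=\bD_0$ and $\bD_{-(r+1)}=\bD_{r+1}$, both $\bD$-factors contribute the same scalar to $\be_0,\bbf_0,\be_r,\bbf_r$, producing the doubled exponent $q^{\pm 2}$ on the right-hand side. For (ii) and (iii), one expands the images using \eqref{embeding2} and applies the relations of $\UUAg$; the required cancellations depend on the fact that the index pairs $\{i,-i-1\}$ and $\{j,-j-1\}$ modulo $n$ avoid collision or adjacency under the stated constraints, so that cross $\bE$-$\bE$, $\bF$-$\bF$, and $\bE$-$\bF$ contributions either vanish or collect into the required $\delta_{ij}(\bk_i-\bk_i^{-1})/(q-q^{-1})$.

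The main obstacle will be group (iv). At $i=r$, expanding
\bAn{
\be_r^2\bbf_r+\bbf_r\be_r^2-(q+q^{-1})\be_r\bbf_r\be_r
}
via \eqref{embeding4} produces a sum of monomials in $\bE_r,\bE_{-r-1},\bF_r,\bF_{-r-1}$ and Cartan elements, with coefficients that are rational functions of $q,q_0,q_1$. All contributions not proportional to $\be_r\bk_r^{\pm 1}$ must cancel identically via the $\UUAg$ Serre and Cartan relations---most critically the quantum Serre relation between $\bE_r$ and $\bE_{-r-1}$, whose indices are adjacent modulo $n=2r+2$. After careful bookkeeping of the $q_0/q_1$ asymmetry built into \eqref{embeding4}, the residual terms should collapse to $-(q+q^{-1})(q^2q_0q_1^{-1}\be_r\bk_r+q^{-2}\be_r\bk_r^{-1})$, matching the asserted identity. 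The three remaining modified Serre relations (the $\bbf$-sided one at $i=r$ and the two at $i=0$) follow by the same calculation with $\be\leftrightarrow\bbf$ and/or $r\leftrightarrow 0$ substitutions, the $q_0/q_1$ twist entering through the corresponding embedding formulas \eqref{embeding3}--\eqref{embeding4}.

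For injectivity, I would filter $\UUAg$ by total $\bF$-degree: under \eqref{embeding2}--\eqref{embeding4}, $\be_i$ has leading term $\bE_i$ and $\bbf_i$ has leading term $\bE_{-i-1}$. A Kolb-type triangular decomposition of $\UUCg$, with a PBW basis of ordered monomials in $\be_i,\bbf_i$ and Cartan monomials in $\bh_a^{\pm 1}$, then maps to elements whose leading parts form a linearly independent ordered monomial set in $\UUAg$ by the PBW theorem for quantum affine $\fgl_n$. Hence $\jjw:\UUCg\rw\UUAg$ is injective. The restriction $\jjw:\UUC\rw\UUA$ is well-defined because $\bk_i=\bh_i\bh_{i+1}^{-1}$ maps to $\bK_i\bK_{-i-1}^{-1}\in\UUA$, and its injectivity is inherited from the extension.
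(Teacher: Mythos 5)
The relation--verification half of your plan is a genuinely different route from the paper, which performs no computation at all: it observes that the right-hand sides of \eqref{embeding1}--\eqref{embeding4} are precisely the generators of a quantum symmetric pair coideal subalgebra attached to the type $A^{(1)}_{2r+1}$ Dynkin diagram with the involution of Figure~\ref{figure:jj}, and then quotes Kolb's Theorem~7.8, which delivers the presentation (hence well-definedness of $\jjw$) \emph{and} the injectivity in one stroke. Your groups (i)--(iii) are fine in outline, and your group (iv) is the right target, though at the level of detail given the crucial cancellations are asserted (``should collapse'') rather than performed; in effect you are proposing to reprove the relevant special case of Kolb's theorem by hand, which is legitimate but is exactly the work the citation is designed to avoid.

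The genuine gap is in your injectivity argument, in three respects. First, the leading-term bookkeeping is inconsistent: with respect to total $\bF$-degree, the top-degree components of $\jjw(\be_i)$ and $\jjw(\bbf_i)$ are $\bF_{-i-1}\bK_i^{-1}$ and $\bF_i\bK_{-i-1}^{-1}$, not $\bE_i$ and $\bE_{-i-1}$ (those are the degree-zero parts, and projecting to them is not an algebra map one can use naively). Second, invoking ``a Kolb-type triangular decomposition of $\UUCg$ with a PBW basis'' is circular: for the \emph{abstractly presented} algebra $\UUCg$, the existence of such a basis is essentially equivalent to the injectivity statement being proven --- it is the content of Kolb's Theorem~7.8; from the defining relations alone you may only extract a \emph{spanning} set of straightened monomials. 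Third, even granting the leading terms, ordered monomials in the Chevalley generators $\bF_j$ of $\UUAg$ are \emph{not} linearly independent --- they satisfy the $q$-Serre relations --- so ``a linearly independent ordered monomial set \ldots by the PBW theorem'' fails as stated. The standard repair is the Letzter--Kolb associated-graded argument: since the extra $q_0,q_1$-terms in the modified Serre relations have strictly lower filtration degree, $\mathrm{gr}\,\UUCg$ is a quotient of a half quantum group tensored with the Laurent Cartan part, while $\mathrm{gr}$ of the image is generated by the leading terms $\bF_{-i-1}\bK_i^{-1}$, $\bF_i\bK_{-i-1}^{-1}$, whose $\bF$-indices $\{-i-1 \bmod n\}$ and $\{i\}$ for $0\leq i\leq r$ are disjoint and exhaust all residues, so it is that same half quantum group (times Cartan) and the surjection $\mathrm{gr}\,\UUCg \twoheadrightarrow \mathrm{gr}\,\jjw(\UUCg)$ is forced to be an isomorphism. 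Your sketch is fixable along these lines, but as written the independence step does not go through.
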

It follows that $\bk_a \mapsto \bK_a\bK_{-a-1}^{-1},  \quad (0\leq a \leq r)$.
It turns out $(\UUAg, \UUCg)$ forms a quantum symmetric pair \`a la Letzter and Kolb.

\proof
Noting that the subalgebra of $\UUAg$ generated by the right-hand sides of \eqref{embeding1}-\eqref{embeding4} is a quantum symmetric pair coideal subalgebra (in the sense of \cite{Ko14}) associated with the affine Dynkin diagram and involution below, the proposition follows from {\cite[Theorem~ 7.8]{Ko14}}.
\begin{figure}[ht!]
\caption{Dynkin diagram of type $A^{(1)}_{2r+1}$ with involution of type $\jmath\jmath \equiv \fc$.}
 \label{figure:jj}
\[
\begin{tikzpicture}
\matrix [column sep={0.6cm}, row sep={0.5 cm,between origins}, nodes={draw = none,  inner sep = 3pt}]
{
	\node(U1) [draw, circle, fill=white, scale=0.6, label = 0] {};
	&\node(U2)[draw, circle, fill=white, scale=0.6, label =1] {};
	&\node(U3) {$\cdots$};
	&\node(U4)[draw, circle, fill=white, scale=0.6, label =$r-1$] {};
	&\node(U5)[draw, circle, fill=white, scale=0.6, label =$r$] {};
\\
	&&&&&
\\
	\node(L1) [draw, circle, fill=white, scale=0.6, label =below:$2r+1$] {};
	&\node(L2)[draw, circle, fill=white, scale=0.6, label =below:$2r$] {};
	&\node(L3) {$\cdots$};
	&\node(L4)[draw, circle, fill=white, scale=0.6, label =below:$r+2$] {};
	&\node(L5)[draw, circle, fill=white, scale=0.6, label =below:$r+1$] {};
\\
};
\begin{scope}
\draw (L1) -- node  {} (U1);
\draw (U1) -- node  {} (U2);
\draw (U2) -- node  {} (U3);
\draw (U3) -- node  {} (U4);
\draw (U4) -- node  {} (U5);
\draw (U5) -- node  {} (L5);
\draw (L1) -- node  {} (L2);
\draw (L2) -- node  {} (L3);
\draw (L3) -- node  {} (L4);
\draw (L4) -- node  {} (L5);
\draw (L1) edge [color = blue,<->, bend right, shorten >=4pt, shorten <=4pt] node  {} (U1);
\draw (L2) edge [color = blue,<->, bend right, shorten >=4pt, shorten <=4pt] node  {} (U2);
\draw (L4) edge [color = blue,<->, bend left, shorten >=4pt, shorten <=4pt] node  {} (U4);
\draw (L5) edge [color = blue,<->, bend left, shorten >=4pt, shorten <=4pt] node  {} (U5);
\end{scope}
\end{tikzpicture}
\]
\end{figure}
\endproof

Combining \eqref{actionU} and \eqref{embeding1}--\eqref{embeding4}, we obtain an explicit description of the $\mathbb{U}^{\mathfrak{c}}(\mathfrak{\widehat{gl}}_n)$-action on $\VV$ as below.
\begin{lemma}\label{UconV}
The vector space $\VV$ admits a $\mathbb{U}^{\mathfrak{c}}(\mathfrak{\widehat{gl}}_n)$-action as below. For $0 \leq a \leq r+1$ and for $i\neq 0,r$,
\bA{
&\bh_a(v_j)  =\bc{
q^2v_j &\tif a = 0,r+1; a\equiv j;
\\
qv_j &\tif a\neq 0,r+1; \pm a\equiv j;
\\
v_j &\otw,
}
\\
&\be_i(v_j)  =\bc{
v_{j-1}&\tif j\equiv i+1;
\\
v_{j+1}&\tif -j\equiv i+1;
\\
0 & \otw,
}
\quad
\bbf_i (v_j)  =\bc{
v_{j+1}&\tif j\equiv i;
\\
v_{j-1}&\tif -j\equiv i ;
\\
0 &\otw;
}
\\
&\be_{0}(v_j)  =\bc{
v_{j-1} &\tif  j \equiv1;
\\
q_0^{-1}v_{j+1} &\tif j \equiv -1;
\\
0 &\otw;}
\quad
\bbf_{0}(v_j) =\bc{
q_1v_{j+1}+v_{j-1}&\tif j\equiv 0;
\\
0 &\otw;
}
\\
&\be_{r}(v_j) =
\bc{
v_{j-1}+v_{j+1}&\tif j\equiv r+1;
\\
0 &\otw;
}
\quad
\bbf_{r}(v_j) =\bc{
q_0q_1^{-1}v_{j+1}&\tif j\equiv r;
\\
v_{j-1}&\tif j\equiv r+2;
\\
0 &\otw.
}
}
\end{lemma}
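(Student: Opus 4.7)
The plan is to compose the embedding $\jjw:\UUCg\hookrightarrow\UUAg$ of Proposition~\ref{prop:homjj} with the $\UUAg$-action \eqref{actionU} and read off the formula for each generator on each basis vector $v_j$.

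For the Cartan part $\bh_a\mapsto\bD_a\bD_{-a}$, I would observe that modulo $n=2r+2$ one has $a\equiv -a$ precisely when $a\in\{0,r+1\}$; in those two cases each $\bD$-factor can contribute a $q$, producing the doubling to $q^2$, while otherwise the two factors act on distinct residues so at most one $q$ appears, giving the stated piecewise formula.

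For the generic simple generators $\be_i,\bbf_i$ with $1\le i\le r-1$, I would apply $\bE_i+\bF_{-i-1}\bK_i\inv$ to $v_j$. The first summand fires only when $j\equiv i+1$, the second only when $j\equiv -i-1$, and for such $j$ the factor $\bK_i\inv$ acts as the identity, because $j\equiv -i-1$ coincides with neither $i$ nor $i+1$ modulo $n$ (this would force $n\mid 2i+1$ or $n\mid 2i+2$, both impossible in the given range). Hence the two terms contribute $v_{j-1}$ and $v_{j+1}$ in disjoint weight classes, matching the lemma; the analysis of $\bbf_i$ is symmetric.

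Finally, for the boundary generators $\be_0,\bbf_0,\be_r,\bbf_r$ the same recipe works once I carry along the scalar prefactors $q_0\inv,q_1q\inv,q\inv,q_0q_1\inv$ from \eqref{embeding3}--\eqref{embeding4}. The only new feature is that at the fixed nodes $0$ and $r+1$ both summands of $\bbf_0$ (resp.\ $\be_r$) act on the \emph{same} weight class $\{j\equiv 0\}$ (resp.\ $\{j\equiv r+1\}$); a short check gives $\bK_{-1}\inv v_j=qv_j$ on that class (resp.\ $\bK_r\inv v_j=qv_j$), and the compensating $q\inv$ in the prefactor cleans up to the coefficient $q_1$ (resp.\ $1$) recorded in the lemma. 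The main ``obstacle'' is purely bookkeeping: tracking residues mod $n$ and evaluating the $\bK$-factors correctly on each weight class; no conceptual difficulty is expected.
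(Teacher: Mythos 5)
Your proposal is correct and is essentially the paper's own proof: the paper derives Lemma~\ref{UconV} exactly by combining the module formulas \eqref{actionU} with the embedding \eqref{embeding1}--\eqref{embeding4}, which is your composition-and-bookkeeping argument. Your residue checks (that $\bK_i^{-1}$ acts trivially on the relevant weight class for $1\le i\le r-1$ since $n\nmid 2i+1$ and $n\nmid 2i+2$, and that at the fixed nodes $\bK_{-1}^{-1}$ and $\bK_r^{-1}$ each contribute a factor $q$ absorbing the $q^{-1}$ in the prefactors of $\bbf_0$ and $\be_r$) are all accurate.
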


\subsection{Affine Hecke algebra in 3 parameters}

Let $W$ be the Weyl group of affine type $C_d$  
generated by $S = \{s_0, s_1, \ldots, s_d\}$ with the affine Dynkin diagram
\[
\begin{tikzpicture}[start chain]
\dnode{$0$}
\dnodenj{$1$}
\dydots
\dnode{$d-1$}
\dnodenj{$d$}
\path (chain-1) -- node[anchor=mid] {\(\Longrightarrow\)} (chain-2);
\path (chain-4) -- node[anchor=mid] {\(\Longleftarrow\)} (chain-5);
\end{tikzpicture}
\]

Recall that $\VV$ is the natural representation of $\UUA$ with $\cR$-basis $\{v_i ~|~ i \in \ZZ\}$.
The tensor space $\VV^{\otimes d}$ then has an $\cR$-basis $\{M_f~|~ f\in \ZZ^d\}$, where
\begin{equation*}
M_f=v_{f_1}\otimes\cdots\otimes v_{f_d}\in \mathbb{V}^{\otimes d}
\quad
\textup{for}
\quad
f = (f_1, \ldots, f_d)\in \ZZ^d.
\end{equation*}
The group $W$ admits a natural right action on  $\ZZ^d$. Precisely, for $f = (f_1, \ldots, f_d) \in \ZZ^d$, we have
\begin{equation}
\label{eq:Wonf}
f\cdot s_i=\bc{
(f_1,\ldots,f_{i-1}, f_{i+1},f_i, f_{i+2},\ldots, f_d) & \tif i\neq 0,d;
\\
(-f_1,f_2,\ldots,f_d) &\tif i=0;
\\
(f_1,f_2,\ldots,f_{d-1},n-f_d) &\tif i=d.
}
\end{equation}

Let $\HH$ be the affine Hecke algebra of type $C_d$ with three parameters, that is, $\HH$ is an $\cR$-algebra generated by
\[
T_i
\quad
(0 \leq i \leq d-1),
\quad
X_a^{\pm1}
\quad
(1 \leq a \leq d),
\]
subject to the following relations, for $1\leq a,b \leq d$ and for $0\leq i,j,k \leq d-1$,
\enu
\item Toric relations:
\begin{equation*}
X_aX_a\inv = 1 = X_a\inv X_a,
\quad
X_aX_b = X_b X_a.
\end{equation*}
\item Hecke relations:
\begin{align}
 \label{eq:Hrel}
 \begin{split}
 &(T_0-q_0^{-1})(T_0+q_1)=0,
 \quad
(T_i-q^{-1})(T_i+q)=0
\quad
(i\neq0),
\\
&T_kT_{k-1}T_k=T_{k-1}T_kT_{k-1}
\quad(k\neq 0,1),
\\
&(T_0T_1)^2=(T_1T_0)^2,
\quad
T_iT_j=T_jT_i
\quad(|i-j|>1).
 \end{split}
\end{align}
\item Bernstein-Lusztig relations:
\begin{align}
 \label{T0X1}
&T_0X_1^{-1}T_0=q_0^{-1}q_1X_1+(q_0^{-1}q_1-1)T_0,\\
\label{TiXi}&T_iX_iT_i=X_{i+1} ~(i\neq0),
\quad
T_iX_j=X_jT_i ~(j\neq i,i+1).
\end{align}
\endenu

We remark that the Hecke algebra $\bbH$ of affine type $C_d$ in this paper can be matched with the version in \cite[Appendix~A]{VV11} with the following parameter correspondence: our $q \leftrightarrow \text{their } p$, our $q_0 \leftrightarrow \text{their }  q_1$, our $q_1 \leftrightarrow \text{their }  q_0$. Also see \cite{Ka09} in somewhat different notations.

The algebra $\HH$ contains a subalgebra $\HH_A$ generated by $T_1, \ldots, T_{d-1}, X_1^{\pm 1}, \ldots, X_d^{\pm 1}$, which is an affine Hecke algebra of type A.

We define $T_d \in \mathbb{H}$ by
\begin{equation}\label{eq:Td1}
T_d := q_0^{-1} X_d T_{d-1}^{-1} \cdots T_1^{-1} T_0^{-1} T_1^{-1} \cdots T_{d-1}^{-1}.
\end{equation}

\begin{lemma}
  \label{lem:braid}
The element $T_d \in \mathbb{H}$ satisfies the following relations:
\begin{enumerate}
\item $(T_d-q_1^{-1})(T_d+q_0^{-1})=0$.
\item $T_d T_i = T_i T_d$, for all $0 \leq i \leq d-2$.
\item $(T_{d-1}T_d)^2 = (T_dT_{d-1})^2$, if $d \geq 2$.
\end{enumerate}
\end{lemma}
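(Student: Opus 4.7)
The three relations are those required so that $T_d$ (defined by \eqref{eq:Td1}) serves as the Coxeter generator at the rightmost node of the affine Dynkin diagram of type $\widetilde{C}_d$, extending $\{T_0, \ldots, T_{d-1}\}$ to a Coxeter presentation of $\bbH$. This equivalence between the Bernstein--Lusztig and the Coxeter presentations is classical even in the three-parameter setting; cf.~\cite{Lu89, VV11}. We outline a direct verification from the defining relations of $\bbH$.

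Set $U := T_{d-1} T_{d-2} \cdots T_1 T_0 T_1 \cdots T_{d-1}$, so that $T_d = q_0^{-1} X_d U^{-1}$. The computational heart of the proof is the telescoping identity
\[
U^{-1} X_d U^{-1} \,=\, q_0 q_1^{-1}\, X_d^{-1} \,+\, (q_0 q_1^{-1} - 1)\, U^{-1}.
\]
To establish it, I iteratively apply $T_i^{-1} X_{i+1} T_i^{-1} = X_i$ (from \eqref{TiXi}) to push $X_d$ across one half of $U^{-1}$ until it becomes $X_1$ sitting next to $T_0^{-1}$; then apply \eqref{T0X1} in the form $T_0^{-1} X_1 = q_0 q_1^{-1} X_1^{-1} T_0 + (q_0 q_1^{-1} - 1)$; then telescope back across the other half of $U^{-1}$, where the $X_1^{-1} T_0$ summand produces $X_d^{-1} U$ and the scalar summand collapses via the cancellation $T_{d-1}^{-1} \cdots T_1^{-1} \cdot T_1 \cdots T_{d-1} = 1$.

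Given this identity, part (1) is a one-line consequence: substituting into $T_d^2 = q_0^{-2} X_d(U^{-1} X_d U^{-1})$ and using $X_d U^{-1} = q_0 T_d$ yields $T_d^2 = q_0^{-1} q_1^{-1} + (q_1^{-1} - q_0^{-1}) T_d$, which factors as $(T_d - q_1^{-1})(T_d + q_0^{-1}) = 0$. For (2), since $T_i$ commutes with $X_d$ for $i \le d-2$ by \eqref{TiXi}, it suffices to show $T_i U = U T_i$. When $i = 0$, I commute $T_0$ past $T_2, \ldots, T_{d-1}$ in $U$ and invoke the affine braid relation $(T_0 T_1)^2 = (T_1 T_0)^2$ to swap the central block $T_0 T_1 T_0 T_1$ to $T_1 T_0 T_1 T_0$. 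When $1 \le i \le d-2$, I use $T_i T_j = T_j T_i$ ($|i-j|>1$) to center $T_i$ around the $T_{i \pm 1}$ factors in $U$, then apply the braid relations $T_i T_{i\pm 1} T_i = T_{i\pm 1} T_i T_{i\pm 1}$ on both halves of $U$ to slide $T_i$ through.

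Part (3) is the main obstacle. The plan is to write $U = T_{d-1} V T_{d-1}$ with $V := T_{d-2} \cdots T_1 T_0 T_1 \cdots T_{d-2}$, noting that $V$ commutes with $X_d$ by \eqref{TiXi}, so that $T_d T_{d-1} = q_0^{-1} X_d T_{d-1}^{-1} V^{-1}$. Expanding both $(T_{d-1} T_d)^2$ and $(T_d T_{d-1})^2$ and pushing every occurrence of $X_d$ to the left using $T_{d-1}^{-1} X_d = X_{d-1} T_{d-1}$ and $T_{d-1} X_d = X_{d-1} T_{d-1} + (q^{-1}-q) X_d$ (a direct consequence of the quadratic relation for $T_{d-1}$), one reduces the desired equality to an analogue of the telescoping identity above, now applied to $V^{-1} T_{d-1}^{-1} V^{-1}$. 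This secondary telescoping is driven by the braid relation $T_{d-2} T_{d-1} T_{d-2} = T_{d-1} T_{d-2} T_{d-1}$ together with the commutations of $T_{d-1}$ with $T_0, \ldots, T_{d-3}$. The hard part will be the bookkeeping in these nested manipulations; once carried out, the final identity invokes only relations already available in $\bbH$, together with the quadratic relation obtained in (1).
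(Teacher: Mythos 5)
Your parts (1) and (2) are correct, and your part (1) is in substance the paper's own argument in different packaging. Your telescoping identity $U^{-1}X_dU^{-1}=q_0q_1^{-1}X_d^{-1}+(q_0q_1^{-1}-1)U^{-1}$ does hold: pushing $X_d$ through half of $U^{-1}$ via $T_i^{-1}X_{i+1}=X_iT_i$, rewriting \eqref{T0X1} as $T_0^{-1}X_1=q_0q_1^{-1}X_1^{-1}T_0+(q_0q_1^{-1}-1)$, and telescoping back gives $U^{-1}X_d=q_0q_1^{-1}X_d^{-1}U+(q_0q_1^{-1}-1)$, whence $T_d^2=q_0^{-1}q_1^{-1}+(q_1^{-1}-q_0^{-1})T_d$ as you claim. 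The paper runs the same computation as a conjugation: by \eqref{Td2}, $q_0T_d$ is conjugate to $X_1T_0^{-1}$, which satisfies $(X_1T_0^{-1}-q_0q_1^{-1})(X_1T_0^{-1}+1)=0$ by \eqref{T0X1}. Your braid-shuffling for (2) is also fine (the paper leaves (2) to the reader).

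Part (3), however, is a plan rather than a proof, and the step you defer is exactly the hard part. Once you write $U=T_{d-1}VT_{d-1}$ and clear all occurrences of $X_d$, $X_{d-1}$ (as the paper does when computing $(T_{d-1}^{-1}T_d^{-1})^2$ and $(T_d^{-1}T_{d-1}^{-1})^2$), what remains contains no $X$'s at all: it is the identity \eqref{eq:TT}, which, since the palindromic word there equals $V\,T_{d-1}\,V$, says precisely $(VT_{d-1})^2=(T_{d-1}V)^2$ in the subalgebra generated by $T_0,\ldots,T_{d-1}$. This cannot be ``an analogue of the telescoping identity'': the engine of your telescoping was \eqref{T0X1}, which is unavailable once the $X$'s are gone. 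Moreover, your claim that the remaining step is driven by $T_{d-2}T_{d-1}T_{d-2}=T_{d-1}T_{d-2}T_{d-1}$ together with the commutations of $T_{d-1}$ with $T_0,\ldots,T_{d-3}$ cannot be right as stated: any derivation must ultimately pass through the degenerate braid relation $(T_0T_1)^2=(T_1T_0)^2$, which your outline for (3) never invokes; indeed for $d=2$ the needed identity \emph{is} $(T_0T_1)^2=(T_1T_0)^2$, while your cited driver $T_0T_1T_0=T_1T_0T_1$ is false in $\bbH$.

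There are two ways to close the gap. The paper's route is conceptual: $s_{d-2}\cdots s_0\cdots s_{d-1}\cdots s_0\cdots s_{d-2}$ is a reduced expression of the reflection $s_\theta$ in the highest root $\theta=-\ep_{d-1}-\ep_d$ of finite type $C_d$, and $s_\theta s_{d-1}=s_{d-1}s_\theta$ with lengths adding, so both sides of \eqref{eq:TT} are products of $T_i$'s along reduced expressions of one element and agree by Matsumoto's theorem, using only the braid relations in \eqref{eq:Hrel}. Alternatively, if you insist on a computational proof in the spirit of your outline, set $V_k=T_k\cdots T_1T_0T_1\cdots T_k$ and prove $C_k:\,(V_{k-1}T_k)^2=(T_kV_{k-1})^2$ by induction on $k$: writing $V_k=T_kV_{k-1}T_k$, one braid move in the middle of each side plus the commutation of $T_{k+1}$ with $V_{k-1}$ reduces $C_{k+1}$ to $P_kT_k=T_kP_k$ with $P_k=V_{k-1}T_kV_{k-1}$, which is $C_k$; the base case $C_1$ is the defining relation $(T_0T_1)^2=(T_1T_0)^2$. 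Then $C_{d-1}$ is \eqref{eq:TT}. With either supplement your argument for (3) is complete; without one, it is not a proof.
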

\proof
These relations are verified by direct computations. Here we only present proofs for (1) and (3) while leaving the verification of (2) to the reader.

Thanks to \eqref{TiXi}, we have
\begin{align*}
  X_d T_{d-1}^{-1} \cdots T_1^{-1} T_0^{-1} = T_{d-1} \cdots T_1 (X_1 T_0^{-1}).
\end{align*}
Hence
\begin{equation}\label{Td2}
q_0T_d = T_{d-1} \cdots T_1(X_1 T_0^{-1})T_1^{-1}\cdots T_{d-1}^{-1}.
\end{equation}
It follows from \eqref{T0X1} that
$(X_1 T_0^{-1} - q_0q_1^{-1})(X_1 T_0^{-1} + 1) = 0$,
and thus $(q_0T_d - q_0q_1^{-1})(q_0T_d + 1) = 0$ by \eqref{Td2}.
Part (1) follows.

We compute
\begin{align*}
(T_{d-1}^{-1} T_d\inv)^2
&= q_0^2 T_{d-2} \ldots T_1T_0T_1 \ldots T_{d-1} X_d^{-1} T_{d-2} \ldots T_1T_0T_1 \ldots T_{d-1} X_d^{-1}
\\
&= q_0^2 T_{d-2} \ldots T_1T_0T_1 \ldots T_{d-1} T_{d-2} \ldots T_1T_0T_1 \ldots T_{d-2} T_{d-1}^{-1} X_{d-1}^{-1} X_d^{-1},
\\
(T_d\inv T_{d-1}^{-1})^2
&= q_0^2 T_{d-1} \ldots T_1T_0T_1 \ldots T_{d-1} X_d^{-1} T_{d-2} \ldots T_1T_0T_1 \ldots T_{d-1} X_d^{-1} T_{d-1}^{-1}
\\
& = q_0^2 T_{d-1} \ldots T_1T_0T_1 \ldots T_{d-1} T_{d-2} \ldots T_1T_0T_1 \ldots T_{d-2} T_{d-1}^{-1} X_{d-1}^{-1} X_d^{-1} T_{d-1}^{-1}
\\
&  = q_0^2 T_{d-1} \ldots T_1T_0T_1 \ldots T_{d-1} T_{d-2} \ldots T_1T_0T_1 \ldots T_{d-2} T_{d-1}^{-2} X_{d-1}^{-1} X_d^{-1}.
\end{align*}
Thus, to show the identity in (3) it suffices to show that
\eq\label{eq:TT}
(T_{d-2} \ldots T_0 \ldots T_{d-1}  \ldots T_0 \ldots T_{d-2}) T_{d-1}
= T_{d-1} (T_{d-2} \ldots T_0 \ldots T_{d-1}  \ldots T_0 \ldots T_{d-2}).
\endeq
Let $\alpha_0 = -2\ep_1, \alpha_{i} = \ep_i - \ep_{i+1}$ for $1\leq i\leq d-1$. The highest root in the finite type C Weyl group is $\theta = \alpha_0 + 2\alpha_1 + ... + 2\alpha_{d-2} + \alpha_{d-1} = -\ep_{d-1} - \ep_d$ and hence
\eq\label{eq:TT2}
 s_\theta s_{d-1} = s_{d-1} s_\theta.
\endeq
Therefore, \eqref{eq:TT} follows by \eqref{eq:TT2} and noting that
$s_{d-2} \ldots s_0 \ldots s_{d-1} \ldots s_0 \ldots s_{d-2}$ is a reduced expression of $s_\theta$.
\endproof

It follows by \eqref{TiXi}--\eqref{eq:Td1} that the algebra $\bbH$ is generated by $T_0, T_1, \ldots, T_d$. For any $w\in W$ with a reduced form $w=s_{i_1}\cdots s_{i_l}$, set
\begin{equation}\label{Tw}
T_w:=T_{i_1}\cdots T_{i_l}
\end{equation}
and
\begin{equation}\label{qw}
q_w:=q_{s_{i_1}}\cdots q_{s_{i_l}} \quad\mbox{where}\quad
q_{s_i} := \bc{
q_1 &\tif i = 0;
\\
q &\tif i \neq 0,d;
\\
q_0^{-1} &\tif i = d.}
\end{equation}
It follows by the braid relations in \eqref{eq:Hrel} and Lemma~\ref{lem:braid} that $T_w$ is independent of the choice of the reduced form of $w$.  Since the $q_{s_i}$'s satisfy the same braid relations, $q_w$ is uniquely determined by $w$, too.

\subsection{A tensor module for Hecke algebra}

We first recall a well-known action of the Hecke algebra $\HH_A$ of affine type A on $\VV^{\otimes d}$; see
\cite{KMS}. We introduce linear operators $z_1, \ldots, z_d$ which act on $\VV^{\otimes d}$ (from the right) as below:
\begin{equation*}
M_fz_i = M_{(f_1, \ldots, f_{i-1}, f_i + n, f_{i+1}, \ldots, f_d)}.
\end{equation*}
Since each $f_i\in\ZZ$ has a unique expression
$
f_i = \=f_i + c_i n,
$
for some $c_i\in\ZZ$ such that $-r\leq \=f_i \leq r+1$,
each basis element $M_f$ has a unique expression
\begin{equation*}
M_f= M_{\=f}Z_f,
\quad
Z_f = z_1^{c_1}\ldots z_d^{c_d}.
\end{equation*}

Recall the right $W$-action on $\ZZ^d$ in \eqref{eq:Wonf}.
Following \cite[(32)]{KMS}, the action of $\HH_A$ is given by, for $1\leq i \leq d-1$ and $1\leq a \leq d$,
\bA{
\label{Ti action}&M_fT_i=
\bc{
    M_{f\cdot s_i}
    +(q^{-1}-q)M_{\={f}}P^{(i)}_+(Z_f)
    & \tif \=f_{i+1}>\=f_i;
\\
    q^{-1}M_{\=f} Z_{f\cdot s_i}
    +(q^{-1}-q)M_{\={f}}P^{(i)}_+(Z_f)
    &\tif \=f_{i+1}=\=f_i;
\\
    M_{f\cdot s_i}
    +(q^{-1}-q)M_{\={f}}P^{(i)}_-(Z_f)
    & \tif \=f_{i+1}<\=f_i,
}
\\
\label{actionX}
&M_fX_a = M_f z_a\inv.
}
Here $P^{(i)}_\pm$ are operators given by
\eq
P^{(i)}_-(Z_f) =
\dfrac{z_{i+1}(Z_{f\cdot s_i}) - z_iZ_f}
{z_{i+1} - z_i},
\quad
P^{(i)}_+(Z_f) = \dfrac{z_i(Z_{f \cdot s_i}-Z_f)}
{z_{i+1} - z_i}.
\endeq

Now we shall enhance the action of $\HH_A$ on $\VV^{\otimes d}$ to an action of the Hecke algebra $\bbH$ of affine type C in 3 parameters.
For convenience we denote the basis elements of $\mathbb{V}$ by
\begin{equation*}
v_iz^j:=v_{i+nj}, \quad( -r\leq i\leq r+1, j\in\mathbb{Z}).
\end{equation*}
Define
\eq\label{T0 action}
M_fT_0= (v_{f_1}T_0) \otimes v_{f_2}\otimes \ldots \otimes v_{f_d},
\endeq
where $v_{f_1}T_0$ is given by
(below we assume $f_1=k+nj$, for $-r\leq k\leq r+1$):
\eq\label{T0onV}
\bc{
q_0^{-1}q_1v_{-k}z^{-j}
+(q_1-q_0^{-1})\sum_{l=1}^{j}v_kz^{j-2l}+(q_0^{-1}q_1-1)\sum_{l=1}^{j}v_kz^{j+1-2l}
&\tif k=r+1, j\geq 0;
\\
v_{-k}z^{-j}
+(q_0^{-1}-q_1)\sum_{l=1}^{-j}v_k z^{-j-2l}+(1-q_0^{-1}q_1)\sum_{l=2}^{-j}v_k z^{-j+1-2l}
&\tif  k=r+1, j<0;
\\
v_{-k}z^{-j}
+(q_1-q_0^{-1})\sum_{l=1}^{j}v_kz^{j-2l}+(q_0^{-1}q_1-1)\sum_{l=1}^{j}v_kz^{j+1-2l}
&\tif 0<k\leq r, j\geq 0;
\\
v_{-k}z^{-j}
+(q_0^{-1}-q_1)\sum_{l=1}^{-j}v_k z^{-j-2l}+(1-q_0^{-1}q_1)\sum_{l=1}^{-j}v_k z^{-j+1-2l}
&\tif  0<k\leq r, j<0;
\\
q_0^{-1}v_{0}z^{-j}
+(q_1-q_0^{-1})\sum_{l=1}^{j}v_0z^{j-2l}+(q_0^{-1}q_1-1)\sum_{l=1}^{j}v_0z^{j+1-2l}
&\tif k=0, j\geq 0;
\\
q_1 v_{0}z^{-j}
+(q_0^{-1}-q_1)
\sum_{l=0}^{-j}v_0 z^{-j-2l}+(1-q_0^{-1}q_1)\sum_{l=1}^{-j}v_0 z^{-j+1-2l}
&\tif k=0, j<0;
\\
q_0^{-1}q_1v_{-k}z^{-j}+(q_1-q_0^{-1})\sum_{l=1}^{j-1}v_kz^{j-2l}+(q_0^{-1}q_1-1)\sum_{l=1}^{j}v_kz^{j+1-2l}
&\tif -r\leq k<0, j> 0;
\\
q_0^{-1}q_1v_{-k}z^{-j}
+(q_0^{-1}-q_1)\sum_{l=0}^{-j}v_k z^{-j-2l}+(1-q_0^{-1}q_1)\sum_{l=1}^{-j}v_k z^{-j+1-2l}
&\tif -r\leq k<0, j\leq0.
}
\endeq

The formula \eqref{T0onV} above is obtained as follows. We first define the action of $T_0$ on $\{v_k|-r\leq k\leq r+1\}$ in $\mathbb{V}$, and then extend the action to all the basis vectors by the relation \eqref{T0X1}.

\begin{prop}
The formulas in \eqref{Ti action}--\eqref{T0onV} define an action of $\HH$ on $\mathbb{V}^{\otimes d}$.
\end{prop}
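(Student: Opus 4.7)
The plan is to reduce the verification of all defining relations of $\bbH$ on $\VV^{\otimes d}$ to a minimal set of computations by exploiting the locality of the operators involved.

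First, the relations entirely inside $\HH_A$ (the toric relations, the quadratic and braid relations among $T_1,\ldots,T_{d-1}$, and the Bernstein-Lusztig relations \eqref{TiXi}) are already known to hold on $\VV^{\otimes d}$ by \cite{KMS}, since \eqref{Ti action} and \eqref{actionX} reproduce their formulas. Hence the only new relations to check are the quadratic relation $(T_0-q_0^{-1})(T_0+q_1)=0$, the commutations $T_0T_i=T_iT_0$ for $i\ge 2$ and $T_0X_a=X_aT_0$ for $a\ge 2$, the Bernstein-Lusztig relation \eqref{T0X1}, and the braid relation $(T_0T_1)^2=(T_1T_0)^2$.

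Next, by \eqref{T0 action} the operator $T_0$ affects only the first tensor slot, while $T_i$ for $i\ge 2$ and $X_a$ for $a\ge 2$ act trivially on that slot. Consequently the commutations with distant generators are immediate, the quadratic relation and the relation \eqref{T0X1} both collapse to identities on the single factor $\VV$, and the remaining braid relation with $T_1$ reduces to an identity on $\VV\otimes\VV$.

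For the single-factor checks, I would verify $(v_kz^j)(T_0-q_0^{-1})(T_0+q_1)=0$ and the Bernstein-Lusztig identity for $T_0,X_1^{\pm1}$ by going through the eight cases of \eqref{T0onV}. The decisive observation is that the correction terms all share the telescoping shape $(q_1-q_0^{-1})\sum v_kz^{j-2l}+(q_0^{-1}q_1-1)\sum v_kz^{j+1-2l}$, so that applying $T_0$ a second time (which reflects $j\mapsto -j$ in the leading term and produces new telescoping sums) yields, after pairwise cancellation, precisely $(q_0^{-1}-q_1)T_0(v_kz^j)+q_0^{-1}q_1\,v_kz^j$, matching the quadratic relation; the boundary terms left over from the eight cases then match the right-hand side of \eqref{T0X1} when one also applies $X_1^{\pm 1}$, which simply shifts the $z$-exponent.

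The main obstacle is the braid relation $(T_0T_1)^2=(T_1T_0)^2$ on $\VV\otimes\VV$. I would apply both sides to a generic basis element $v_{f_1}\otimes v_{f_2}$ and compare, splitting into subcases by the relative sizes of $\bar f_1,\bar f_2$ (which case of \eqref{Ti action} governs $T_1$) and by the signs of the exponents $c_1,c_2$ (which case of \eqref{T0onV} governs $T_0$). To keep the bookkeeping tractable, I would use the compact $P^{(1)}_\pm$-form of \eqref{Ti action} and track only the incremental contribution produced at each step. Given the length of such a verification, the expected exposition is to present one generic case in full and indicate how the others reduce to it by symmetry under $f_1\leftrightarrow f_2$ and by sign reversal of $c_1,c_2$.
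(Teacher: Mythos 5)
Your proposal is correct and follows essentially the same route as the paper: both reduce the problem, via \cite{KMS} and the fact that $T_0$, $X_1$ touch only the first tensor factor (and $T_0,T_1$ only the first two), to a direct verification of the quadratic relation $(T_0-q_0^{-1})(T_0+q_1)=0$, the Bernstein--Lusztig relation \eqref{T0X1}, and the braid/commutation relations involving $T_0$, checked case by case against the eight branches of \eqref{T0onV}. The paper simply carries out (part of) this computation explicitly for the borderline cases of \eqref{T0onV} and leaves the remaining cases, including the braid relation with $T_1$, as a routine direct computation, exactly as you propose to organize it.
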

\proof
It suffices to check the Hecke relation \eqref{eq:Hrel} and the Bernstein-Lusztig relation \eqref{T0X1} for $T_0$. This follows by a direct computation, and here we only present the borderline cases in \eqref{T0onV}. It is useful to give the following formulas for the borderline cases in \eqref{T0onV}, for $1\leq k \leq r$,
\begin{align*}
&v_{r+1}T_0 = q_0^{-1}q_1v_{-r-1}
,
\\
&v_{-r-1}T_0 = v_{r+1}
+(q_0^{-1}-q_1)v_{-r-1}
,
\\
&v_k T_0 = v_{-k}
,
\\
&v_{k-n}T_0 = v_{n-k}
+(q_0^{-1}-q_1)v_{k-n}
+(1-q_0^{-1}q_1) v_k
,
\\
&v_0T_0 = q_0^{-1}v_{0}
,
\\
&v_{-n}T_0 = q_0\inv v_{n}
+(q_0^{-1}-q_1)
v_{-n}
+(1-q_0^{-1}q_1)v_0
,
\\
&v_{-k}T_0 = q_0^{-1}q_1v_{k}
+(q_0^{-1}-q_1) v_{-k}
,
\\
&v_{n-k} T_0 = q_0^{-1}q_1v_{k-n}
+(q_0^{-1}q_1-1)v_{-k}
.
\end{align*}
We start with checking  \eqref{eq:Hrel} for these cases as follows:
\begin{align*}
&v_{r+1} (T_0 - q_0\inv) (T_0 + q_1)
= q_0\inv q_1 (v_{r+1} + q_0\inv v_{-r-1}) - q_0\inv ( q_0\inv q_1 v_{-r-1}+q_1 v_{r+1})
= 0,
\\
&v_{-r-1} (T_0 - q_0\inv) (T_0 + q_1)
= q_0\inv q_1 v_{-r-1} + q_1 v_{r+1} - q_1 (  v_{r+1}+ q_0\inv  v_{-r-1})
= 0,
\\
&v_{k} (T_0 - q_0\inv) (T_0 + q_1)
 = (q_0\inv q_1 v_k + q_0\inv v_{-k})-q_0\inv( v_{-k} +q_1 v_{k}) = 0,
\\
&v_{-k} (T_0 - q_0\inv) (T_0 + q_1)
= q_0\inv q_1( v_{-k} + q_1 v_{k})-q_1( q_0\inv q_1 v_{k} +q_0\inv v_{-k}) = 0,
\\
&v_{0} (T_0 - q_0\inv) (T_0 + q_1)
= 0,
\\
&v_{-n} (T_0 - q_0\inv) (T_0 + q_1)
 = q_0\inv(q_1 v_{-n} + (q_0\inv q_1 - 1) v_0 + q_1 v_n)
 \\
 &+(1- q_0\inv q_1) (q_0\inv + q_1) v_0
 - q_1 ( q_0\inv v_{n} +(1-q_0\inv q_1) v_0 +q_0\inv v_{-n} )
 = 0,
 \\
&v_{k-n} (T_0 - q_0\inv) (T_0 + q_1)
= q_0\inv q_1 v_{k-n} + (q_0\inv q_1 -1) v_{-k}+q_1 v_{n-k}
\\
&- q_1 (v_{n-k} + q_0\inv v_{k-n} + (1-q_0\inv q_1) v_k)
+(1- q_0\inv q_1) (v_{-k} + q_1 v_k) = 0,
 \\
&v_{n-k} (T_0 - q_0\inv) (T_0 + q_1)
= q_0\inv q_1 (v_{n-k} + q_0\inv v_{k-n} + (1-q_0\inv q_1) v_k)
\\
&+(q_0\inv q_1 -1)(q_0\inv q_1 v_k + q_0\inv v_{-k})
- q_0\inv (q_0\inv q_1 v_{k-n} + (q_0\inv q_1 -1) v_{-k}+q_1 v_{n-k}) = 0.
\end{align*}
The Bernstein-Lusztig relation \eqref{T0X1} for the extremal cases follow from the following computation:
\begin{align*}
&v_{r+1}T_0 X_1\inv T_0 = (q_0^{-1}q_1)^2v_{-r-1},
\\
&v_{-r-1}T_0 X_1\inv T_0 =
q_0\inv q_1 v_{-3(r+1)}
+(q_0\inv q_1 - 1) ( v_{r+1} + (q_0\inv - q_1) v_{-r-1}),
\\
&v_k T_0 X_1\inv T_0 = q_0^{-1}q_1v_{k-n}
+(q_0^{-1}q_1-1)v_{-k},
\\
&v_{-k}T_0 X_1\inv T_0 = q_0\inv q_1 v_{-k-n}
+(q_0\inv q_1 -1)(q_0\inv q_1 v_k - (q_0^{-1} - q_1) v_{-k}),
\\
&v_0T_0 X_1\inv T_0 = q_0^{-1}(q_1 v_{-n} + (q_0\inv q_1 -1 ) v_0),
\\
&v_{-n}T_0 X_1\inv T_0
=q_0\inv q_1 v_{-2n} + (q_0\inv q_1 -1) (q_0 \inv v_n + (1-q_0\inv q_1) v_0 + (q_0\inv - q_1) v_{-n}),
\\
&v_{k-n}T_0 X_1\inv T_0 =
q_0\inv q_1 v_{k-2n} + (q_0\inv q_1 -1) (v_{n-k}+(q_0^{-1}-q_1)v_{k-n}+(1-q_0^{-1}q_1) v_k),
\\
&v_{n-k} T_0 X_1\inv T_0
= q_0^{-1}q_1v_{-k}+(q_0^{-1}q_1-1) (q_0^{-1}q_1v_{k-n}+(q_0^{-1}q_1-1)v_{-k}).
\end{align*}
The proposition is proved.
\endproof


The action of $T_i$ ($0 \leq i \leq d-1$) on the set $\{M_f~|~ 0\leq f_1\leq f_2\leq\cdots\leq f_d\leq r+1\}$ behaves nicely as below:
\bA{
\label{FD1}&M_fT_i=\bc{
q^{-1}M_f
&\tif 0\leq f_i=f_{i+1}\leq r+1,
\\
M_{f\cdot s_i}
&\tif 0\leq f_i <f_{i+1}\leq r+1,}
\quad (i\neq0)
\\
\label{FD2}&M_f T_0 = \bc{
q_0^{-1}M_{f}
& \tif f_1=0,
\\
M_{f\cdot s_0}
& \tif
0<f_1 < r+1;
\\
q_0\inv q_1 M_{f\cdot s_0}
& \tif
f_1 = r+1.
} }
Combining \eqref{FD1}--\eqref{FD2} with \eqref{actionX}, we obtain the following.

\begin{cor}\label{FD}
The tensor space $\mathbb{V}^{\otimes d}$ is generated by $\{M_f~|~0\leq f_1\leq f_2\leq\cdots\leq f_d\leq r+1\}$ as an $\mathbb{H}$-module.
\end{cor}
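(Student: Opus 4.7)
The plan is to induct on the length of a Weyl-group element to show that every basis vector $M_f$ lies in the $\mathbb{H}$-submodule $\mathcal{M}$ of $\VV^{\otimes d}$ generated by $\{M_g : g \in F\}$, where $F = \{f \in \ZZ^d : 0 \leq f_1 \leq \cdots \leq f_d \leq r+1\}$. The first observation is that $F$ is a fundamental domain for the right action \eqref{eq:Wonf} of the affine Weyl group $W$ of type $C_d$ on $\ZZ^d$. Thus any $f \in \ZZ^d$ can be written as $f = g \cdot w$ with $g \in F$ and $w \in W$, and we induct on $\ell(w)$. The base case $\ell(w) = 0$ is immediate; for the inductive step, writing $w = w' s_i$ with $\ell(w') < \ell(w)$ and setting $f' = g \cdot w'$, it suffices to prove the following Key Lemma: for every $f' \in \ZZ^d$ and every simple reflection $s_i$ with $0 \leq i \leq d$, one has $M_{f' \cdot s_i} \in \mathbb{H} \cdot M_{f'}$.

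The Key Lemma is established by exhibiting, for each $i$, a decomposition of the form
\[
M_{f'} \cdot T = c \, M_{f' \cdot s_i} + \sum_h \gamma_h M_h, \qquad c \in \cR^\times, \quad \bar h = \bar{f'} \text{ for all } h,
\]
with $T = T_i$ when $0 \leq i \leq d-1$ and $T = T_d$ (as defined by \eqref{eq:Td1}) when $i = d$. Since $\bar h = \bar{f'}$ means $h - f' \in n \ZZ^d$, each $M_h$ in the sum is obtained from $M_{f'}$ by applying a suitable monomial in $X_1^{\pm 1}, \ldots, X_d^{\pm 1}$ via \eqref{actionX}, so $M_h \in \mathbb{H} \cdot M_{f'}$; solving the displayed identity for $M_{f' \cdot s_i}$ then yields $M_{f' \cdot s_i} \in \mathbb{H} \cdot M_{f'}$. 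For $1 \leq i \leq d-1$, the decomposition is read off from \eqref{Ti action} directly, with $c \in \{1, q^{-1}\}$ and the correction terms $(q^{-1} - q) M_{\bar{f'}} P^{(i)}_\pm(Z_{f'})$ expanding, after clearing denominators, as finite $\cR$-combinations of basis elements $M_h$ with $\bar h = \bar{f'}$. For $i = 0$, the decomposition is obtained by inspection of the eight subcases of \eqref{T0onV} together with \eqref{T0 action}: in every subcase, $v_{f'_1} T_0$ is a nonzero scalar multiple (taken from $\{1, q_0^{-1}, q_1, q_0^{-1} q_1\}$) of $v_{(f' \cdot s_0)_1}$, plus a finite sum of $v_{h_1}$'s with $\bar{h_1} = \bar{f'_1}$.

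For $i = d$, I expand $T_d$ via \eqref{eq:Td1} and track the leading term through the composition: $X_d$ shifts the $d$-th entry by $-n$; the chain $T_{d-1}^{-1} \cdots T_1^{-1}$ rotates this entry into the first slot; $T_0^{-1}$ flips it to $n - f'_d$ (by the $i = 0$ analysis above); and the chain $T_1^{-1} \cdots T_{d-1}^{-1}$ rotates it back to the $d$-th slot, producing the leading term $q_1^{-1} M_{f' \cdot s_d}$. Every correction generated at any intermediate stage has the same multiset of residues as the corresponding leading term at that stage, because each of the preceding Key-Lemma steps produces only corrections preserving the residue multiset; hence the overall corrections satisfy $\bar h = \bar{f'}$, and the decomposition for $i = d$ holds with $c = q_1^{-1} \in \cR^\times$. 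The main obstacle is the meticulous boundary-case bookkeeping for $T_0$ arising from the eight subcases of \eqref{T0onV}, together with the propagation of residue-preserving corrections through the composition defining $T_d$; both, however, reduce to routine verifications using the explicit formulas \eqref{Ti action}--\eqref{T0onV}.
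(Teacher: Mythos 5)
Your reduction of the corollary to the Key Lemma is fine, and your treatment of the cases $0\leq i\leq d-1$ is essentially sound (modulo the minor point that when $\bar f_i=\bar f_{i+1}$, or when $k\in\{0,r+1\}$ for $T_0$, a correction term can merge with the leading term, so the leading coefficient is not always in your stated list --- it is still invertible, as one checks directly). The genuine gap is in the case $i=d$: your claim that all correction terms in $M_{f'}T_d$ satisfy $\bar h=\bar{f'}$ is false once $d\geq 2$. Already your propagation heuristic is internally inconsistent: under $T_0$ the corrections in \eqref{T0onV} carry the \emph{same} residue as the input, while the leading term carries the \emph{negated} residue, so "corrections have the same residue multiset as the leading term at that stage" cannot be what propagates. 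Tracking the chain \eqref{eq:Td1} honestly, one finds corrections with swapped and sign-flipped residues. Concretely, take $d=2$ and $f'=(a,b)$ with $1\leq b<a\leq r$. Using \eqref{Ti action}, \eqref{actionX}, \eqref{T0onV} and $T_i^{-1}=T_i+(q-q^{-1})$, $T_0^{-1}=q_0q_1^{-1}T_0+(q_0-q_1^{-1})$, a step-by-step computation of $M_{(a,b)}T_2=q_0^{-1}M_{(a,b-n)}T_1^{-1}T_0^{-1}T_1^{-1}$ gives
\begin{equation*}
M_{(a,b)}T_2 \;=\; q_1^{-1}M_{(a,\,n-b)} \;+\; (q_1^{-1}-q_0^{-1})\,M_{(a,b)} \;+\; q_1^{-1}(q^{-1}-q)\,M_{(b,\,n-a)}.
\end{equation*}
The last term has residue tuple $(b,-a)$, which differs from $\bar{f'}=(a,b)$ even as a multiset (since $1\leq a,b\leq r$), so $M_{(b,n-a)}$ is \emph{not} an $X$-monomial translate of $M_{(a,b)}$, and your mechanism for absorbing corrections into $\mathbb{H}\cdot M_{f'}$ breaks down. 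The statement of the Key Lemma for $i=d$ may well still be true, but proving it requires a genuine triangularity induction (e.g.\ on inversions/sign defects of the residue tuple), not residue preservation.

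The cleaner repair --- and this is what the paper's one-line proof via \eqref{FD1}--\eqref{FD2} and \eqref{actionX} amounts to --- is to avoid $s_d$ and $T_d$ altogether by exploiting the decomposition of the affine Weyl group of type $C_d$ as (finite type $C_d$ group generated by $s_0,\ldots,s_{d-1}$) $\ltimes$ (translations by $n\ZZ^d$). Translations are implemented exactly by invertible $X$-monomials via \eqref{actionX}, so $\mathbb{H}M_f=\mathbb{H}M_{\bar f}$ for every $f$; and every residue tuple is brought to a sorted representative in $\{0\leq f_1\leq\cdots\leq f_d\leq r+1\}$ by the signed-permutation action of $s_0,\ldots,s_{d-1}$ alone. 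For these generators your own analysis is correct: every correction in \eqref{Ti action} is $M_{\bar f}$ times a Laurent monomial in the $z_a$'s, and every correction in \eqref{T0onV} keeps the residue of the first entry, so all corrections are $X$-shifts of the input and the leading coefficient is invertible. Your induction then goes through verbatim over the finite group, and the problematic $i=d$ case never arises.
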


\section{Schur duality in three parameters}\label{sec:Schur}

In this section, we establish the Schur $(\UUC, \bbH)$-duality on $\mathbb{V}^{\otimes d}$. To that end, we study the structures of the affine Schur algebra.

\subsection{Affine Schur algebras}

From now on, we fix
\[
r, d\in\ZZ \quad \text{ such that } \quad
r\geq d\geq 1.
\]
Recall $n=2r+2$.
Let $\NN = \{ 0,1,2,\ldots \}$.
Denote the set of (weak) compositions of $d$ into $r+2$ parts
by
\eq \label{def:Ld}
\Ld_{n,d} := \Big \{ \ld = (\ld_0, \ld_1, \ldots, \ld_{r+1})\in\NN^{r+2} ~\big |~ \sum_{i=0}^{r+1} \ld_i = d \Big \}.
\endeq
For  $\ld \in \Ld_{n,d}$, let $W_\ld$ be the parabolic (finite) subgroup of $W$
generated by $S\backslash\{s_{\ld_0}, s_{\ld_{0,1}},\ldots, s_{\ld_{0,r}}\}$,
where $\ld_{0,i} =\ld_0 + \ld_1 + \ldots + \ld_i$ for $1 \leq i \leq r$;
note 
$\ld_{0,r} =d -\ld_{r+1}$.

We note that the element
\begin{equation*}
\omega:=(0, \underbrace{1,\ldots,1}_d, \underbrace{0,\ldots,0}_{r-d},0)\in\Ld_{n,d}
\end{equation*}
makes sense under the assumption $r\geq d$.

Recall $T_w$ in \eqref{Tw} and $q_w$ in \eqref{qw}.
For any finite subset $X \subset W$ and for $\lambda \in \Lambda_{n,d}$, set
$$T_X := \sum_{w \in X} q_w^{-1}T_w \quad\mbox{and}\quad x_{\lambda} := T_{W_{\lambda}}.$$

\begin{lemma}\label{xlm}
For $\lambda \in \Lambda_{n,d}$ and for $i \in \{ 0,1,\ldots,d \} \setminus \{ \lambda_0,\lambda_{0,1},\ldots,\lambda_{0,r} \}$, we have
\[
x_{\lambda} T_i = \bc{
q_0^{-1}x_{\lambda} &\tif i = 0;
\\
q^{-1}x_{\lambda} &\tif i \neq 0,d;
\\
q_1^{-1} x_{\lambda} & \tif i = d.
}
\]
\end{lemma}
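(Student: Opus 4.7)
Since $i \notin \{\lambda_0, \lambda_{0,1}, \ldots, \lambda_{0,r}\}$, the simple reflection $s_i$ is one of the generators of the parabolic subgroup $W_\lambda$, so in particular $s_i \in W_\lambda$. The plan is to run the standard coset argument: decompose $W_\lambda = W_\lambda^+ \sqcup W_\lambda^+ s_i$, where $W_\lambda^+ := \{w \in W_\lambda \mid \ell(w s_i) > \ell(w)\}$. For $w' \in W_\lambda^+$ one has $T_{w'} T_i = T_{w' s_i}$ and $q_{w' s_i} = q_{w'} q_{s_i}$; for $w = w' s_i \in W_\lambda^+ s_i$ one has $T_w T_i = T_{w'} T_i^2$, and the Hecke relation reduces $T_i^2$ to a linear combination of $T_i$ and $1$.

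The Hecke relation takes the unified form $T_i^2 = (a_i - b_i) T_i + a_i b_i$, where $(a_i, b_i) = (q_0^{-1}, q_1)$ if $i = 0$, $(a_i, b_i) = (q^{-1}, q)$ if $i \neq 0, d$, and $(a_i, b_i) = (q_1^{-1}, q_0^{-1})$ if $i = d$, this last case being supplied by Lemma~\ref{lem:braid}(1). The crucial observation is that in each of the three cases $b_i = q_{s_i}$, where $q_{s_i}$ is the parameter defined in \eqref{qw}. Substituting and collecting the coefficient of $T_u$ in $x_\lambda T_i$ for each $u \in W_\lambda$, the $T_{w'}$-term ($w' \in W_\lambda^+$) acquires coefficient $q_{w' s_i}^{-1} a_i b_i = q_{w'}^{-1} q_{s_i}^{-1} a_i b_i = q_{w'}^{-1} a_i$, while the $T_w$-term ($w = w' s_i \in W_\lambda^+ s_i$) acquires coefficient $q_{w'}^{-1} + q_w^{-1}(a_i - b_i) = q_w^{-1}(b_i + a_i - b_i) = q_w^{-1} a_i$. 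Both are uniformly $q_u^{-1} a_i$, hence $x_\lambda T_i = a_i \, x_\lambda$, which matches the three eigenvalues stated in the lemma.

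There is essentially no obstacle beyond bookkeeping. The one mildly delicate point is the case $i = d$, because $T_d$ was defined in \eqref{eq:Td1} as a non-commutative polynomial in the generators $T_0, \ldots, T_{d-1}, X_d$ rather than as a primitive generator of $\bbH$. However, Lemma~\ref{lem:braid}(1) furnishes exactly the required quadratic relation, and Lemma~\ref{lem:braid}(2)--(3) guarantee that the reduced-expression recipe \eqref{Tw} extends consistently to words involving $s_d$, so the coset decomposition argument applies verbatim.
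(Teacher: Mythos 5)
Your proof is correct and follows essentially the same route as the paper: the paper likewise pairs $W_\lambda$ into cosets of $\langle s_i\rangle$ by writing $x_\lambda = \sum_{ws_i<w} q_w^{-1}(T_w + q_{s_i}T_{ws_i})$ and applies the quadratic relation (with Lemma~\ref{lem:braid}(1) supplying the $i=d$ case) in the factored form $(T_i+q_{s_i})T_i = p_{s_i}(T_i+q_{s_i})$, which is exactly your observation that $b_i = q_{s_i}$, just packaged as an eigenvector identity instead of coefficient bookkeeping. Your additional remark that the well-definedness of $T_w$ for words involving $s_d$ rests on Lemma~\ref{lem:braid}(2)--(3) is a point the paper handles immediately after that lemma, so nothing is missing.
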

\proof
Let us write $x_\lambda = \sum_{\substack{w \in W_\lambda \\ ws_i < w}}q_w^{-1}(T_w + q_{s_i} T_{ws_i})$. Then
\[
(T_w + q_{s_i}T_{ws_i})T_i = T_{ws_i}(T_i + q_{s_i})T_i = p_{s_i} T_{ws_i}(T_i + q_{s_i}) = p_{s_i}(T_w + q_{s_i}T_{ws_i}),
\]
where
\[
p_{s_i} := \bc{
q_0^{-1} &\tif i = 0;
\\
q^{-1} &\tif i \neq 0,d;
\\
q_1^{-1} &\tif i = d.}
\]
The lemma follows.
\endproof

The affine Schur algebra $\bSjj$ of 3-parameter is defined as the following $\cR$-algebra
$$\bSjj := \End_{\bbH}(\oplus_{\lambda \in \Lambda_{n,d}} x_{\lambda}\bbH) =\bigoplus_{\lambda,\mu \in \Lambda_{n,d}} \Hom_{\bbH}(x_{\mu}\bbH,x_{\lambda}\bbH).$$

Denote by $\ell(g)$ the length of $g\in W$. Let
\eq  \label{eq:Dmin}
\D_\ld := \big\{g\in \W ~|~ \ell(wg) = \ell(w) + \ell(g), \forall  w\in \W_\ld \big\}.
\endeq
Then $\D_\ld$ (respectively, $\D_\ld^{-1}$) is the set of minimal length right (respectively, left)
coset representatives of $\W_\ld$ in $\W$.
Denote by
\eq  \label{eq:Dmin2}
\D_{\ld\mu} = \D_\ld \cap \D_\mu^{-1}
\endeq
the set of minimal length double coset representatives
for $W_\ld \backslash W /W_\mu$.

For $\lambda,\mu\in \Ld_{n,d}$ and $g\in \D_{\ld\mu}$, define $\phi^g_{\lambda,\mu} \in \bSjj$ by
$$\phi^g_{\lambda,\mu}(x_\nu)=\delta_{\mu,\nu}T_{W_{\lambda} g W_{\mu}}, \quad \forall \nu\in\Ld_{n,d}.$$
It is straightforward to show that $\{\phi^g_{\lambda,\mu} \mid \lambda,\mu \in \Lambda_{n,d}, g \in \D_{\lambda,\mu} \}$ form an $\cR$-basis of $\bSjj$ (cf., e.g., \cite{DJ89, DDF12, FL3Wb}).

Define the right $\bbH$-module
\[
\bTjj := \bigoplus_{\lambda \in \Lambda_{n,d}} x_{\lambda}\bbH.
\]
Thanks to Corollary~\ref{FD} and Lemma \ref{xlm}, we have the following.
\begin{lemma}
  \label{lem:iso}
There exists a unique $\bbH$-module isomorphism
$\kappa: \bTjj \longrightarrow \mathbb{V}^{\otimes d}$ which sends
\begin{equation}
x_\lambda \mapsto M_{\lambda}:=M_{(0^{\lambda_0}, \ldots, r+1^{\lambda_{r+1}})} = v_0^{\otimes \lambda_0} \otimes \cdots \otimes v_{r+1}^{\otimes \lambda_{r+1}} \in \mathbb{V}^{\otimes d},\quad \forall \lambda \in \Lambda_{n,d}.
\end{equation}
This induces an algebra isomorphism $\bSjj \simeq \End_{\bbH}(\mathbb{V}^{\otimes d})$.
\end{lemma}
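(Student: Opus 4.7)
The plan is to construct $\kappa$ as the $\bbH$-linear extension of $x_\lambda \mapsto M_\lambda$, verify well-definedness by checking that $M_\lambda$ satisfies the relations cutting out $x_\lambda \bbH$ as a cyclic module, deduce surjectivity from Corollary~\ref{FD}, and obtain injectivity by comparing canonical $\cR$-bases on the two sides. Uniqueness is automatic, since $\bTjj$ is $\bbH$-generated by $\{x_\lambda\}_{\lambda}$.

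\textbf{Well-definedness.} The right ideal $\mathrm{Ann}_{\bbH}(x_\lambda) \subseteq \bbH$ is generated by $\{T_i - p_{s_i} : s_i \in W_\lambda\}$ in the notation of Lemma~\ref{xlm}, so it suffices to verify $M_\lambda T_i = p_{s_i} M_\lambda$ for every $i \in \{0, 1, \ldots, d\} \setminus \{\lambda_0, \lambda_{0,1}, \ldots, \lambda_{0,r}\}$. For $1 \leq i \leq d-1$ with $s_i \in W_\lambda$, adjacent coordinates of $M_\lambda$ coincide and \eqref{FD1} gives $M_\lambda T_i = q^{-1} M_\lambda$; for $i=0$ with $\lambda_0 > 0$, \eqref{FD2} gives $M_\lambda T_0 = q_0^{-1} M_\lambda$. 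The remaining case $i = d$ with $\lambda_{r+1} > 0$ is not covered by \eqref{FD1}--\eqref{FD2}, so I would unwind \eqref{eq:Td1} and apply the operators to $M_\lambda$ in order: $T_{d-1}^{-1}, \ldots, T_1^{-1}$ transport one $v_{r+1}$ from the tail to position one (each step contributing a scalar from the equal-entry case of \eqref{FD1}), then $T_0^{-1}$ acts on this $v_{r+1}$ through the identity $v_{r+1} T_0 = q_0^{-1} q_1\, v_{-r-1}$ extracted from the first line of \eqref{T0onV}, after which $T_1, \ldots, T_{d-1}$ and $q_0^{-1} X_d$ shift the modified factor back; the cumulative scalar is exactly $q_1^{-1} = p_{s_d}$.

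\textbf{Bijectivity.} Surjectivity is immediate from Corollary~\ref{FD}. For injectivity, length-additivity for the decomposition $W = W_\lambda \cdot \D_\lambda$ provides an $\cR$-basis $\{x_\lambda T_g : \lambda \in \Lambda_{n,d},\ g \in \D_\lambda\}$ of $\bTjj$, while $\mathbb{V}^{\otimes d}$ has the tensor basis $\{M_f : f \in \ZZ^d\}$. The $W$-action \eqref{eq:Wonf} has the dominant chamber $\{(f_1,\ldots,f_d) \in \ZZ^d : 0 \leq f_1 \leq \cdots \leq f_d \leq r+1\}$ as a fundamental domain, naturally identified with $\Lambda_{n,d}$, and the stabilizer of the point indexed by $\lambda$ is exactly $W_\lambda$; this produces a bijection $\bigsqcup_{\lambda} \D_\lambda \to \ZZ^d$, $(\lambda, g) \mapsto \lambda \cdot g$, matching the basis labels on the two sides. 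Induction on $\ell(g)$ using \eqref{Ti action} and \eqref{T0 action}--\eqref{T0onV} then shows
\[
\kappa(x_\lambda T_g) \;=\; c_{\lambda, g}\, M_{\lambda \cdot g} \;+\; (\text{lower-length terms})
\]
with $c_{\lambda, g} \in \cR^\times$, so the images of basis elements remain $\cR$-linearly independent, giving injectivity. The induced algebra isomorphism $\bSjj \simeq \End_{\bbH}(\mathbb{V}^{\otimes d})$ is then formal, since $\bSjj = \End_{\bbH}(\bTjj)$ by definition.

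\textbf{Main obstacle.} The two hardest points are (i) the $T_d$-check in the well-definedness step, since $T_d$ has no direct tensor-action description and must be untangled through the long braid word in \eqref{eq:Td1} combined with the multi-case identity \eqref{T0onV}; and (ii) the triangularity claim in the injectivity step, where one must show that the correction terms $M_{\bar f} P^{(i)}_\pm(Z_f)$ from \eqref{Ti action} and the cascading $v_k z^\ast$ sums in \eqref{T0onV} all lie in a strictly lower stratum of a length filtration on $W$-orbits in $\ZZ^d$; this requires a careful total order on $\ZZ^d$ refining the $W$-length of the minimal coset representative so that the recursion can be closed.
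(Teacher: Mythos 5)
Your proposal is correct and takes essentially the same route as the paper, whose one-line proof simply cites Lemma~\ref{xlm} and Corollary~\ref{FD}: matching the eigenvalue relations of $x_\lambda$ (via the annihilator ideal generated by $T_i - p_{s_i}$, $s_i \in W_\lambda$) against \eqref{FD1}--\eqref{FD2} plus the extra $T_d$-check, surjectivity from Corollary~\ref{FD}, and injectivity from the orbit/coset bijection with a triangularity argument are exactly the standard details the paper leaves implicit. One small note: your cumulative scalar $q_1^{-1}$ for the $T_d$-check is right, and the transport is even cleaner than you suggest, since \eqref{eq:Td1} uses $T_i^{-1}$ on both legs of the journey, so each passage over a factor of distinct residue is a correction-free swap and only the equal-residue passages contribute scalars $q^{\mp 1}$, which cancel against each other and combine with $q_0^{-1}\cdot q_0q_1^{-1}$ from $X_d$ and $T_0^{-1}$ to give $q_1^{-1}$.
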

%
%
\subsection{The $\jmath\jmath$-Schur duality}

\begin{prop}\label{commute}
The actions of $\UUC$ and $\HH$ on $\mathbb{V}^{\otimes d}$ commute.
\end{prop}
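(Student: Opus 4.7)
The plan is to reduce commutativity to a short list of identities at the $d=1$ level and verify them directly. Since $\bbH$ is generated by $T_0, T_1, \ldots, T_{d-1}, X_1^{\pm 1}$ and $\UUC$ by $\be_i, \bbf_i, \bh_a^{\pm 1}$, it suffices to show that each generator of $\UUC$ commutes with each generator of $\bbH$. The subalgebra $\bbH_A$ generated by $T_1, \ldots, T_{d-1}, X_a^{\pm 1}$ commutes with the entire $\UUAg$-action on $\VV^{\otimes d}$ by the affine type~A Schur--Jimbo duality (cf.\ \cite{KMS}); via the embedding $\jjw: \UUC \hookrightarrow \UUAg$ of Proposition~\ref{prop:homjj}, this commutation passes automatically to the $\UUC$-action.

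It remains to verify that $T_0$ commutes with each generator $x$ of $\UUC$. By \eqref{T0 action}, $T_0 = \tau \otimes 1^{\otimes(d-1)}$, where $\tau$ is the operator on $\VV$ from \eqref{T0onV}, so $[T_0, A_1 \otimes \cdots \otimes A_d] = [\tau, A_1] \otimes A_2 \otimes \cdots \otimes A_d$ for any pure tensor. Expanding $\rho(x) = \Delta^{(d-1)}(\jjw(x))$ using \eqref{embeding1}--\eqref{embeding4}, the ``$k=1$'' summands assemble into a single term of the form $\jjw(x)|_\VV \otimes (\textrm{common tail on slots } 2,\ldots,d)$, while the remaining ``$k \ge 2$'' summands have slot-$1$ factors given by grouplike operators ($1$ or $\bk_i^{\pm 1}$) tensored with linearly independent slot-$\ge 2$ patterns that each contain exactly one $\bF_\bullet \bK_\bullet^{-1}$. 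Consequently, the vanishing of $[T_0, \rho(x)]$ reduces to the following $d=1$ identities on $\VV$: $[\tau, \jjw(x)|_\VV] = 0$ and $[\tau, \bh_a|_\VV] = 0$ for all $a$ (the latter implying $[\tau, \bk_i^{\pm 1}|_\VV] = 0$).

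The relations $[\tau, \bh_a|_\VV] = 0$ are immediate: by Lemma~\ref{UconV}, $\bh_a$ scales $v_j$ by a factor depending only on $j \pmod n$ and invariant under $j \mapsto -j$ (reflecting the involution in Figure~\ref{figure:jj}), and by \eqref{T0onV} $\tau$ sends $v_j$ into the span of $\{v_{j'} : j' \equiv \pm j \pmod n\}$. For $[\tau, \be_i|_\VV] = 0$ and $[\tau, \bbf_i|_\VV] = 0$ with interior index $1 \le i \le r-1$, a short case analysis on the residue of $j$ suffices, since $\be_i, \bbf_i$ there act as plain index shifts. \emph{The main obstacle} is the boundary cases $i \in \{0, r\}$: here $\jjw(\be_i), \jjw(\bbf_i)$ are twisted by $q_0, q_1$ as in \eqref{embeding3}--\eqref{embeding4}, and \eqref{T0onV} branches into many residue-dependent summands with coefficients in $q_0^{\pm 1} q_1^{\pm 1}$. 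The verification amounts to matching, case by case in the residue of $j$, the coefficient of each output $v_{j'}$ on both sides of $[\tau, \be_i] v_j = 0$ and $[\tau, \bbf_i] v_j = 0$; the resulting finite list of scalar identities in $\cR$ holds precisely because of the specific $q_0, q_1$-twists chosen in the embedding, thereby completing the argument.
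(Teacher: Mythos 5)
Your skeleton coincides with the paper's proof: it likewise dispatches $T_1,\ldots,T_{d-1},X_a^{\pm1}$ by citing the known commutation of the $\UUA$- and $\HH_A$-actions, then asserts that for $T_0$ it suffices to check the case $d=1$, and finishes by direct computation using Lemma~\ref{UconV} and \eqref{T0onV}. Your slot-by-slot analysis of $\Delta^{(d-1)}(\jjw(x))$ --- grouping the two slot-one summands $\bE_i$ and $\bF_{-i-1}\bK_i^{-1}$, which share the common tail $(\bK_i^{-1})^{\otimes(d-1)}$, and noting that every remaining summand has grouplike slot-one factor $1$ or $\bK_{-i-1}\bK_i^{-1}=\jjw(\bk_i)^{-1}$ --- is in fact a more explicit justification of the paper's one-line reduction, and your support argument for $[\tau,\bh_a]=0$ is correct. (Your appeal to linear independence of the tails is unnecessary, since vanishing of each slot-one commutator already suffices, and your description of the $\bE$-type tails as containing a factor $\bF_\bullet\bK_\bullet^{-1}$ is inaccurate; both points are harmless.)

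The genuine gap is that you stop exactly where the proposition has content. For $x\in\{\be_0,\bbf_0,\be_r,\bbf_r\}$ you assert that the residue-by-residue scalar identities ``hold precisely because of the specific $q_0,q_1$-twists chosen in the embedding,'' without exhibiting or verifying any of them. But these identities are the theorem, not a consequence of design: with other coefficients in \eqref{embeding3}--\eqref{embeding4} (or in \eqref{T0onV}) the commutation fails, so ``because of the twists'' is circular as written. The paper devotes essentially its entire proof to these cases, and they are not mechanical. For instance, for $\be_r$ and $v=v_{r+1}z^j$ one has $\be_r v=v_rz^j+v_{-r}z^{j+1}$; the shift $j\mapsto j+1$ in the second summand moves it across a case boundary of \eqref{T0onV} (for $j<0$ it lands in the branch with $\sum_{l=0}^{-j-1}$ rather than $\sum_{l=1}^{-j}$), and one must peel off extremal terms, re-index the telescoping sums, and use identifications such as $v_{-r}z^{j+1}=v_{-r+n}z^{j}$ before the result visibly equals $\be_r(v_{r+1}z^jT_0)$. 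The cases $k=-1$ with $j>0$ versus $j\leq0$ for $\be_0$ exhibit a similar off-by-one asymmetry ($\sum_{l=1}^{j-1}$ versus $\sum_{l=0}^{-j}$). To complete your argument you must actually carry out this finite but delicate coefficient matching for all four boundary generators, as the paper does explicitly for $\be_0$ (four subcases) and $\be_r$ (two subcases), with $\bbf_0,\bbf_r$ noted as similar.
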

\proof
It is known that the actions of $\UUA$ and $\HH_A$ on $\mathbb{V}^{\otimes d}$ commute.
It remains to check that the $T_0$-action commutes with the $\UUC$-action, and it suffices to check the special case $d=1$.

It follows from a direct computation (using Lemma \ref{UconV} and \eqref{T0onV}) that the $T_0$-action commutes with the actions of all generators of $\UUC$.
The calculation is simple except for $\be_0, \be_r$ and $\bbf_0, \bbf_r$, which are complicated but similar --
here we only provide a verification for $(\be_0 v)T_0 = \be_0( vT_0)$ and $(\be_r v)T_0 = \be_r( vT_0)$ for $v =  v_kz^j\in \VV$.

(1)  We claim that $(\be_0 v)T_0 = \be_0( vT_0)$.

Indeed, if $k\neq\pm1$, then
$(\be_0 v_kz^j)T_0=0=\be_0(v_kz^j T_0)$.
There are four cases remaining. If $k=1$ and $j\geq 0$, we have
\bAn{
(\be_0 v_1z^j)T_0&=q_0^{-1}v_{0}z^{-j}+(q_1-q_0^{-1})\sum_{l=1}^{j}v_0 z^{j-2l}+(q_0^{-1}q_1-1)\sum_{l=1}^jv_0z^{j+1-2l}
\\
&=\be_0\Bp{
v_{-1}z^{-j}+(q_1-q_0^{-1})\sum_{l=1}^{j}v_1 z^{j-2l}+(q_0^{-1}q_1-1)\sum_{l=1}^jv_1z^{j+1-2l}
}\\
&=\be_0(v_1z^j T_0).
}
If $k=1$ and $j<0$, we obtain
\bAn{
(\be_0 v_1z^j)T_0&=q_1v_{0}z^{-j}+(q_0^{-1}-q_1)\sum_{l=0}^{-j}v_0 z^{-j-2l}+(1-q_0^{-1}q_1)\sum_{l=1}^{-j}v_0z^{-j+1-2l}
\\
&=\be_0\Bp{
v_{-1}z^{-j}+(q_0^{-1}-q_1)\sum_{l=1}^{-j}v_1 z^{-j-2l}++(1-q_0^{-1}q_1)\sum_{l=1}^{-j}v_1z^{-j+1-2l}
}\\
&=\be_0(v_1z^j T_0).
}
For $k = -1$ and $j>0$, we have
\bAn{
(\be_0 v_{-1}z^j)T_0&=q_0^{-2}v_{0}z^{-j}+q_0^{-1}(q_1-q_0^{-1})\sum_{l=1}^{j}v_0 z^{j-2l}+q_0^{-1}(q_0^{-1}q_1-1)\sum_{l=1}^jv_0z^{j+1-2l}
\\
&=\be_0\Bp{
q_0^{-1}q_1v_{1}z^{-j}+(q_1-q_0^{-1})\sum_{l=1}^{j-1}v_{-1} z^{j-2l}+(q_0^{-1}q_1-1)\sum_{l=1}^jv_{-1}z^{j+1-2l}
}\\
&=
\be_0(v_{-1}z^j T_0).
}
Finally for  $k = -1$ and $j\leq 0$, we have
\bAn{
(\be_0 v_{-1}z^j)T_0&=q_0^{-1}q_1v_{0}z^{-j}+q_0^{-1}(q_0^{-1}-q_1)\sum_{l=0}^{-j}v_0 z^{-j-2l}+q_0^{-1}(1-q_0^{-1}q_1)\sum_{l=1}^{-j}v_0z^{-j+1-2l}
\\
&=\be_0 \Bp{
q_0^{-1}q_1v_{1}z^{-j}+(q_0^{-1}-q_1)\sum_{l=0}^{-j}v_{-1} z^{-j-2l}+(1-q_0^{-1}q_1)\sum_{l=1}^{-j}v_{-1}z^{-j+1-2l}
}\\
&=\be_0(v_{-1}z^j T_0).
}

(2) We claim that $(\be_r v)T_0=\be_r(v T_0)$.

Indeed, if $k\neq r+1$, then
$(\be_r v_kz^j)T_0=0=\be_r(v_kz^j T_0)$. There are two cases remaining. If $k=r+1$ and $j\geq 0$, we have
\bAn{
(\be_r v_{r+1}z^j)T_0&=(v_{r}z^j+v_{-r}z^{j+1})T_0\\
&=v_{-r}z^{-j}
+(q_1-q_0^{-1})\sum_{l=1}^{j}v_rz^{j-2l}+(q_0^{-1}q_1-1)\sum_{l=1}^{j}v_rz^{j+1-2l}\\
&\quad+q_0^{-1}q_1v_{r}z^{-j-1}+
(q_1-q_0^{-1})\sum_{l=1}^{j}v_{-r}z^{j+1-2l}+(q_0^{-1}q_1-1)\sum_{l=1}^{j+1}v_{-r}z^{j+2-2l}\\
&=q_0^{-1}q_1v_{-r}z^{-j}
+(q_1-q_0^{-1})\sum_{l=1}^{j}v_rz^{j-2l}+(q_0^{-1}q_1-1)\sum_{l=1}^{j}v_rz^{j+1-2l}\\
&\quad+q_0^{-1}q_1v_{-r-2}z^{-j}+
(q_1-q_0^{-1})\sum_{l=1}^{j}v_{r+2}z^{j-2l}+(q_0^{-1}q_1-1)\sum_{l=1}^{j}v_{r+2}z^{j+1-2l}\\
&=\be_r(q_0^{-1}q_1v_{-r-1}z^{-j}
+(q_1-q_0^{-1})\sum_{l=1}^{j}v_{r+1}z^{j-2l}+(q_0^{-1}q_1-1)\sum_{l=1}^{j}v_{r+1}z^{j+1-2l})\\
&=\be_r(v_{r+1}z^j T_0).
}
If $k=r+1$ and $j<0$, we have
\bAn{
(\be_r v_{r+1}z^j)T_0&=(v_{r}z^j+v_{-r}z^{j+1})T_0\\
&=v_{-r}z^{-j}
+(q_0^{-1}-q_1)\sum_{l=1}^{-j}v_r z^{-j-2l}+(1-q_0^{-1}q_1)\sum_{l=1}^{-j}v_r z^{-j+1-2l}\\
&\quad+q_0^{-1}q_1v_{r}z^{-j-1}
+(q_0^{-1}-q_1)\sum_{l=0}^{-j-1}v_{-r} z^{-j-1-2l}+(1-q_0^{-1}q_1)\sum_{l=1}^{-j-1}v_{-r} z^{-j-2l}\\
&=v_{-r}z^{-j}
+(q_0^{-1}-q_1)\sum_{l=1}^{-j}v_r z^{-j-2l}+(1-q_0^{-1}q_1)\sum_{l=2}^{-j}v_r z^{-j+1-2l}\\
&\quad +v_{-r-2}z^{-j}
+(q_0^{-1}-q_1)\sum_{l=1}^{-j}v_{r+2} z^{-j-2l}+(1-q_0^{-1}q_1)\sum_{l=2}^{-j}v_{r+2} z^{-j+1-2l}\\
&=\be_r(v_{-r-1}z^{-j}
+(q_0^{-1}-q_1)\sum_{l=1}^{-j}v_{r+1} z^{-j-2l}+(1-q_0^{-1}q_1)\sum_{l=2}^{-j}v_{r+1} z^{-j+1-2l})\\
&=\be_r(v_{r+1}z^j T_0).
}

The proposition is proved.
\endproof

By Proposition~\ref{commute} and the above identification $\bSjj \simeq \End_{\bbH}(\mathbb{V}^{\otimes d})$, there exists an $\cR$-algebra homomorphism
$$\Psi : \mathbb{U}^{\mathfrak{c}}(\mathfrak{\widehat{sl}}_n) \longrightarrow \bSjj.$$
The next lemma follows by a standard Vandermonde determinant type argument.

\begin{lemma}\label{xi(h_a)}
For each $\lambda \in \Lambda_{n,d}$, the element $\phi^e_{\lambda,\lambda} \in \bSjj$ belongs to the subalgebra of $\bSjj$ generated by $\Psi(\bh_a^{\pm 1})$, $0 \leq a\leq r+1$.
\end{lemma}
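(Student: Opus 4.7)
The plan is to identify $\phi^e_{\lambda,\lambda}$, transported through the isomorphism of Lemma~\ref{lem:iso}, as the projection onto a common eigenspace of the pairwise commuting operators $\Psi(\bh_0),\ldots,\Psi(\bh_{r+1})$, and then extract it by Lagrange/Vandermonde interpolation.

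First I would unpack what $\phi^e_{\lambda,\lambda}$ does under $\kappa: \bTjj \xrightarrow{\sim} \mathbb{V}^{\otimes d}$. By definition, $\phi^e_{\lambda,\lambda}$ is the idempotent projecting $\bigoplus_{\mu}x_\mu\bbH$ onto $x_\lambda\bbH$, so under $\kappa$ it is the projection of $\mathbb{V}^{\otimes d}=\bigoplus_{\mu\in\Lambda_{n,d}}M_\mu\bbH$ onto $M_\lambda\bbH$. Next I would show that $M_\mu\bbH$ coincides with the simultaneous $\bh$-weight space of $\mathbb{V}^{\otimes d}$ indexed by $\mu$. By Lemma~\ref{UconV}, each $\bh_a$ acts diagonally on the basis $\{v_j\}$ of $\mathbb{V}$ and hence on the basis $\{M_f\}$ of $\mathbb{V}^{\otimes d}$; the eigenvalue on $M_f$ depends only on the composition $\mu(f)\in\Lambda_{n,d}$ counting how many of the $f_i$ fall into each orbit of $j\mapsto -j \pmod n$. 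A quick inspection of the formulas \eqref{Ti action}, \eqref{actionX}, \eqref{T0 action}--\eqref{T0onV} and \eqref{eq:Td1} shows that every generator of $\bbH$ preserves the partition of $\mathbb{Z}$ into these orbits (the $T_i$, $1\le i\le d-1$, permute factors; the $X_a$ shift by multiples of $n$; and $T_0$ and $T_d$ only swap $v_k\leftrightarrow v_{-k}$ up to scalars and corrections within the same orbit). Hence each $\bbH$-submodule $M_\mu\bbH$ is contained in $(\mathbb{V}^{\otimes d})_\mu$, and from the direct sum decomposition we read off $M_\mu\bbH=(\mathbb{V}^{\otimes d})_\mu$.

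With this in hand, I would invoke the Vandermonde argument. The operators $\Psi(\bh_a)$ commute (since the $\bh_a$ do), and by Lemma~\ref{UconV} the eigenvalue of $\bh_a$ on $(\mathbb{V}^{\otimes d})_\mu$ is
\[
q^{2\mu_a} \text{ if } a\in\{0,r+1\}, \qquad q^{\mu_a} \text{ if } 0<a<r+1.
\]
Since $q$ is an indeterminate, the powers $\{q^m : 0\le m\le 2d\}$ are pairwise distinct in $\cR$, so Lagrange interpolation yields, for each $a$ and each integer $0\le c\le d$, a polynomial $p_{a,c}(x)\in\cR[x]$ satisfying $p_{a,c}(q^{2c})=1$ (resp.\ $p_{a,c}(q^c)=1$) and $p_{a,c}(q^{2c'})=0$ (resp.\ $p_{a,c}(q^{c'})=0$) for $c'\neq c$ with $0\le c'\le d$. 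Then
\[
\prod_{a=0}^{r+1} p_{a,\lambda_a}\!\bigl(\Psi(\bh_a)\bigr)
\]
acts on each weight space $(\mathbb{V}^{\otimes d})_\mu$ as $\prod_a\delta_{\mu_a,\lambda_a}=\delta_{\mu,\lambda}$, so it equals $\phi^e_{\lambda,\lambda}$. Since $\Psi(\bh_a)$ is invertible, this product lies in the subalgebra generated by $\Psi(\bh_a^{\pm 1})$, $0\le a\le r+1$, proving the lemma.

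The main (mild) obstacle is the weight-preservation check for the $T_0$-action: one must read through the seven cases of \eqref{T0onV} and confirm that every summand on the right-hand side of $v_{f_1}T_0$ lies in the same $\{\pm f_1\pmod n\}$ orbit as $v_{f_1}$. The $T_d$-case then follows via \eqref{eq:Td1}. Once weight-preservation is established, the rest of the proof is a routine Vandermonde/Lagrange interpolation and requires no further structural input.
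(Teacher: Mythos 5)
Your proof is correct and is exactly the paper's intended argument: the paper gives no details beyond invoking ``a standard Vandermonde determinant type argument,'' which is precisely what you carry out (identifying $\phi^e_{\lambda,\lambda}$ with the projection onto the $\bh$-weight space $M_\lambda\bbH$ and extracting it by Lagrange interpolation in the commuting invertible operators $\Psi(\bh_a)$). Your weight-preservation check for the $\bbH$-action and the observation that distinct $\mu\in\Lambda_{n,d}$ give distinct eigenvalue tuples (since $q$ is an indeterminate) supply exactly the details the paper leaves implicit.
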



Here, we define two families of maps $\etil_i,\ \ftil_i : \Lambda_{n,d} \rightarrow \Lambda_{n,d} \sqcup \{ 0 \}$ ($0$ is a formal symbol) by
\begin{align}
\begin{split}
\etil_i(\lambda) &:= \begin{cases}
(\lambda_0, \ldots, \lambda_{i-1}, \lambda_i+1, \lambda_{i+1}-1,\lambda_{i+2}, \ldots, \lambda_{r+1}) &\tif \lambda_{i+1} > 0; \\
0 &\tif \lambda_{i+1} = 0,
\end{cases} \\
\ftil_i(\lambda) &:= \begin{cases}
(\lambda_0, \ldots, \lambda_{i-1}, \lambda_{i}-1, \lambda_{i+1}+1, \lambda_{i+2}, \ldots, \lambda_{r+1}) &\tif \lambda_i > 0; \\
0 &\tif \lambda_i = 0.
\end{cases}
\end{split} \nonumber
\end{align}
By convention, it is understood that $M_0 = 0$ and $\phi^g_{0,\mu} = 0 = \phi^g_{\lambda,0}$.

Recall the comultiplication $\Delta$ of $\mathbb{U}(\mathfrak{\widehat{sl}}_n)$ from Section~\ref{Quantum}. Then, we have
\begin{align}
\Delta^{(d-1)}(\bE_i) = \sum_{k = 0}^{d-1} 1^{\otimes k} \otimes \bE_i \otimes (\bK_i^{-1})^{\otimes d - k -1}, \quad \Delta^{(d-1)}(\bF_i) = \sum_{k = 0}^{d-1} \bK_i^{\otimes k} \otimes \bF_i \otimes 1^{\otimes d - k -1}. \nonumber
\end{align}

\begin{lemma}\label{xi(e_i)}
For $0 \leq i \leq r$, we have
\begin{align}
\begin{split}
\Psi(\be_i) &= \bc{
\sum_{\lambda \in \Lambda} q^{\lambda_{i+1}-1} \phi^e_{\etil_i(\lambda),\lambda} &\tif i \neq r;
\\
\sum_{\lambda \in \Lambda} q^{3(\lambda_{r+1}-1)}q_0q_1^{-1} \phi^e_{\etil_{r}(\lambda),\lambda} &\tif i = r,
} \\
\Psi(\bbf_i) &= \bc{
\sum_{\lambda \in \Lambda} q^{\lambda_{i}-1} \phi^e_{\ftil_i(\lambda),\lambda} &\tif i \neq 0,r;
\\
\sum_{\lambda \in \Lambda} q_1q^{2(\lambda_0 - 1)} \phi^e_{\ftil_0(\lambda),\lambda} &\tif i = 0;
\\
\sum_{\lambda \in \Lambda} q_0q_1^{-1}q^{\lambda_r - \lambda_{r+1} - 1} \phi^e_{\ftil_r(\lambda),\lambda} & \tif i = r.
} \nonumber
\end{split}
\end{align}
\end{lemma}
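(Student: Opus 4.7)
The plan is to verify the claimed formulas by evaluating both sides on each $M_\lambda$ and matching coefficients, using the isomorphism $\kappa$ from Lemma \ref{lem:iso}. Since $\phi^e_{\mu,\lambda}(x_\nu) = \delta_{\lambda,\nu} T_{W_\mu W_\lambda}$, it suffices (summing over $\lambda \in \Lambda_{n,d}$) to verify, for instance in the interior case, that $\Psi(\be_i)(M_\lambda) = q^{\lambda_{i+1}-1}\, \kappa(T_{W_{\etil_i(\lambda)} W_\lambda})$, with the obvious modifications at $i = r$, and likewise for $\bbf_i$.

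To compute the left-hand side, I apply $\jjw$ from Proposition \ref{prop:homjj} together with the iterated comultiplications
\begin{equation*}
\Delta^{(d-1)}(\bE_i) = \sum_{k=0}^{d-1} 1^{\otimes k} \otimes \bE_i \otimes (\bK_i^{-1})^{\otimes d-k-1}, \qquad \Delta^{(d-1)}(\bF_i) = \sum_{k=0}^{d-1} \bK_i^{\otimes k} \otimes \bF_i \otimes 1^{\otimes d-k-1}.
\end{equation*}
For interior $i\in\{1,\ldots,r-1\}$, the summand $\bF_{-i-1}\bK_i^{-1}$ of $\jjw(\be_i)$ annihilates $M_\lambda$, because the tensor factors $v_j$ of $M_\lambda$ have $j \in \{0,\ldots,r+1\}$, whereas $\bF_{-i-1}$ requires $j \equiv -i-1 \pmod n$. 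The nonzero contribution comes entirely from $\bE_i$, and the $\bK_i^{-1}$-eigenvalues on the factors strictly to the right of the position at which $\bE_i$ acts produce the weight $q^{\lambda_{0,i+1}-k-1}$; after re-indexing this expands to $q^{\lambda_{i+1}-1}\sum_{j=0}^{\lambda_{i+1}-1} q^{-j} M_{f(j)}$, where $f(j)$ is the sequence obtained from $(0^{\lambda_0},\ldots,(r+1)^{\lambda_{r+1}})$ by changing position $\lambda_{0,i}+j+1$ from $i+1$ to $i$. For the boundary indices $i=0, r$ and for $\bbf_0, \bbf_r$, both summands in $\jjw$ contribute and the parameters $q_0, q_1$ propagate directly from the action formulas in Lemma \ref{UconV}.

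To identify the right-hand side, I use that $\kappa$ is a right $\bbH$-module morphism with $\kappa(x_\mu)=M_\mu$, together with the decomposition
\begin{equation*}
T_{W_\mu W_\lambda} = x_\mu \sum_{k=0}^{\ell-1} q^{-k} T_{w_k},
\end{equation*}
where $w_0,\ldots,w_{\ell-1}$ are the minimal length representatives of $(W_\mu \cap W_\lambda)\backslash W_\lambda$. For $\mu=\etil_i(\lambda)$, explicitly $w_j=s_{\lambda_{0,i}+1}s_{\lambda_{0,i}+2}\cdots s_{\lambda_{0,i}+j}$ and $\ell=\lambda_{i+1}$. Iterating \eqref{Ti action}, each intermediate pair of swapped positions satisfies $\=f_{i+1}>\=f_i$, so only the first branch of \eqref{Ti action} fires and $M_\mu T_{w_j}=M_{f(j)}$ with no $q$-correction. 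This matches term-by-term the expansion of $\Psi(\be_i)(M_\lambda)$ from the previous step; the argument for $\Psi(\bbf_i)$ is parallel, using a descending chain of simple reflections in place.

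The main obstacle is the bookkeeping in the boundary cases. For $\Psi(\bbf_r)$, where $\jjw(\bbf_r) = \bE_{-r-1}+q_0q_1^{-1}\bF_r\bK_{-r-1}^{-1}$, the global $\bK_{-r-1}^{-1}$ contributes a scalar $q^{-\lambda_{r+1}}$ while the $\bK_r$-tensor positions of $\Delta^{(d-1)}(\bF_r)$ contribute $q^{k-\lambda_{0,r-1}}$; combining these and re-indexing against the corresponding coset chain yields the exponent $\lambda_r-\lambda_{r+1}-1$ in the statement rather than the naively expected $\lambda_r-1$. Analogous (but simpler) $q_0, q_1$-bookkeeping explains the prefactors $q_1 q^{2(\lambda_0-1)}$ for $\Psi(\bbf_0)$ and $q^{3(\lambda_{r+1}-1)}q_0 q_1^{-1}$ for $\Psi(\be_r)$; once the borderline cases in Lemma \ref{UconV} are in hand, the remaining verifications are direct.
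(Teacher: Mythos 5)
Your global strategy is exactly the paper's: evaluate both sides on $M_\lambda$ via $\kappa$, expand $\phi^e_{\etil_i(\lambda),\lambda}(x_\lambda)=T_{W_{\etil_i(\lambda)}W_\lambda}$ as $x_{\etil_i(\lambda)}\sum_w q_w^{-1}T_w$ over minimal coset representatives, and match coefficients; your treatment of the interior cases (annihilation of the $\bF_{-i-1}$-summand on the fundamental window, the weight $q^{\lambda_{0,i+1}-k-1}$, the ascending chains with no $P^{(i)}_\pm$-correction since $Z_f=1$) is correct and agrees with what the paper leaves as ``direct computation.''

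The genuine gap is in the boundary cases $\be_r$ and $\bbf_0$, which you dismiss as ``analogous (but simpler)'' — you have the difficulty inverted. For $\mu=\etil_r(\lambda)$ the block of $W_\lambda$ on positions $\lambda_{0,r}+1,\ldots,d$ is a \emph{finite type C} group (it contains $s_d$ whenever $\lambda_{r+1}>0$), so $(W_\mu\cap W_\lambda)\backslash W_\lambda$ has $2\lambda_{r+1}$ cosets, not $\lambda_{r+1}$: besides your ascending chains there are the representatives $s_{\lambda_{0,r}+1}\cdots s_{d-1}s_ds_{d-1}\cdots s_{\lambda_{0,r}+k}$ passing through the affine reflection. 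Your decomposition $T_{W_\mu W_\lambda}=x_\mu\sum_k q^{-k}T_{w_k}$ therefore fails here on two counts: half the terms are missing, and the weights must be $q_w^{-1}$ with $q_{s_d}=q_0^{-1}$ (the paper's long representatives carry $q^{-(2\lambda_{r+1}-k-1)}q_0$). Evaluating $M_{\etil_r(\lambda)}T_w$ for these long elements is the crux of the paper's proof: one rewrites them via \eqref{eq:Td1} and the Bernstein--Lusztig relation \eqref{T0X1}, and the resulting terms \eqref{extra terms}--\eqref{other terms} carry corrections in $(q^{-1}-q)$ and $(q_0q_1^{-1}-1)$ whose telescoping against the $\bF_{-r-1}$-contribution of $\Psi(\be_r)M_\lambda$ (the terms with entry $r+2$, which \emph{do} arise since $r+1\equiv -r-1 \pmod n$) is a nontrivial identity — the paper's ``one checks'' step. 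Your auxiliary claim that iterating \eqref{Ti action} produces no $q$-correction also breaks here, since entries like $r+2=-r+n$ have $Z_f\neq 1$ and the $P^{(i)}_\pm$-terms (and \eqref{T0onV}-corrections) genuinely fire. By contrast, the case $\bbf_r$ you chose to detail is the easy one: only the $\bF_r$-branch acts on the window (with the scalar $q_0q_1^{-1}$), the relevant quotient has just $\lambda_r$ cosets, and no affine generator appears. So your proposal, as written, omits precisely the computation that constitutes the substance of the lemma — the $2\lambda_{r+1}$-coset analysis for $\be_r$ (and its mirror with $s_0$, $T_0$ and $2\lambda_0$ cosets for $\bbf_0$, which produces the prefactor $q_1q^{2(\lambda_0-1)}$).
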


\begin{proof}
The proof is by a direct computation. Below we present the details only for verifying the most complicated equation
\[
\Psi(\be_r) = \sum_{\lambda \in \Lambda_{n,d}} q^{3(\lambda_{r+1}-1)}q_0q_1^{-1} \phi^e_{\etil_{r}(\lambda),\lambda}.
\]

First, we compute $\Psi(\be_r)$. It suffices to compute $\Psi(\be_r)(M_{\lambda})$ for all $\lambda \in \Lambda_{n,d}$. Since $\be_r = \bE_r + q^{-1}\bF_{-r-1}\bK_r^{-1}$, we have
\begin{align}
\begin{split}
\Psi(\be_r)(M_{\lambda}) &= \Delta^{(d-1)}(\bE_r)M_\lambda + q^{-1-\lambda_r + \lambda_{r+1}}\Delta^{(d-1)}(\bF_{-r-1})M_\lambda \\
&= \sum_{k=1}^{\lambda_{r+1}} q^{\lambda_{r+1}-k} M_{(0^{\lambda_0},\ldots, r^{\lambda_r}, r+1^{k-1}, r, r+1^{\lambda_{r+1}-k})}\\
&+ q^{-1-\lambda_r + \lambda_{r+1}} \sum_{k=1}^{\lambda_{r+1}} q^{k-1} M_{(0^{\lambda_0}, \ldots, r^{\lambda_r}, r+1^{k-1}, r+2, r+1^{\lambda_{r+1}-k})}.
\end{split} \nonumber
\end{align}

Next, we calculate $\phi^e_{\etil_r(\lambda),\lambda}$. It suffices to compute $\phi^e_{\etil_r(\lambda),\lambda}(M_\lambda)$. By the definition of $\phi^e_{\etil_r(\lambda),\lambda}$, it follows that
\begin{align}
\begin{split}
\phi^e_{\etil_r(\lambda),\lambda}(M_\lambda) &= \sum_{w \in W_\lambda \cap \D_{\etil_r(\lambda)}} q_w^{-1} M_{\etil_r(\lambda)} T_w.
\end{split} \nonumber
\end{align}
Note that
$W_\lambda \cap \D_{\etil_r(\lambda)} =
\{ s_{\lambda_{0,r} + 1} \cdots s_{\lambda_{0,r} + k-1}\}_{k= 1}^{\lambda_{r+1}}
\sqcup \{ s_{\lambda_{0,r} + 1} \cdots s_{d-1} s_d s_{d-1} \cdots s_{\lambda_{0,r} + k} \}_{k= 1}^{\lambda_{r+1}}
$.
Moreover, $s_{\lambda_{0,r} + 1} \cdots s_{\lambda_{0,r} + k-1}$ and $s_{\lambda_{0,r} + 1} \cdots s_{d-1} s_d s_{d-1} \cdots s_{\lambda_{0,r} + k}$ are reduced expressions for $1\leq k \leq \lambda_{r+1}$.
Hence, we have
\begin{align}
\begin{split}
\sum_{w \in W_\lambda \cap \D_{\etil_r(\lambda)}} q_w^{-1} M_{\etil_r(\lambda)} T_w &= \sum_{k=1}^{\lambda_{r+1}} q^{-k+1} M_{\etil_r(\lambda)} T_{\lambda_{0,r}+1} \cdots T_{\lambda_{0,r}+k-1} \\
&+ \sum_{k=1}^{\lambda_{r+1}} q^{-(2\lambda_{r+1}-k-1)}q_0 M_{\etil_r(\lambda)} T_{\lambda_{0,r}+1} \cdots T_{d-1}T_dT_{d-1} \cdots T_{\lambda_{0,r}+k}.
\end{split} \nonumber
\end{align}
It is easily verified that $M_{\etil_r(\lambda)} T_{\lambda_{0,r}+1} \cdots T_{\lambda_{0,r}+k-1} = M_{(0^{\lambda_0}, \ldots, r^{\lambda_r}, r+1^{k-1}, r, r+1^{\lambda_{r+1}-k})}$. In order to compute the other terms, we first note that, from \eqref{Td2},
\begin{align*}
T_{\lambda_{0,r}+1} &\cdots T_{d-1}T_dT_{d-1} \cdots T_{\lambda_{0,r}+k}
\\
&= q_0^{-1} T_{\lambda_{0,r}+1} \cdots T_{d-1}  (T_{d-1} \cdots T_1 X_1T_0^{-1} T_1^{-1} \cdots T_{d-1}^{-1})  T_{d-1} \cdots T_{\lambda_{0,r}+k}
\\
&= q_0^{-1} T_{\lambda_{0,r}+1} \cdots T_{d-1}  T_{d-1} \cdots T_1 X_1T_0^{-1} T_1^{-1} \cdots T_{\lambda_{0,r}+k-1}^{-1}.
\end{align*}
It then follows from \eqref{T0X1} that
\begin{align*}
&T_{\lambda_{0,r}+1} \cdots T_{d-1}T_dT_{d-1} \cdots T_{\lambda_{0,r}+k}
\\
&= q_0^{-1} T_{\lambda_{0,r}+1} \cdots T_{d-1}  T_{d-1} \cdots T_1
q_0q_1^{-1}(T_0X_1^{-1} -(q_0^{-1}q_1 - 1))
T_1^{-1} \cdots T_{\lambda_{0,r}+k-1}^{-1} \\
&= q_1^{-1} T_{\lambda_{0,r}+1} \cdots T_{d-1}  T_{d-1} \cdots T_1 T_0X_1^{-1} T_1^{-1} \cdots T_{\lambda_{0,r}+k-1}^{-1}
\\
&+ (q_1^{-1} - q_0^{-1}) T_{\lambda_{0,r}+1} \cdots T_{d-1}  T_{d-1} \cdots T_{\lambda_{0,r}+k}.
\end{align*}
Since for $a \leq b$ we have
$
T_a \cdots T_b T_b \cdots T_a = 1 + (q\inv - q) \sum_{l = a}^b T_a \cdots T_l \cdots T_a,
$
we obtain
\begin{align*}
&T_{\lambda_{0,r}+1} \cdots T_{d-1}T_dT_{d-1} \cdots T_{\lambda_{0,r}+k}
\\
&= q_1^{-1} \Bp{
1 + (q^{-1} - q)\sum_{l=1}^{{\lambda_{r+1}-1}}
T_{\lambda_{0,r}+1} \cdots T_{\lambda_{0,r}+l} \cdots T_{\lambda_{0,r}+1}
}
T_{\lambda_{0,r}} \cdots T_1 T_0X_1^{-1} T_1^{-1} \cdots T_{\lambda_{0,r}+k-1}^{-1}
\\
&+ (q_1^{-1} - q_0^{-1})
T_{\lambda_{0,r}+1} \cdots T_{\lambda_{0,r}+k-1}
\Bp{
 1 + (q^{-1} - q)
 \sum_{l=k}^{\lambda_{r+1}-1}
 T_{\lambda_{0,r}+k} \cdots  T_{\lambda_{0,r}+l}  \cdots T_{\lambda_{0,r}+k}
 }
\\
&= q_1^{-1} T_{\lambda_{0,r}} \cdots T_1 T_0X_1^{-1} T_1^{-1} \cdots T_{\lambda_{0,r}+k-1}^{-1}
\\
&+q_1^{-1}(q^{-1}-q) \sum_{l=1}^{{\lambda_{r+1}-1}} T_{\lambda_{0,r}+1} \cdots T_{\lambda_{0,r}+l-1} T_{\lambda_{0,r}+l} T_{\lambda_{0,r}+l-1} \cdots T_1 T_0X_1^{-1} T_1^{-1} \cdots T_{\lambda_{0,r}+k-1}^{-1}
\\
&+(q_1^{-1}-q_0^{-1})T_{\lambda_{0,r}+1} \cdots T_{\lambda_{0,r}+k-1}
\\
& +(q_1^{-1}-q_0^{-1})(q^{-1}-q)\sum_{l=k}^{\lambda_{r+1}-1}T_{\lambda_{0,r}+1} \cdots T_{\lambda_{0,r}+l-1} T_{\lambda_{0,r}+l} T_{\lambda_{0,r}+l-1} \cdots T_{\lambda_{0,r}+k}.
\end{align*}
Therefore, we need to compute
\begin{align}
\label{1st term}
&M_{\etil_r(\lambda)}T_{\lambda_{0,r}} \cdots T_1 T_0X_1^{-1} T_1^{-1} \cdots T_{\lambda_{0,r}+k-1}^{-1},
\\
\label{extra terms}
&M_{\etil_r(\lambda)} T_{\lambda_{0,r}+1} \cdots T_{\lambda_{0,r}+l-1} T_{\lambda_{0,r}+l} T_{\lambda_{0,r}+l-1} \cdots T_{\lambda_{0,r}+k},
\\
\label{other terms}
&M_{\etil_r(\lambda)} T_{\lambda_{0,r}+1} \cdots T_{\lambda_{0,r}+l-1} T_{\lambda_{0,r}+l} T_{\lambda_{0,r}+l-1} \cdots T_1 T_0X_1^{-1} T_1^{-1} \cdots T_{\lambda_{0,r}+k-1}^{-1},
\end{align}
which are given as below:
\begin{align}
\begin{split}
\eqref{1st term} &= M_{\etil_r(\lambda)}T_{\lambda_{0,r}} \cdots T_1 T_0 T_1 \cdots T_{\lambda_{0,r}+k-1} X_{\lambda_{0,r}+k}^{-1}
= q^{-\lambda_r} M_{0^{\lambda_0},\ldots,r^{\lambda_r},r+1^{k-1},-r+n,r+1^{\lambda_{r+1}-k}},
\\
\eqref{extra terms} &= M_{(0^{\lambda_0},\ldots,r^{\lambda_r},r+1^l,r,r+1^{\lambda_{r+1}-l-1})} T_{\lambda_{0,r}+l-1} \cdots T_{\lambda_{0,r}+k}
= q^{-(l-k)} M_{(0^{\lambda_0},\ldots,r^{\lambda_r},r+1^l,r,r+1^{\lambda_{r+1}-l-1})},
\\
\eqref{other terms} &= M_{(0^{\lambda_0},\ldots,r^{\lambda_r},r+1^l,r,r+1^{\lambda_{r+1}-l-1})} T_{\lambda_{0,r}+l-1} \cdots T_0 X_1^{-1} T_1^{-1} \cdots T_{\lambda_{0,r}+k-1} \\
&= q^{-(l-1)} M_{(r+1,0^{\lambda_0},\ldots,r^{\lambda_r},r+1^{l-1},r,r+1^{\lambda_{r+1}-l-1})} T_0X_1^{-1} T_1^{-1} \cdots T_{\lambda_{0,r}+k-1}
\\
&= q^{-(l-1)}q_0^{-1}q_1 M_{(r+1-n,0^{\lambda_0},\ldots,r^{\lambda_r},r+1^{l-1},r,r+1^{\lambda_{r+1}-l-1})} X_1^{-1} T_1^{-1} \cdots T_{\lambda_{0,r}+k-1}
\\
&= \begin{cases}
q^{k-l-1}q_0^{-1}q_1 M_{(0^{\lambda_0}, \ldots, r^{\lambda_r}, r+1^{l-1}, r, r+1^{\lambda_{r+1}-l})} &\tif 1\leq l \leq k-1;
\\
q^{k-l}q_0^{-1}q_1 M_{(0^{\lambda_0}, \ldots, r^{\lambda_r}, r+1^{l}, r, r+1^{\lambda_{r+1}-l-1})} &\tif k\leq l \leq \lambda_{r+1}-1.
\end{cases}
\end{split} \nonumber
\end{align}
\allowdisplaybreaks
Summarizing the calculations above, we have
\begin{align*}
\phi^e_{\etil_r(\lambda),\lambda}(M_\lambda)
&= \sum_{k = 1}^{\lambda_{r+1}}
q^{-k+1}
M_{(0^{\lambda_0}, \ldots, r^{\lambda_r}, r+1^{k-1}, r, r+1^{\lambda_{r+1}-k})}
\\
&+ \sum_{k=1}^{\lambda_{r+1}}
q^{-2\lambda_{r+1}-\lambda_r+k+1}q_0q_1^{-1}
M_{(0^{\lambda_0}, \ldots, r^{\lambda_r}, r+1^{k-1}, -r+n, r+1^{\lambda_{r+1}-k})}
\\
&+\sum_{k=1}^{\lambda_{r+1}}
q^{-2\lambda_{r+1}+k+1}(q^{-1}-q) \cdot \\
&\Bp{
\sum_{l = 1}^{k-1}
q^{k-l-1}
M_{(0^{\lambda_0}, \ldots, r^{\lambda_r}, r+1^{l-1}, r, r+1^{\lambda_{r+1}-l})}
+ \sum_{l = k}^{\lambda_{r+1}-1}
q^{k-l}
M_{(0^{\lambda_0}, \ldots, r^{\lambda_r}, r+1^l, r, r+1^{\lambda_{r+1}-l-1})} }
\\
&+ \sum_{k=1}^{\lambda_{r+1}} q^{-2\lambda_{r+1}+k+1}(q_0q_1^{-1}-1) M_{(0^{\lambda_0},\ldots,r^{\lambda_r},r+1^{k-1},r,r+1^{\lambda_{r+1}-k})}
\\
&+ \sum_{k=1}^{\lambda_{r+1}} q^{-2\lambda_{r+1}+k+1}(q_0q_1^{-1}-1)(q^{-1}-q) \sum_{l=k}^{\lambda_{r+1}-1} q^{-l+k} M_{(0^{\lambda_0},\ldots,r^{\lambda_r},r+1^l,r,r+1^{\lambda_{r+1}-l-1})}.
\end{align*}
The coefficient for $M_{(0^{\lambda_0}, \ldots, r^{\lambda_r}, r+1^{k-1}, -r+n, r+1^{\lambda_{r+1}-k})}$ is
$q^{-3(\lambda_{r+1}-1)}q_0q_1^{-1} q^{-1-\lambda_r+\lambda_{r+1}+k-1}$
while that for $M_{(0^{\lambda_0}, \ldots, r^{\lambda_r}, r+1^{k-1}, r, r+1^{\lambda_{r+1}-k})}$ is
\begin{align}
\begin{split}
&q^{-k+1} + q^{-2\lambda_{r+1}+k+1}(q_0q_1^{-1}-1) + \sum_{m=k+1}^{\lambda_{r+1}} q^{-2\lambda_{r+1}-k+2m}(q^{-1}-q)\\ &+\sum_{m=1}^{k-1} q^{-2\lambda_{r+1}-k+2+2m}(q^{-1}-q) + \sum_{m=1}^{k-1} q^{-2\lambda_{r+1}-k+2+2m}(q^{-1}-q)(q_0q_1^{-1}-1).
\end{split} \nonumber
\end{align}
One checks that the latter coincides with $q^{-2\lambda_{r+1}-k+3}q_0q_1^{-1} = q^{-3(\lambda_{r+1}-1)}q_0q_1^{-1}  q^{\lambda_{r+1}-k}$.
Thus, we obtain
\begin{align}
\begin{split}
\phi^e_{\etil_r(\lambda),\lambda}(M_\lambda) = q^{-3(\lambda_{r+1}-1)}q_0q_1^{-1} \Psi(\be_r)(M_\lambda),
\end{split} \nonumber
\end{align}
which proves the assertion.
\end{proof}

\begin{prop}\label{generator of bSjj}
Assume $r\geq d$.
Then the Schur algebra $\bSjj$ is generated by $\Psi(\be_i)$, $\Psi(\bbf_i)$, $\Psi(\bh_a^{\pm 1})$ for $0 \leq i \leq r$ and $0 \leq a \leq r+1$.
\end{prop}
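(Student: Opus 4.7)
The plan is to combine Lemmas \ref{xi(h_a)} and \ref{xi(e_i)} with a combinatorial argument in the spirit of Beilinson--Lusztig--MacPherson to show that every standard basis element $\phi^g_{\lambda,\mu}$ of $\bSjj$ lies in the subalgebra $A$ generated by $\Psi(\be_i), \Psi(\bbf_i), \Psi(\bh_a^{\pm 1})$ (for $0 \leq i \leq r$ and $0 \leq a \leq r+1$).

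First, Lemma \ref{xi(h_a)} places every diagonal idempotent $\phi^e_{\lambda,\lambda}$ into $A$. These idempotents are mutually orthogonal and satisfy the absorbing property $\phi^e_{\lambda,\lambda} \phi^g_{\nu,\nu'} \phi^e_{\mu,\mu} = \delta_{\lambda,\nu} \delta_{\mu,\nu'}\, \phi^g_{\nu,\nu'}$ on the standard basis. Applying this to the explicit expressions in Lemma~\ref{xi(e_i)}, for each $\lambda \in \Lambda_{n,d}$ and each $0 \leq i \leq r$ with $\lambda_{i+1} > 0$, the product $\phi^e_{\etil_i(\lambda),\etil_i(\lambda)} \cdot \Psi(\be_i) \cdot \phi^e_{\lambda,\lambda}$ collapses to a nonzero scalar multiple of $\phi^e_{\etil_i(\lambda),\lambda}$; the analogous product with $\Psi(\bbf_i)$ yields a nonzero scalar multiple of $\phi^e_{\ftil_i(\lambda),\lambda}$ when $\lambda_i > 0$. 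Consequently, all elementary basis elements of the form $\phi^e_{\etil_i(\lambda),\lambda}$ and $\phi^e_{\ftil_i(\lambda),\lambda}$ lie in $A$.

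It remains to show that the elementary morphisms together with the idempotents generate every $\phi^g_{\lambda,\mu}$ with $g \in \D_{\lambda,\mu}$ arbitrary. The assumption $r \geq d$ is used crucially: it guarantees that $\omega = (0, 1^d, 0^{r-d+1}) \in \Lambda_{n,d}$ satisfies $W_\omega = \{1\}$, so the corner subalgebra $\phi^e_{\omega,\omega} \bSjj \phi^e_{\omega,\omega}$ is isomorphic to $\bbH$ via Lemma~\ref{lem:iso}. I would then proceed by induction on a Bruhat-like partial order on $\D_{\lambda,\mu}$, establishing a multiplication formula of the shape
\[
\phi^e_{\etil_i(\nu),\nu} \cdot \phi^g_{\nu,\mu} = \phi^{g}_{\etil_i(\nu), \mu} + \textup{(strictly lower-order terms)},
\]
together with its counterpart for $\phi^e_{\ftil_i(\nu),\nu}$. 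Iterating builds up the ``horizontal/vertical'' elementary maps $\phi^e_{\lambda,\omega}$ and $\phi^e_{\omega,\mu}$ from the generators; composing these with elements of the corner algebra at $\omega$ produces every $\phi^g_{\lambda,\mu}$ modulo lower-order corrections, and inverting the resulting triangular system places each $\phi^g_{\lambda,\mu}$ in $A$.

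The main obstacle is establishing the multiplication formula with correct leading coefficients and identifiable lower-order corrections. The three-parameter Hecke structure, together with the boundary generators $\be_0, \bbf_0, \be_r, \bbf_r$ (whose formulas in Lemma~\ref{xi(e_i)} carry nontrivial $q_0, q_1$ factors), requires careful bookkeeping whenever the double coset representative $g$ crosses the affine Dynkin boundary via $s_0$ or $s_d$. The companion papers \cite{FL3Wa, FL3Wb} develop analogous multiplication formulas in closely related equal-parameter Schur algebra settings, and the approach here would adapt those computations to the present three-parameter framework, with the lengthy case analysis in the proof of Lemma~\ref{xi(e_i)} serving as a prototype for handling the boundary terms.
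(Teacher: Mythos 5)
Your skeleton matches the paper's at the outset (diagonal idempotents from Lemma \ref{xi(h_a)}, the elementary maps $\phi^e_{\etil_i(\lambda),\lambda}$ and $\phi^e_{\ftil_i(\lambda),\lambda}$ extracted from Lemma \ref{xi(e_i)}, and the special role of the corner at $\omega$ under $r\geq d$), but the step you yourself flag as ``the main obstacle'' --- a BLM-style multiplication formula with unidentified lower-order terms, plus induction on a Bruhat-like order and inversion of a triangular system --- is exactly the part you leave unproven, and it is also unnecessary. The paper's proof needs no such formula: one chooses the sequence $(x_1,\ldots,x_l)$ of $\etil_i,\ftil_i$'s taking $\omega$ to $\lambda$ so that the parabolic subgroups \emph{increase} along the chain, $\{e\}=W_{\omega}\subset W_{\lambda_1}\subset\cdots\subset W_{\lambda}$. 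Whenever $W_{\nu}\subseteq W_{\nu'}$ one has $\phi^e_{\nu',\nu}(x_{\nu})=T_{W_{\nu'}eW_{\nu}}=x_{\nu'}$ on the nose, so the composite of elementary maps along such a chain equals $\phi^e_{\lambda,\omega}$ \emph{exactly}, with no lower-order corrections; dually $\phi^e_{\omega,\lambda}$ lies in your subalgebra $A$. Since your remaining argument rests entirely on a formula stated only ``of the shape'' and deferred to an adaptation of \cite{FL3Wa,FL3Wb}, the proposal is a sketch rather than a proof at the decisive point.

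A second, sharper gap: you never place the corner algebra $\phi^e_{\omega,\omega}\bSjj\phi^e_{\omega,\omega}$ inside $A$ --- you only note it is isomorphic to a copy of $\bbH$. Composing the maps $\phi^e_{\lambda,\omega},\phi^e_{\omega,\mu}$ ``with elements of the corner algebra'' proves nothing about membership in $A$ unless those corner elements are themselves shown to lie in $A$. The paper supplies this by explicit products: $\phi^e_{\omega,\etil_i(\omega)}\cdot\phi^e_{\etil_i(\omega),\omega}=\phi^e_{\omega,\omega}+q_{s_i}^{-1}\phi^{s_i}_{\omega,\omega}$ for $0\leq i\leq d-1$, and a companion product obtained by moving a unit into the last slot (this is the delicate boundary case, using $\bbf_r$ with its $q_0q_1^{-1}$ factor) yielding $\phi^e_{\omega,\omega}+\phi^{s_d}_{\omega,\omega}$; since $\phi^e_{\omega,\omega}\in A$, all $\phi^{s_i}_{\omega,\omega}$ with $0\leq i\leq d$ lie in $A$, and these generate the corner algebra. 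Finally, producing each basis element $\phi^g_{\lambda,\mu}$ individually, as you propose, is avoidable: over the field $\cR$ one has $\bSjj=\bigoplus_{\lambda,\mu}\phi^e_{\lambda,\omega}\,\bSjj\,\phi^e_{\omega,\mu}=\bigoplus_{\lambda,\mu}\phi^e_{\lambda,\omega}\,\phi^e_{\omega,\omega}\bSjj\phi^e_{\omega,\omega}\,\phi^e_{\omega,\mu}$, so containment of the corner algebra together with the maps $\phi^e_{\lambda,\omega}$ and $\phi^e_{\omega,\mu}$ already forces $A=\bSjj$. To repair your writeup, replace the conjectural multiplication formula and triangular inversion with the increasing-parabolic chain argument, and add the explicit corner computations for $\phi^{s_i}_{\omega,\omega}$, $0\leq i\leq d$.
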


\begin{proof}
Let $S$ denote the subalgebra of $\bSjj$ generated by $\Psi(\be_i), \Psi(\bbf_i), \Psi(\bh_a^{\pm 1})$,  for $0 \leq i \leq r$, and for $0 \leq a \leq r+1$. By Lemma \ref{xi(h_a)} and \ref{xi(e_i)}, for each $\lambda \in \Lambda_{n,d}$ and $0\leq i \leq r$, we have $\phi^e_{\etil_i(\lambda),\lambda} \in S$ and $\phi^e_{\ftil_i(\lambda),\lambda} \in S$.

Take $\lambda \in \Lambda_{n,d}$ arbitrarily. It is easy to check that there exists a sequence $(x_1,\ldots,x_l)$ of $\etil_i, \ftil_i$'s such that $\lambda = x_1 \cdots x_l(\omega)$ and $\{ e \} = W_{\lambda_0} \subset \cdots \subset W_{\lambda_l} = W_{\lambda}$, where $\lambda_0 = \omega$ and $\lambda_{i} = x_i(\lambda_{i-1})$. Then, we have
$$\phi^e_{\lambda,\omega} = \phi^e_{\lambda_r,\lambda_{r-1}} \cdots \phi^e_{\lambda_1,\lambda_0} \in S.$$
By the same way, we obtain $\phi^e_{\omega,\lambda} \in S$.

Next, for $0\leq i \leq {d}-1$, we have
\[
\phi^{e}_{\omega,\etil_i(\omega)} \cdot \phi^e_{\etil_i(\omega),\omega}
= \phi^{{e}}_{\omega,\omega} + q_{s_i}^{-1}\phi^{s_i}_{\omega,\omega}.
\]
We also have
\[
\phi^e_{\omega,\ftil_r(\omega)} \cdot \phi^e_{\ftil_r(\omega),\omega}
= \phi^{{e}}_{\omega,\omega} + \phi^{s_d}_{\omega,\omega}.
\]
These show that $\phi^{s_i}_{\omega,\omega} \in S$ for $0 \leq i \leq {d}$. Since $\phi^e_{\omega,\omega} \bSjj \phi^e_{\omega,\omega}$ is generated by $\phi^{s_i}_{\omega,\omega} \in S$, $0 \leq i \leq r$, we have $\phi^e_{\omega,\omega} \bSjj \phi^e_{\omega,\omega} \subset S$.

Finally, for each $\lambda, \mu \in \Lambda_{n,d}$, we have $\phi^e_{\lambda,\omega} \bSjj \phi^e_{\omega,\mu} = \phi^e_{\lambda, \omega} \phi^e_{\omega,\omega} \bSjj \phi^e_{\omega,\omega} \phi^e_{\omega,\mu} \subset S$. Since $\bSjj$ is the direct sum of $\phi^e_{\lambda,\omega} \bSjj \phi^e_{\omega,\mu}$, we conclude that $S = \bSjj$.
\end{proof}

%
%

\begin{theorem}
 \label{thm:3-par}
Suppose $r \geq d\geq 1$.
We have the following Schur $(\UUC, \bbH)$-duality:
\begin{align*}
\Psi (\UUC) \simeq & \End_{\bbH} (\mathbb{V}^{\otimes d}),
\\
& \End_{\UUC} (\mathbb{V}^{\otimes d})  \simeq \bbH^{op}.
\end{align*}
\end{theorem}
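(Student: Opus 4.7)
The plan is to combine the algebraic structural results established above (Propositions \ref{commute} and \ref{generator of bSjj} and Lemma \ref{lem:iso}) with a Vandermonde-type extraction and a standard double-centralizer argument. First, Proposition \ref{commute} together with Lemma \ref{lem:iso} yields a well-defined algebra homomorphism $\Psi \colon \UUC \to \bSjj \simeq \End_\bbH(\VV^{\otimes d})$. For the first isomorphism it suffices to show surjectivity. By Proposition \ref{generator of bSjj}, $\bSjj$ is generated by $\Psi(\be_i), \Psi(\bbf_i)$ and $\Psi(\bh_a^{\pm 1})$, so the task reduces to showing each $\Psi(\bh_a^{\pm 1})$ belongs to $\Psi(\UUC)$, even though $\UUC$ only contains the ratios $\bk_i = \bh_i \bh_{i+1}^{-1}$.

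For this, Lemma \ref{UconV} shows that $\Psi(\bh_a)$ acts diagonally on the weight decomposition of $\VV^{\otimes d}$, with eigenvalue on the weight-$\lambda$ subspace equal to $q^{2\lambda_0}, q^{\lambda_a}, q^{2\lambda_{r+1}}$ for $a=0$, $1 \le a \le r$, $a=r+1$ respectively. Consequently $\Psi(\bk_i)$ acts on this subspace as $q^{2\lambda_0-\lambda_1}$, $q^{\lambda_i-\lambda_{i+1}}$, $q^{\lambda_r-2\lambda_{r+1}}$ for $i=0$, $1 \le i \le r-1$, $i=r$ respectively. A direct linear-algebra check shows that combined with $\sum_a \lambda_a = d$ these $r+2$ equations determine $\lambda \in \Ld_{n,d}$ uniquely; hence the assignment $\lambda \mapsto (\text{eigenvalues of }\Psi(\bk_0), \ldots, \Psi(\bk_r))$ is injective. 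A multivariate Lagrange interpolation in the pairwise commuting operators $\Psi(\bk_i^{\pm 1})$ then realizes each diagonal idempotent $\phi^e_{\lambda,\lambda}$ as a polynomial in the $\Psi(\bk_i^{\pm 1})$, so $\phi^e_{\lambda,\lambda} \in \Psi(\UUC)$ for every $\lambda$. Since $\Psi(\bh_a^{\pm 1})$ is a scalar combination of these $\phi^e_{\lambda,\lambda}$, we conclude $\Psi(\UUC) = \bSjj$, giving the first isomorphism.

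For the second isomorphism, transferring via Lemma \ref{lem:iso} reduces the claim to $\End_{\bSjj}(\bTjj) \simeq \bbH^{\mrm{op}}$. Under the hypothesis $r \ge d$, the composition $\omega = (0, 1^d, 0^{r-d}, 0)$ has trivial parabolic $W_\omega = \{e\}$, so $x_\omega = 1$ and $\bbH = x_\omega \bbH$ occurs as a direct summand of $\bTjj$; this makes $\bTjj$ a generator of $\bbH$-Mod. Moreover each $x_\lambda$ satisfies $x_\lambda^2 = \bigl(\sum_{w \in W_\lambda} q_w^{-2}\bigr) x_\lambda$ with nonzero scalar in $\cR = \QQ(q,q_0,q_1)$ (the $q,q_0,q_1$ being algebraically independent over $\QQ$), so every $x_\lambda \bbH$ is projective and $\bTjj$ is a progenerator. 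The classical Morita theorem then gives $\End_{\End_\bbH(\bTjj)}(\bTjj) \simeq \bbH^{\mrm{op}}$, and combining with $\End_\bbH(\bTjj) \simeq \bSjj = \Psi(\UUC)$ yields $\End_{\UUC}(\VV^{\otimes d}) \simeq \bbH^{\mrm{op}}$.

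The main technical point is the Vandermonde step: although $\UUC$ has one fewer Cartan generator than $\UUCg$, the fact that $\VV^{\otimes d}$ lives in the fixed-total-degree sector makes the tuple $(\Psi(\bk_0), \ldots, \Psi(\bk_r))$ separate all of $\Ld_{n,d}$, which is exactly what is needed to upgrade Proposition \ref{generator of bSjj} from $\UUCg$ to $\UUC$. Once this observation is in place, everything else is formal.
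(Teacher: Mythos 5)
Your proof is correct, and its skeleton matches the paper's: the first isomorphism comes from Proposition \ref{generator of bSjj} combined with Lemma \ref{lem:iso}, and the second from a double-centralizer argument hinging on $W_\omega=\{e\}$, hence $x_\omega=1$, which is exactly where $r\geq d$ enters. You diverge in two worthwhile ways. First, you explicitly patch a point the paper elides: Proposition \ref{generator of bSjj} generates $\bSjj$ using $\Psi(\bh_a^{\pm 1})$, but $\bh_a$ lives in $\UUCg$ rather than $\UUC$, so deducing $\Psi(\UUC)=\bSjj$ really does require recovering the idempotents $\phi^e_{\lambda,\lambda}$ from the $\Psi(\bk_i^{\pm 1})$ alone; your observation that $\lambda\mapsto(2\lambda_0-\lambda_1,\lambda_1-\lambda_2,\ldots,\lambda_r-2\lambda_{r+1})$ is injective on $\Ld_{n,d}$ (the homogeneous kernel $(t,2t,\ldots,2t,t)$ is killed by $\sum_a\lambda_a=d$ since $n t=0$ forces $t=0$) is precisely the $\bk$-version of the Vandermonde argument that the paper's Lemma \ref{xi(h_a)} only asserts for the $\bh_a$'s, and your interpolation step is sound. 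Second, for $\End_{\UUC}(\VV^{\otimes d})\simeq\bbH^{op}$ the paper computes through the idempotent corner, $\End_{\bSjj}(\bTjj)\simeq(\phi^e_{\omega,\omega}\bSjj\phi^e_{\omega,\omega})^{op}\simeq\bbH^{op}$, whereas you invoke Morita theory with $\bTjj$ a progenerator; both rest on the same fact that $x_\omega\bbH=\bbH$ is a direct summand of $\bTjj$, and indeed being a generator already makes $\bTjj$ balanced, so your projectivity claim is dispensable. That is just as well, because your quasi-idempotency scalar is slightly off: by Lemma \ref{xlm} one has $x_\lambda T_i=p_{s_i}x_\lambda$ with $p_{s_0}=q_0^{-1}$ while $q_{s_0}=q_1$ (and $p_{s_d}=q_1^{-1}$ while $q_{s_d}=q_0^{-1}$), so $x_\lambda^2=\bigl(\sum_{w\in W_\lambda}q_w^{-1}p_w\bigr)x_\lambda$ rather than $\sum_{w}q_w^{-2}\,x_\lambda$; the scalar is still nonzero (specializing $q=q_0=q_1=1$ gives $|W_\lambda|$), so projectivity survives, but the formula as written does not. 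Net effect: your route is essentially the paper's for surjectivity, with a genuine (and welcome) extra justification at the $\bh$-versus-$\bk$ step, and a more abstract but equivalent Morita-theoretic finish in place of the paper's explicit corner-algebra computation.
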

\noindent (To be consistent with the variants in next section, we can refer to this as $\jjw$-Schur duality.)

\begin{proof}
It follows by Proposition \ref{generator of bSjj} that $\Psi (\UUC) =\bSjj$. Hence the first isomorphism follows by Lemma~\ref{lem:iso}.

Since $\Psi (\UUC) =\bSjj$ induces an isomorphism $\End_{\mathbb{U}^{\mathfrak{c}}(\mathfrak{\widehat{sl}}_n)}(\mathbb{V}^{\otimes d}) \simeq \End_{\kappa\bSjj\kappa^{-1}}(\mathbb{V}^{\otimes d})$, we have
\[
\End_{\mathbb{U}^{\mathfrak{c}}(\mathfrak{\widehat{sl}}_n)}(\mathbb{V}^{\otimes d})
\simeq \End_{\kappa\bSjj\kappa^{-1}}(\mathbb{V}^{\otimes d})
\simeq \End_{\bSjj}(\bTjj)
\simeq (\phi^e_{\omega,\omega} \bSjj \phi^e_{\omega,\omega})^{op} \simeq \bbH^{op}.
\]
The second isomorphism follows.
\end{proof}

\subsection{Specializations}
 \label{sec:special}

When specializing $\bbH$ to the single parameter case by letting $q_0=1$ and $q_1=q^2$, we obtain the affine Hecke algebra of type C over $\QQ(q)$, denoted here by $\bH_{C_d}$. This is the Hecke algebra appearing in \cite{FL3Wa}-\cite{FL3Wb}.

When specializing $\bbH$ to $q_0=q_1$, we obtain the extended affine Hecke algebra of type B over $\QQ(q,q_1)$ in 2 parameters $q, q_1$.
When specializing $\bbH$ to the single parameter case by letting $q_0=q_1=q$, we obtain the extended affine Hecke algebra of type B over $\QQ(q)$, denoted here by $\bH_{B_d}$.

When specializing $\bbH$ to $q_0=q_1 =1$, we obtain the extended affine Hecke algebra of type D over $\QQ(q)$, denoted here by $\bH_{D_d}$.

Specializing our main Theorem~\ref{thm:3-par} on the 3-parameter Schur duality to 2-parameter or 1-parameter cases, we obtain several versions of dualities, each of which is meaningful in its own way. In this sense, the duality in Theorem~\ref{thm:3-par} is a master duality which unifies dualities of different types (among which the 1-parameter dualities should admit geometric interpretations using different types of flags).

The framework in \cite{FL3Wa} provides a geometric setting for the $(\UUC|_{q_0=1,q_1=q^2}, \bH_{C_d})$-duality on $\mathbb{V}|_{q_0=1,q_1=q^2}^{\otimes d}$.  Both $\UUC|_{q_0=1, q_1=q^2}$ and $\bH_{C_d}$ are geometrically realized; while not discussed explicitly therein, $\mathbb{V}|_{q_0=1,q_1=q^2}^{\otimes d}$ can also be geometrically realized in terms of varieties of pairs of an ``$n$-step" partial flag and a complete flag.

\begin{rem}
Our work can lead to several interesting future projects, which are highly nontrivial to carry out. One bonus of carrying out these geometric constructions will be the positivity of the resulting $\imath$canonical bases.

\begin{enumerate}
\item
A geometric setting in flag variety of affine type B similar to \cite{FL3Wa}  for the $(\UUC|_{q_0=q_1=q}, \bH_{B_d})$-duality on $\mathbb{V}|_{q_0=q_1=q}^{\otimes d}$ is expected.
\item
A geometric setting in flag variety of affine type D similar to \cite{FL3Wa}  for the $(\UUC|_{q_0=q_1=1}, \bH_{D_d})$-duality on $\mathbb{V}|_{q_0=q_1=1}^{\otimes d}$ is expected. The finite type version of this duality would be a modification of the construction in \cite{FL15}.
\item
The algebraic construction in \cite{FL3Wb} is expected to generalize to the 3-parameter case or various 2-parameter or equal parameter specializations.
\item
Classify the finite-dimensional irreducible $\UUC$-modules.
\item
All remarks in \S\ref{sec:special} here are valid for the variants of Schur $(\UUC, \bbH)$-duality considered in Section~\ref{sec:var} below.
\end{enumerate}
\end{rem}

\section{Variants of Schur dualities}\label{sec:var}

Motivated by \cite{FL3Wa}--\cite{FL3Wb}, we formulate in this section several variants of the Schur $(\UUC, \bbH)$-duality in Theorem~\ref{thm:3-par}. We continue to assume $r\ge d \ge 1$. Furthermore we set
\[
\nn =n-1 =2r+1, \qquad \eta =n-2 =2r.
\]

\subsection{The $\jiw$-Schur duality}
 \label{sec:ji}

Let ${}'\mathbb{V}_{\nn}$ be the $\cR$-subspace of $\mathbb{V}$ spanned by $v_i$, for $i\in \ZZ$ such that  $i \not\equiv r+1 \pmod n$.
Note that  ${}'\mathbb{V}_{\nn}$ is naturally an $\bbH$-submodule of $\mathbb{V}$, and moreover, it is a direct sum of permutation modules.

We consider an isomorphic copy of $\UUAijg$ (with a different indexing set for generators), denoted by $\UUAjig$. The algebra $\UUAjig$ is generated by $\bE_i, \bF_i$
$(i \in [0,n-1]\backslash \{r+1\}),  \bD_a^{\pm1}$ $(a \in [0,{n-1}]\backslash \{r+1\});$
here we regard indices $r, r+2$ adjacent. Denote by $\UUAji$ the subalgebra of $\UUAjig$  {generated by $\bE_i, \bF_i, \bK_i$ $(i \in [0,n-1]\backslash \{r+1\})$, where $\bK_{r}=\bD_{r}\bD_{r+2}^{-1}, \bK_i=\bD_{i}\bD_{i+1}^{-1}$ ($i\neq r$).}
Then ${}'\mathbb{V}_{\nn}$ is a natural representation of $\UUAjig$, with the action given by: for $i \in [0,n-1]\backslash \{r+1\},  a \in [0,n]\backslash \{r+1\}$,
\eq\label{actionUji}
\bE_{i}v_{j+1}=
\bc{v_{j}&\tif j\equiv i \neq r;
\\
v_{j-1}&\tif j-1\equiv i = r;
\\
0&\text{else},}
\;
\bF_{i}v_{j}=\bc{
v_{j+1} &\tif j\equiv i \neq r;
\\
v_{j+2} &\tif j\equiv i =r;
\\
0 &\text{else},
}
\;
\bD_{a}v_{j}=
\bc{q v_{j} &\tif j\equiv a;
\\
v_{j} &\text{else}.
}
\endeq
Then $\UUAjig$ and $\UUAji$ act on ${}'\mathbb{V}_{\nn}^{\otimes d}$ via iterated comultiplication.

For $i, j \in [0, r]$, we denote the Cartan integers by
\begin{equation}
  \label{aij}
  \texttt{c}_{ij} = 2 \delta_{ij} - \delta_{i, j+1} - \delta_{i, j-1}.
\end{equation}

Define $\UUji$ (cf. \cite[Chapter 7]{FL3Wa}) to be the $\cR$-algebra generated by $\eji_i, \fji_i$, and $\kji^{\pm 1}_i$ $(0 \le i\le r-1)$ and $\tji_r$, subject to the following relations: for all $0 \le i,j \le r-1$,
\begin{align*}
& \kji_0  (\kji^2_1 \cdots \kji^2_{r-1} )   = q^{-1}, 
\quad
\kji_i \kji_i^{-1}  = 1, \quad
\kji_i \kji_j   = \kji_j \kji_i, \quad
\kji_i \tji_r = \tji_r \kji_i,  \\
&\kji_i \eji_j \kji_i^{-1}=  q^{\texttt{c}_{ij} + \delta_{i,0} \delta_{j,0}}  \eji_j ,
\quad
\kji_i \fji_j \kji_i^{-1}  =  q^{-\texttt{c}_{ij} - \delta_{i,0} \delta_{j,0} }  \fji_j , \\
&\eji_i \eji_j  = \eji_j \eji_i , \quad
\fji_i \fji_j  = \fji_j \fji_i , \quad \forall |i-j|> 1, \\
&\eji_i \fji_j - \fji_j \eji_i  = \delta_{ij} \frac{\kji_i - \kji_i^{-1}}{q - q^{-1}} , \quad \forall (i, j) \neq (0,0),
\\
&\eji_i \tji_r  = \tji_r \eji_i, \quad
\fji_i \tji_r  = \tji_r \fji_i, \quad \forall i \leq r-2,
\\
&\eji^2_{r-1} \tji_r + \tji_r \eji^2_{r-1}  = (q + q^{-1}) \eji_{r-1} \tji_r \eji_{r-1},
\quad
\fji^2_{r-1} \tji_r + \tji_r \fji^2_{r-1}  = (q + q^{-1}) \fji_{r-1} \tji_r \fji_{r-1},
\\
&\tji_r^2 \eji_{r-1} + \eji_{r-1} \tji_r^2  = (q + q^{-1}) \tji_r \eji_{r-1} \tji_r + q_0 q_1^{-1}  \eji_{r-1},
\\
&\tji_r^2 \fji_{r-1} + \fji_{r-1} \tji_r^2  = (q + q^{-1}) \tji_r \fji_{r-1} \tji_r + q_0 q_1^{-1} \fji_{r-1},
%
\\
&\eji_i^2 \eji_j  + \eji_j \eji_i^2 = (q + q^{-1}) \eji_i \eji_j \eji_i,
\quad
\fji_i^2 \fji_j  + \fji_j \fji_i^2 = (q + q^{-1}) \fji_i \fji_j \fji_i, \quad \forall |i-j|=1,
\\
&\eji_0^2 \fji_0 + \fji_0 \eji_0^2
= (q + q^{-1}) \big( \eji_0 \fji_0 \eji_0 - q_1 q \eji_0 \kji_0 -q_0^{-1}  q^{-1} \eji_0 \kji_0^{-1} \big),
\\
&\fji_0^2 \eji_0   + \eji_0 \fji_0^2
 = (q + q^{-1}) \big ( \fji_0 \eji_0 \fji_0 - q q_1 \kji_0 \fji_0 - q_0^{-1} q^{-1} \kji_0^{-1} \fji_0 \big).\\
\end{align*}

\begin{prop}
 \label{prop:homji}
There is an injective $\cR$-algebra homomorphism
$\jiw: \UUji \rightarrow \UUAji$ such that
\bA{
\notag
&\bk_a \mapsto \bK_a\bK_{-a-1}^{-1},   \quad (0\leq a \le  r-1)
\\
\notag
&\be_i\mapsto \bE_i+\bF_{-i-1}\bK_i^{-1},
\quad
\bbf_i\mapsto \bE_{-i-1} + \bF_i\bK_{-i-1}^{-1},
\quad (1\leq i \leq r-1)
\\
\notag
&\be_0\mapsto\bE_0+q_0^{-1}\bF_{-1}\bK_{0}^{-1},
\quad
\bbf_0\mapsto \bE_{-1} + q_1q^{-1}\bF_0\bK_{-1}^{-1},
\\
& \label{embeding4ji}
\bt_r \mapsto \bE_r+qq_0q_1^{-1} \bF_r\bK_{r}^{-1}+(1-q_0q_1^{-1})/(q-q^{-1})\bK_{r}^{-1}.
}
\end{prop}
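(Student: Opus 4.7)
The strategy is to mirror the proof of Proposition~\ref{prop:homjj}: verify that the images prescribed in \eqref{embeding4ji} (together with the formulas for the other generators, which are identical to those in the $\jjw$ case) generate a quantum symmetric pair coideal subalgebra of $\UUAji$ in the sense of Letzter--Kolb, and then invoke \cite[Theorem~7.8]{Ko14} to conclude that the assignment extends to an injective $\cR$-algebra homomorphism from the abstractly presented $\UUji$. The underlying affine Satake datum is the Dynkin diagram of type $A^{(1)}_{2r}$ (since $\nn = 2r+1$) together with the diagram involution $i \mapsto -i \pmod{2r+1}$; on the cycle $0\text{--}1\text{--}\cdots\text{--}r\text{--}(r+2)\text{--}\cdots\text{--}(n-1)\text{--}0$ this involution fixes the node $r$ (sitting opposite the deleted node $r+1$) and pairs up the remaining $2r$ nodes. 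The appearance of this single fixed ``white'' node is what distinguishes the $\jiw$ case from the $\jjw$ case, and it is responsible for the extra scalar term $(1-q_0q_1^{-1})(q-q^{-1})^{-1}\bK_r^{-1}$ in the image of $\bt_r$: this term is exactly the Letzter parameter attached to a fixed node in Kolb's normalization.

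Concretely, I would first match the proposed images with the standard Letzter--Kolb generators $B_i$ for a specific admissible pair $(\mathbf{c},\mathbf{s})$: take $\mathbf{c}_i = 1$ for $1 \le i \le r-1$, $\mathbf{c}_0 = q_0^{-1}$, and $\mathbf{c}_r = qq_0q_1^{-1}$, while $\mathbf{s}_i = 0$ except at the fixed node, where $\mathbf{s}_r = (1-q_0q_1^{-1})(q-q^{-1})^{-1}$. Admissibility in the sense of \cite{Ko14} amounts to verifying a few numerical constraints at the fixed node $r$ and at the paired nodes adjacent to it, which is immediate. Since the formulas for $\eji_i, \fji_i, \kji_i$ with $0 \le i \le r-1$ coincide with those appearing in Proposition~\ref{prop:homjj}, the $q$-Cartan relations and $q$-Serre relations among these generators (including the deformed quadratic relation at node $0$ carrying the parameters $q_0$ and $q_1$) are inherited verbatim from the $\jjw$ case.

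The genuinely new computations concern $\bt_r$: one must match the two deformed relations $\eji_{r-1}^2 \bt_r + \bt_r \eji_{r-1}^2 = (q+q^{-1})\eji_{r-1}\bt_r\eji_{r-1}$ and $\bt_r^{2}\eji_{r-1} + \eji_{r-1}\bt_r^{2} = (q+q^{-1})\bt_r\eji_{r-1}\bt_r + q_0q_1^{-1}\eji_{r-1}$, together with their $\fji_{r-1}$-counterparts and the commutation $\kji_i \bt_r = \bt_r \kji_i$, against the relations in Kolb's presentation associated to an edge joining a non-fixed node $r-1$ to a fixed node $r$ of the Satake diagram (cf.~\cite[\S 7]{Ko14}). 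The coefficient $q_0q_1^{-1}$ on the right-hand side of the second relation is precisely what forces the value of $\mathbf{c}_r$ fixed above, and this parameter-matching is the main obstacle: it requires a careful bookkeeping of Kolb's normalization conventions (in particular the square-root choice implicit in $\mathbf{c}_r$), but once the dictionary is set, the desired relations follow from \cite[Theorem~7.8]{Ko14}. Injectivity is then automatic from the same theorem, which equips the coideal subalgebra with a PBW-type triangular decomposition whose dimension matches the abstract presentation of $\UUji$.
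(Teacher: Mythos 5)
Your proposal takes essentially the same route as the paper: the paper's proof likewise just observes that the images form a quantum symmetric pair coideal subalgebra of $\UUAji$ associated with the type $A^{(1)}_{2r}$ Dynkin diagram with the involution of Figure~\ref{figure:ji} and invokes \cite[Theorem~7.8]{Ko14}, so your explicit Letzter--Kolb parameter bookkeeping (the values of $\mathbf{c}$ and the scalar $\mathbf{s}_r$ accounting for the extra term in the image of $\bt_r$) is simply a more detailed rendering of the paper's one-line argument. One minor slip worth noting: in the paper's indexing the involution is $i \mapsto -i-1$ on the cycle obtained by deleting node $r+1$ (so that $0 \leftrightarrow 2r+1$, etc.), not $i \mapsto -i \pmod{2r+1}$, although your substantive identification of $r$ as the unique fixed node carrying the Letzter parameter is exactly right and matches the paper's figure.
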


\begin{proof}
The proof is similar to the proof for Proposition \ref{prop:homjj}. The subalgebra here is a quantum symmetric pair coideal subalgebra associated with the Dynkin diagram and involution below, and the proposition follows from {\cite[Theorem~ 7.8]{Ko14}}.
\begin{figure}[ht!]
\caption{Dynkin diagram of type $A^{(1)}_{2r}$ with involution of type $\jmath\imath$.}
   \label{figure:ji}
\begin{tikzpicture}
\matrix [column sep={0.6cm}, row sep={0.5 cm,between origins}, nodes={draw = none,  inner sep = 3pt}]
{
	\node(U1) [draw, circle, fill=white, scale=0.6, label = 0] {};
	&\node(U2)[draw, circle, fill=white, scale=0.6, label =1] {};
	&\node(U3) {$\cdots$};
	&\node(U5)[draw, circle, fill=white, scale=0.6, label =$r-1$] {};
\\
	&&&&
	\node(R)[draw, circle, fill=white, scale=0.6, label =$r$] {};
\\
	\node(L1) [draw, circle, fill=white, scale=0.6, label =below:$2r$] {};
	&\node(L2)[draw, circle, fill=white, scale=0.6, label =below:$2r-1$] {};
	&\node(L3) {$\cdots$};
	&\node(L5)[draw, circle, fill=white, scale=0.6, label =below:$r+1$] {};
\\
};
\begin{scope}
\draw (U1) -- node  {} (U2);
\draw (U2) -- node  {} (U3);
\draw (U3) -- node  {} (U5);
\draw (U5) -- node  {} (R);
\draw (U1) -- node  {} (L1);
\draw (L1) -- node  {} (L2);
\draw (L2) -- node  {} (L3);
\draw (L3) -- node  {} (L5);
\draw (L5) -- node  {} (R);
\draw (R) edge [color = blue,loop right, looseness=40, <->, shorten >=4pt, shorten <=4pt] node {} (R);
\draw (L1) edge [color = blue,<->, bend right, shorten >=4pt, shorten <=4pt] node  {} (U1);
\draw (L2) edge [color = blue,<->, bend right, shorten >=4pt, shorten <=4pt] node  {} (U2);
\draw (L5) edge [color = blue,<->, bend left, shorten >=4pt, shorten <=4pt] node  {} (U5);
\end{scope}
\end{tikzpicture}
\end{figure}
\end{proof}

Recalling $\Ld_{n,d}$ from \eqref{def:Ld},
we define
\begin{align*}
\Lambda^{\jmath\imath}_{\nn,d} &= \{ \lambda =(\lambda_0, \lambda_1, \ldots, \lambda_{r+1}) \in \Ld_{n,d}~|~\lambda_{r+1}=0\}.
\end{align*}
Note that $\omega \in \Lambda^{\jmath\imath}_{\nn,d}$.
Define the right $\bbH$-module
\begin{align*}
\bTji = \bigoplus_{\lambda \in \Lambda^{\jmath\imath}_{\nn,d}}  x_{\lambda}\bbH.
\end{align*}
Following \cite{FL3Wb}, we define the $\jiw$-variant of the Schur algebra $\bSjj$ as follows:
\begin{align*}
\bSji &= \End_{\bbH}(\oplus_{\lambda \in \Lambda^{\jmath\imath}_{\nn,d}} x_{\lambda}\bbH) =\bigoplus_{\lambda,\mu \in \Lambda^{\jmath\imath}_{\nn,d}} \Hom_{\bbH}(x_{\mu}\bbH,x_{\lambda}\bbH).
\end{align*}
It is routine to show that $\{\phi^g_{\lambda,\mu} \mid \lambda,\mu \in \Lambda^{\ji}_{\nn,d}, g \in \D_{\lambda,\mu} \}$ form an $\cR$-basis of $\bSji$.

The following is a variant of Lemma~\ref{lem:iso}.
\begin{lem}
\label{lem:kappaji}
We have an isomorphism of $\bbH$-modules:
$\bTji \cong {}'\mathbb{V}_{\nn}^{\otimes d}.$
\end{lem}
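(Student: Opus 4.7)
The plan is to define $\kappa_{\jiw}: \bTji \to {}'\mathbb{V}_{\nn}^{\otimes d}$ by $x_\lambda \mapsto M_\lambda := v_0^{\otimes \lambda_0} \otimes v_1^{\otimes \lambda_1} \otimes \cdots \otimes v_r^{\otimes \lambda_r}$ for each $\lambda \in \Lambda^{\jmath\imath}_{\nn,d}$, extended $\bbH$-linearly, and then verify that $\kappa_{\jiw}$ is a well-defined $\bbH$-module isomorphism, in direct parallel with Lemma \ref{lem:iso}.

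The first step is to confirm that ${}'\mathbb{V}_{\nn}$ is $\bbH$-stable, so that ${}'\mathbb{V}_{\nn}^{\otimes d}$ is genuinely an $\bbH$-submodule of $\mathbb{V}^{\otimes d}$. For $T_1, \ldots, T_{d-1}$ and $X_a^{\pm 1}$ this is immediate from \eqref{Ti action}--\eqref{actionX}, since these operators preserve the residue class modulo $n$ of each tensor factor. For $T_0$, I would inspect the eight cases of \eqref{T0onV}: only the input class $k = r+1$ produces an output of residue $r+1$ (the vectors $v_{-r-1}$ and $v_{r+1}$ themselves); in contrast, for $-r \leq k \leq r$ every output vector $v_{\pm k} z^m$ has residue $\pm k \pmod n$, which lies in $\{0,1,\ldots,r\} \cup \{r+2, \ldots, 2r+1\}$ and hence avoids $r+1$.

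Well-definedness of $\kappa_{\jiw}$ then reduces via Lemma \ref{xlm} to checking $M_\lambda T_i = p_{s_i} M_\lambda$ for each $i \in \{0,1,\ldots,d\} \setminus \{\lambda_0, \lambda_{0,1}, \ldots, \lambda_{0,r}\}$. For $\lambda \in \Lambda^{\jmath\imath}_{\nn,d}$ we have $\lambda_{0,r} = d$, so $s_d$ is automatically excluded from the generators of $W_\lambda$; the remaining cases $i \in \{0,1,\ldots,d-1\}$ are precisely the equal-consecutive cases of \eqref{FD1} (giving $q^{-1} M_\lambda$) together with the $f_1=0$ branch of \eqref{FD2} (giving $q_0^{-1} M_\lambda$), matching $p_{s_i}$ on the nose. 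Surjectivity is a restriction of Corollary \ref{FD}: any $M_f \in {}'\mathbb{V}_{\nn}^{\otimes d}$ has all residues avoiding $r+1$, and the reduction procedure of Corollary \ref{FD} (using $X_a^{\pm 1}$ and $T_0, T_1, \ldots, T_{d-1}$) stays within the $\bbH$-stable subspace ${}'\mathbb{V}_{\nn}^{\otimes d}$ and terminates at some $M_\mu$ with $\mu \in \Lambda^{\jmath\imath}_{\nn,d}$. Injectivity follows from basis comparison: $\bTji$ has $\cR$-basis $\{x_\lambda T_w \mid \lambda \in \Lambda^{\jmath\imath}_{\nn,d},~w \in \D_\lambda\}$, and these are sent to a linearly independent family in ${}'\mathbb{V}_{\nn}^{\otimes d}$ whose leading terms (with respect to a length filtration) are the distinct basis vectors $M_{\lambda \cdot w}$ up to nonzero scalar factors.

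I expect the main technical obstacle to be the $T_0$-stability check, only because \eqref{T0onV} is spelled out in eight case-wise formulas; each case is a routine inspection of residues and presents no conceptual difficulty. The rest of the argument is essentially a direct transcription of the proof of Lemma \ref{lem:iso} with the indexing set restricted from $\Lambda_{n,d}$ to $\Lambda^{\jmath\imath}_{\nn,d}$.
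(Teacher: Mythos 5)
Your proposal is correct and takes essentially the same route as the paper, which proves Lemma~\ref{lem:kappaji} simply as a variant of Lemma~\ref{lem:iso}, i.e., by combining the eigenvalue property of Lemma~\ref{xlm} with the generation result of Corollary~\ref{FD}, restricted to $\Lambda^{\jmath\imath}_{\nn,d}$. The details you supply beyond the paper's terse statement --- that $T_0$ preserves ${}'\mathbb{V}_{\nn}$ because in \eqref{T0onV} only inputs of residue class $r+1$ produce outputs of residue $r+1$, and that the case $i=d$ of Lemma~\ref{xlm} never arises since $\lambda_{0,r}=d$ for $\lambda\in\Lambda^{\jmath\imath}_{\nn,d}$ --- are exactly the points the paper leaves implicit, and they are right.
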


Note $\UUji$ acts on ${}'\mathbb{V}_{\nn}^{\otimes d}$ via the embedding $\jiw: \UUji \rightarrow \UUAji$; we denote this action by $\Psi^{\jmath\imath}$.

\begin{thm}
 \label{thm:3-par:ji}
We have the following Schur $(\UUji, \bbH)$-duality:
\begin{align*}
\Psi^{\jmath\imath}(\UUji) \simeq & \End_{\bbH} ({}'\mathbb{V}_{\nn}^{\otimes d}),
\\
& \End_{\UUji} ({}'\mathbb{V}_{\nn}^{\otimes d})  \simeq \bbH^{op}.
\end{align*}
\end{thm}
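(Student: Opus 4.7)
The plan is to mirror the proof of Theorem~\ref{thm:3-par} step by step, with modifications arising from the new generator $\bt_r \in \UUji$ and from the restriction of the ambient space to ${}'\VV_{\nn}^{\otimes d}$.

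First, I would establish the analog of Proposition~\ref{commute}: the actions of $\UUji$ and $\bbH$ on ${}'\VV_{\nn}^{\otimes d}$ commute. Since ${}'\VV_{\nn}$ is already an $\bbH$-submodule of $\VV$ (as noted just before Proposition~\ref{prop:homji}), the $\bbH$-action restricts. For the generators $\be_i, \bbf_i, \bk_i^{\pm 1}$ with $0\le i\le r-1$, the commutativity with each $T_i$ follows from essentially the same computations used in Proposition~\ref{commute} for the $\jjw$-case, because $\jiw$ and $\jjw$ agree on these generators up to normalization. The genuinely new case is $\bt_r$: using its explicit image $\bE_r + qq_0q_1^{-1}\bF_r\bK_r^{-1} + \tfrac{1-q_0q_1^{-1}}{q-q^{-1}}\bK_r^{-1}$ from~\eqref{embeding4ji} together with the modified natural action~\eqref{actionUji}, one checks that $\bt_r$ commutes with each $T_i$; the only non-trivial case is $T_0$, where the diagonal term $\tfrac{1-q_0q_1^{-1}}{q-q^{-1}}\bK_r^{-1}$ is essential to absorb the extra $(q_0^{-1}-q_1)$-type terms produced in~\eqref{T0onV}. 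This allows us, via Lemma~\ref{lem:kappaji}, to define the $\cR$-algebra homomorphism $\Psi^{\ji}:\UUji \to \bSji$.

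Next, I would prove the $\jiw$-analog of Proposition~\ref{generator of bSjj}: that $\Psi^{\ji}$ is surjective. The argument follows the same inductive scheme. Using Lemma~\ref{xi(h_a)}-type Vandermonde arguments applied to $\Psi^{\ji}(\bk_a^{\pm 1})$ for $0\le a\le r-1$ (noting that $\lambda_{r+1}=0$ for $\lambda\in\Lambda^{\ji}_{\nn,d}$, so $\bh_{r+1}$ plays no role here), one obtains every diagonal idempotent $\phi^{e}_{\lambda,\lambda}$. Then, by explicit computations paralleling Lemma~\ref{xi(e_i)}, one shows
\begin{align*}
\Psi^{\ji}(\be_i) &= \sum_{\lambda \in \Lambda^{\ji}_{\nn,d}} q^{\lambda_{i+1}-1}\phi^{e}_{\etil_i(\lambda),\lambda}, \quad 0\le i \le r-1,\\
\Psi^{\ji}(\bbf_i) &= \sum_{\lambda \in \Lambda^{\ji}_{\nn,d}} c_{i,\lambda}\,\phi^{e}_{\ftil_i(\lambda),\lambda}, \quad 0\le i \le r-1,
\end{align*}
with coefficients $c_{i,\lambda}$ of the same shape as in Lemma~\ref{xi(e_i)}. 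A separate computation, analogous to but shorter than the lengthy verification for $\be_r$ in Lemma~\ref{xi(e_i)}, expresses $\Psi^{\ji}(\bt_r)$ as an explicit linear combination of $\phi^{e}_{\lambda,\lambda}$ (from the Cartan-like summand) and off-diagonal $\phi^{g}_{\lambda,\mu}$ with $g$ involving $s_d$ (from the $\bE_r + qq_0q_1^{-1}\bF_r\bK_r^{-1}$ summands). Together with the diagonal elements already in the image, this recovers each $\phi^{e}_{\mu,\lambda}$ obtained from $\omega$ by the moves $\etil_i, \ftil_i$ ($0\le i \le r-1$), hence every $\phi^{e}_{\lambda,\omega}$ and $\phi^{e}_{\omega,\lambda}$ for $\lambda \in \Lambda^{\ji}_{\nn,d}$. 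Finally, the products $\phi^{e}_{\omega,\etil_i(\omega)}\cdot \phi^{e}_{\etil_i(\omega),\omega}$ give $\phi^{s_i}_{\omega,\omega}$ up to scalars for $0 \le i \le d-1$, while an appropriate polynomial in $\Psi^{\ji}(\bt_r)$ together with the known idempotents produces $\phi^{s_d}_{\omega,\omega}$; this uses the quadratic relation $(\bt_r)^2 = \text{(known elements)}$ that ultimately reflects the Hecke quadratic relation for $T_d$. Because $\phi^{e}_{\omega,\omega}\bSji\phi^{e}_{\omega,\omega}$ is generated by the $\phi^{s_i}_{\omega,\omega}$'s, and $\bSji = \bigoplus_{\lambda,\mu} \phi^{e}_{\lambda,\omega}\bSji\phi^{e}_{\omega,\mu}$, surjectivity follows.

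The first isomorphism in the theorem is then $\Psi^{\ji}(\UUji) = \bSji \simeq \End_{\bbH}({}'\VV_{\nn}^{\otimes d})$ via Lemma~\ref{lem:kappaji}. The second isomorphism follows from the standard double-centralizer chain
\[
\End_{\UUji}({}'\VV_{\nn}^{\otimes d}) \simeq \End_{\bSji}(\bTji) \simeq \bigl(\phi^{e}_{\omega,\omega}\bSji\phi^{e}_{\omega,\omega}\bigr)^{op} \simeq \bbH^{op},
\]
exactly as in the proof of Theorem~\ref{thm:3-par}, where the last isomorphism uses $W_{\omega}=\{e\}$ so that $\phi^{e}_{\omega,\omega}\bSji\phi^{e}_{\omega,\omega}\simeq \bbH$. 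The main obstacle is the third step: tracking the precise coefficients in $\Psi^{\ji}(\bt_r)$ on the $M_\lambda$'s (the analog of the long calculation in Lemma~\ref{xi(e_i)} for $\be_r$) and showing that its combination with the Cartan-type elements yields exactly the boundary basis element $\phi^{s_d}_{\omega,\omega}$ needed to complete the surjectivity argument.
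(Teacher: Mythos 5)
Your proposal is correct and follows essentially the same route as the paper's proof: reduce commutativity to the single check of $\bt_r$ against $T_0$ (the $\HH_A$-part being inherited from Proposition~\ref{commute}), establish surjectivity onto $\bSji$ by the $\jmath\imath$-variant of Proposition~\ref{generator of bSjj}, and conclude with the double-centralizer chain through $\bigl(\phi^{e}_{\omega,\omega}\bSji\phi^{e}_{\omega,\omega}\bigr)^{op}\simeq\bbH^{op}$. The only refinement worth noting is that the paper gets $\phi^{s_d}_{\omega,\omega}$ directly from the membership $\Psi^{\ji}(\bt_r)\in\sum_{\ld\in\Ld^{\ji}_{\nn,d}}\cR\,\phi^{s_d}_{\ld,\ld}$ by sandwiching with the idempotents $\phi^{e}_{\omega,\omega}$ — the non-Cartan part of $\Psi^{\ji}(\bt_r)$ is weight-block-diagonal (it preserves each $x_\ld\bbH$) rather than ``off-diagonal'' as you describe, so no polynomial in $\Psi^{\ji}(\bt_r)$ or appeal to its quadratic relation is needed.
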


\begin{proof}
We first check that the actions of $\UUji$ and $\bbH$ on $'\mathbb{V}_{\nn}^{\otimes d}$ commute.
As seen in Proposition~\ref{commute}, it remains to verify that the $T_0$-action commutes with the $\bt_r$-action on $'\mathbb{V}_{\nn}$.
For the unique expression $f = k + jn$ such that $-r \leq k \leq r$, we combine \eqref{embeding4ji} and \eqref{actionUji} and then obtain
\begin{align}
&
\bt_r v_f = \frac{1-q_0q_1\inv}{q-q\inv} v_f
\quad
(f \not\equiv r,r+2),
\\
&
\bt_r v_rz^j
=
q_0q_1\inv v_{-r}z^{j+1}
+ \frac{1-q_0q_1\inv}{q-q\inv}q\inv v_rz^j,
\quad
\bt_r v_{-r}z^j
=
v_rz^{j-1} + \frac{1-q_0q_1\inv}{q-q\inv} q v_{-r}z^j.
\end{align}
Note that the $\bt_r$-action is a scalar multiplication for $f \not\equiv r,r+2$ and hence it commutes with the $T_0$-action \eqref{T0onV}. For $k = r, j \geq 0$ we have
\begin{align*}
(\bt_r v_rz^j) T_0
&=
q_0q_1\inv \big{(}
q_0^{-1}q_1v_{r}z^{-j-1}
+(q_1-q_0^{-1})\sum_{l=1}^{j}v_{-r}z^{j-2l+1}
+(q_0^{-1}q_1-1)\sum_{l=1}^{j+1}v_{-r}z^{j-2l+2}
\big{)}
\\
&+ \frac{1-q_0q_1\inv}{q-q\inv}q\inv \big{(}
v_{-r}z^{-j}
+(q_1-q_0^{-1})\sum_{l=1}^{j}v_rz^{j-2l}
+(q_0^{-1}q_1-1)\sum_{l=1}^{j}v_rz^{j-2l+1}
\big{)},
\\
\bt_r (v_rz^j T_0)
&=
v_rz^{-j-1} + \frac{1-q_0q_1\inv}{q-q\inv} q v_{-r}z^{-j}
\\
&+(q_1-q_0^{-1})\sum_{l=1}^{j}
\big{(}
q_0q_1\inv v_{-r}z^{j-2l+1}
+ \frac{1-q_0q_1\inv}{q-q\inv}q\inv v_rz^{j-2l}
\big{)}
\\
&+(q_0^{-1}q_1-1)\sum_{l=1}^{j}
\big{(}
q_0q_1\inv v_{-r}z^{j-2l+2}
+ \frac{1-q_0q_1\inv}{q-q\inv}q\inv v_rz^{j-2l+1}
\big{)}.
\end{align*}
They are indeed equal.
The rest can be checked similarly and the commutivity follows.

For the first isomorphism, it suffices to show that
\begin{equation}
 \label{eq:jiS}
\Psi^{\jmath\imath}(\UUji) \simeq \bSji,
\end{equation}
which follows from a variant of Proposition~\ref{generator of bSjj} as below.
Let $'\!S$ be the subalgebra of $\bSji$ generated by $\Psi^{\jmath\imath}(\be_i), \Psi^{\jmath\imath}(\bbf_i), \Psi^{\jmath\imath}(\bk_i)$ and $\Psi^{\jmath\imath}(\bt_r)$ for $0\leq i \leq r-1$. Similar to the proof of Proposition~\ref{generator of bSjj}, one can show that ${}'\!S$ contains the elements $\phi^e_{\omega, \ld}, \phi^e_{\ld, \omega}$ for all $\ld \in \Ld^{\ji}_{\nn,d}$ and the elements $\phi^{s_i}_{\omega, \omega}$ for $0 \leq i \leq d-1$.
The only difference here is that $\phi^{s_d}_{\omega, \omega} \in {}'\!S$ follows from
\begin{equation*}
\Psi^{\ji}(\bt_r) \in \sum_{\ld \in \Ld^{\ji}_{\nn, d}}\cR \phi^{s_d}_{\ld, \ld}.
\end{equation*}

For the second isomorphism, note that \eqref{eq:jiS} together with Lemma~ \ref{lem:kappaji} induce an isomorphism
$\End_{\UUji}('\VV_{\nn}^{\otimes d}) \simeq \End_{\bSji}(\bTji)$.
The theorem now follows since
\[
\End_{\bSji}(\bTji)
\simeq (\phi^e_{\omega,\omega} \bSji \phi^e_{\omega,\omega})^{op} \simeq \bbH^{op}.
\]
\end{proof}

\subsection{The $\ijw$-Schur duality}

Let $\mathbb{V}_{\nn}$ be the $\cR$-subspace of $\mathbb{V}$ spanned by $v_i$, for $i\in \ZZ$ such that  $i \not\equiv 0 \pmod n$.
Note that $\mathbb{V}_{\nn}$ is  naturally an $\bbH$-submodule of $\mathbb{V}$, and moreover, it is a direct sum of permutation modules.

Recall $\UUAijg$ is generated by $\bE_i, \bF_i (0 \leq i \leq \nn-1), \bD_a^{\pm1} (1 \leq a \leq \nn).$ Denote by $\UUAij$ the subalgebra of $\UUAijg$  {generated by $\bE_i, \bF_i, \bK_i$ $(i \in [0,\nn-1])$, where $\bK_{0}=\bD_{n-1}\bD_{1}^{-1}, \bK_i=\bD_{i}\bD_{i+1}^{-1}$ ($i\neq 0$).}
Then $\mathbb{V}_{\nn}$ is a natural representation of $\UUAijg$, with the action given by
\eq\label{actionUij}
\bE_{i}v_{j+1}=
\bc{v_{j}&\tif j\equiv i \neq 0;
\\
v_{j-1}&\tif j\equiv i = 0;
\\
0&\text{else},}
\quad
\bF_{i}v_{j}=\bc{
v_{j+1} &\tif j\equiv i \neq 0;
\\
v_{j+2} &\tif j+1\equiv i =0;
\\
0 &\text{else},
}
\quad
\bD_{a}v_{j}=
\bc{q v_{j} &\tif j\equiv a;
\\
v_{j} &\text{else}.
}
\endeq
Then $\UUAijg$ and $\UUAij$ act on $\mathbb{V}_{\nn}^{\otimes d}$ via iterated comultiplication.


Define $\UUij$ to be an $\cR$-algebra generated by $\bt_0,
\be_i, \bbf_i,$ and $\bk_i^{\pm 1} \;  (1 \leq i \leq r)$. 
We will not write down all its relations explicitly, as there is a $\QQ(q)$-algebra isomorphism $\UUij \rightarrow \UUji$, which sends $q_0 \mapsto q_1, q_1 \mapsto q_0, \bt_0 \mapsto \bt_r, \be_i \mapsto \be_{r-i}, \bbf_i \mapsto \bbf_{r-i}, \bk_i \mapsto \bk_{r-i}$, for $1\leq i\leq r$.
In particular, the Serre relations for $\bt_0$ are as follows:
\begin{align}\label{eq:Serreij}
&\tji_0^2 \eji_{1} + \eji_{1} \tji_0^2  = (q + q^{-1}) \tji_0 \eji_{1} \tji_0 + q_0\inv q_1  \eji_{1},
\quad
\tji_0^2 \fji_{1} + \fji_{1} \tji_0^2  = (q + q^{-1}) \tji_0 \fji_{1} \tji_0 + q_0\inv q_1 \fji_{1}.
\end{align}
We refer to  \S\ref{sec:ji} for the rest of relations of $\UUji$.

The following proposition is a variant of Proposition~\ref{prop:homji} associated to the Dynkin diagram below in Figure~\ref{figure:ij}.

\begin{figure}[ht!]
\caption{Dynkin diagram of type $A^{(1)}_{2r}$ with involution of type $\imath\jmath$.}
   \label{figure:ij}
\begin{tikzpicture}
\matrix [column sep={0.6cm}, row sep={0.5 cm,between origins}, nodes={draw = none,  inner sep = 3pt}]
{
	&\node(U1) [draw, circle, fill=white, scale=0.6, label = 1] {};
	&\node(U3) {$\cdots$};
	&\node(U4)[draw, circle, fill=white, scale=0.6, label =$r-1$] {};
	&\node(U5)[draw, circle, fill=white, scale=0.6, label =$r$] {};
\\
	\node(L)[draw, circle, fill=white, scale=0.6, label =0] {};
	&&&&&
\\
	&\node(L1) [draw, circle, fill=white, scale=0.6, label =below:$2r$] {};
	&\node(L3) {$\cdots$};
	&\node(L4)[draw, circle, fill=white, scale=0.6, label =below:$r+2$] {};
	&\node(L5)[draw, circle, fill=white, scale=0.6, label =below:$r+1$] {};
\\
};
\begin{scope}
\draw (L) -- node  {} (U1);
\draw (U1) -- node  {} (U3);
\draw (U3) -- node  {} (U4);
\draw (U4) -- node  {} (U5);
\draw (U5) -- node  {} (L5);
\draw (L) -- node  {} (L1);
\draw (L1) -- node  {} (L3);
\draw (L3) -- node  {} (L4);
\draw (L4) -- node  {} (L5);
\draw (L) edge [color = blue, loop left, looseness=40, <->, shorten >=4pt, shorten <=4pt] node {} (L);
\draw (L1) edge [color = blue,<->, bend right, shorten >=4pt, shorten <=4pt] node  {} (U1);
\draw (L4) edge [color = blue,<->, bend left, shorten >=4pt, shorten <=4pt] node  {} (U4);
\draw (L5) edge [color = blue,<->, bend left, shorten >=4pt, shorten <=4pt] node  {} (U5);
\end{scope}
\end{tikzpicture}
\end{figure}

\begin{prop}
 \label{prop:homij}
There is an injective $\cR$-algebra homomorphism
$\ijw: \UUij \rightarrow \UUAij$ such that
\bA{
\notag
&\bk_a \mapsto  \bK_a\bK_{-a-1}^{-1},   \quad (1\leq a \le  r)
\\
\notag
&\be_i\mapsto \bE_i+\bF_{-i-1}\bK_i^{-1},
\quad
\bbf_i\mapsto \bE_{-i-1} + \bF_i\bK_{-i-1}^{-1},
\quad (1\leq i \leq r-1)
\\
\notag
&\be_r\mapsto \bE_r+q^{-1}\bF_{-r-1}\bK_r^{-1},
\quad
\bbf_r\mapsto \bE_{-r-1} + q_0q_1^{-1}\bF_r\bK_{-r-1}^{-1},
\\
&  \label{embeding4ij}
\bt_0 \mapsto \bE_0+qq_0\inv q_1 \bF_0 \bK_{0}^{-1} +(q_1-q_0^{-1})/(q-q^{-1}) \bK_{0}^{-1}.
}
\end{prop}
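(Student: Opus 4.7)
The plan is to follow the template set by Propositions~\ref{prop:homjj} and~\ref{prop:homji}: exhibit the image of $\ijw$ as a quantum symmetric pair coideal subalgebra of $\UUAij$ in the sense of \cite{Ko14} attached to the Satake-type diagram of Figure~\ref{figure:ij}, and then invoke \cite[Theorem~7.8]{Ko14}, which simultaneously delivers a presentation matching the defining relations of $\UUij$ and the injectivity of the embedding. The combinatorial data is the involution of the affine $A^{(1)}_{2r}$ diagram that fixes the node $0$ (a black, self-looped node in Kolb's terminology) and swaps $i \leftrightarrow -i \pmod{\nn}$ for $1 \le i \le r$.

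The concrete task is then to verify admissibility of the Letzter--Kolb parameters read off from \eqref{embeding4ij}. The generators $\be_i, \bbf_i$ for $1 \le i \le r-1$ are standard Kolb generators already recognized via Proposition~\ref{prop:homjj}, and the same is true for $\be_r, \bbf_r$. The new case is $\bt_0$: the formula $\bE_0 + qq_0^{-1}q_1 \bF_0 \bK_0^{-1} + (q_1 - q_0^{-1})/(q-q^{-1})\,\bK_0^{-1}$ exhibits the ``$\bK$-correction'' characteristic of Kolb's generators at a fixed black node, with $\varsigma_0 = qq_0^{-1}q_1$ and $\kappa_0$ proportional to $q_1 - q_0^{-1}$; these are admissible for the Satake diagram of Figure~\ref{figure:ij}. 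The generalized Serre relation \eqref{eq:Serreij} for $\bt_0$ then appears as a special case of the Letzter--Kolb Serre relation attached to the fixed black node $0$, and the coideal property $\Delta(\bt_0) \in \bt_0 \otimes 1 + \UUAij \otimes \UUij$ follows directly from the comultiplication formulas of Section~\ref{Quantum} applied termwise.

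A shorter, equivalent route would be to reduce directly to Proposition~\ref{prop:homji} via the explicit $\QQ(q)$-algebra isomorphism $\sigma : \UUij \xrightarrow{\sim} \UUji$ recorded in the excerpt, together with the $\QQ(q)$-algebra isomorphism $\widetilde\sigma$ of $\UUAij \cong \UUAji$ induced by the diagram rotation $i \mapsto r - i \pmod{\nn}$ and the simultaneous scalar swap $q_0 \leftrightarrow q_1$. One verifies on each generator that $\widetilde\sigma \circ \ijw = \jiw \circ \sigma$; the only nontrivial case is the comparison of $\widetilde\sigma(\ijw(\bt_0))$ against $\jiw(\bt_r)$, which matches term by term after the scalar swap by inspection of \eqref{embeding4ji} versus \eqref{embeding4ij}. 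Injectivity of $\ijw$ then follows from injectivity of $\jiw$. The main obstacle in either route lies in the bookkeeping around $\bt_0$, where the asymmetric parameter $q_0^{-1}q_1$ and the additive Cartan correction $(q_1 - q_0^{-1})/(q-q^{-1})\,\bK_0^{-1}$ must be tracked carefully when verifying \eqref{eq:Serreij}; the remaining generators are handled identically to Propositions~\ref{prop:homjj} and~\ref{prop:homji}.
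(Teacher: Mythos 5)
Your first route is exactly the paper's proof: the paper gives no computation for this proposition at all, saying only that it is ``a variant of Proposition~\ref{prop:homji} associated to the Dynkin diagram in Figure~\ref{figure:ij}'', i.e.\ one recognizes the images of the generators as a quantum symmetric pair coideal subalgebra and invokes \cite[Theorem~7.8]{Ko14}, just as in Propositions~\ref{prop:homjj} and~\ref{prop:homji}. (A terminological quibble: in Kolb's conventions node $0$ is a \emph{white} node fixed by the involution --- the black nodes form the subset $X$, which is empty here --- but this does not affect the argument.)

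Your ``shorter, equivalent route'', however, fails at precisely the step you dismiss as matching ``term by term \ldots by inspection''. Applying the swap $q_0 \leftrightarrow q_1$ and the relabeling $0 \mapsto r$ to $\ijw(\bt_0)$ yields
\[
\bE_r + qq_0q_1\inv\, \bF_r\bK_r\inv + \frac{q_0 - q_1\inv}{q-q\inv}\,\bK_r\inv,
\]
whereas \eqref{embeding4ji} gives $\jiw(\bt_r) = \bE_r + qq_0q_1\inv\,\bF_r\bK_r\inv + \frac{1 - q_0q_1\inv}{q-q\inv}\,\bK_r\inv$. The $\bF_r\bK_r\inv$-coefficients agree, but $q_0 - q_1\inv = q_1\inv(q_0q_1-1)$ while $1 - q_0q_1\inv = q_1\inv(q_1-q_0)$; these coincide only when $q_0 = 1$, not identically in $\cR$. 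The discrepancy cannot be absorbed anywhere: the two elements differ by a nonzero multiple of $\bK_r\inv$, and $\bK_r^{\pm 1}$ does not lie in $\jiw(\UUji)$ (the intersection of the coideal with the Cartan part is generated by the elements $\bK_a\bK_{-a-1}\inv$), so $\widetilde\sigma(\ijw(\UUij))$ and $\jiw(\UUji)$ are \emph{distinct} subalgebras of $\UUAji$ --- distinct members of the Letzter--Kolb family, differing in the admissible $\bK\inv$-correction parameter --- and no isomorphism $\sigma$, the paper's or any other, can make your square commute. Rescaling the ambient generators does not help either: matching the $\bE_r$- and $\bF_r\bK_r\inv$-coefficients pins any rescaling to $\pm 1$, leaving the $\bK_r\inv$-terms unequal. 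The asymmetry is genuine, since the two correction scalars are calibrated against different Hecke data ($T_0$ with eigenvalues $q_0\inv, -q_1$, versus $T_d$ with eigenvalues $q_1\inv, -q_0\inv$); this is presumably why the paper states Proposition~\ref{prop:homij} with its own datum rather than transporting Proposition~\ref{prop:homji}. Your first route is complete on its own; drop the second, or repair it by checking directly that \eqref{eq:Serreij} holds for the paper's value of the $\bK_0\inv$-coefficient (it does, because the constant $q_0\inv q_1$ in that relation is independent of this coefficient), which amounts to redoing the Kolb verification anyway.
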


Recalling $\Ld_{n,d}$ from \eqref{def:Ld},
we define
\begin{align*}
\Lambda^{\imath\jmath}_{\nn,d} &= \{ \lambda =(\lambda_0, \lambda_1, \ldots, \lambda_{r+1}) \in \Ld_{n,d}~|~\lambda_0=0\}, \\
\end{align*}
and define the right $\bbH$-module
\begin{align*}
\bTij &= \bigoplus_{\lambda \in \Lambda^{\imath\jmath}_{\nn,d}} x_{\lambda}\bbH.
\end{align*}
Following \cite{FL3Wb}, we define the $\ijw$ variant of the Schur algebra $\bSjj$ as follows:
\begin{align*}
\bSij &= \End_{\bbH}(\oplus_{\lambda \in \Lambda^{\imath\jmath}_{\nn,d}} x_{\lambda}\bbH) =\bigoplus_{\lambda,\mu \in \Lambda^{\imath\jmath}_{\nn,d}} \Hom_{\bbH}(x_{\mu}\bbH,x_{\lambda}\bbH).
\end{align*}

The following is a variant of Lemma~\ref{lem:iso}.
\begin{lem}
We have an isomorphism of $\bbH$-modules:
$\bTij \cong \mathbb{V}_{\nn}^{\otimes d}.$
\end{lem}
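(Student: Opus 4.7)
The plan is to define an $\bbH$-module map $\kappa^{\imath\jmath} : \bTij \longrightarrow \mathbb{V}_{\nn}^{\otimes d}$ by the rule $x_\lambda \mapsto M_\lambda$ for each $\lambda \in \Lambda^{\imath\jmath}_{\nn,d}$, mirroring the construction in Lemma~\ref{lem:iso} and its $\jmath\imath$-variant Lemma~\ref{lem:kappaji}, and to show this map is an isomorphism.

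Well-definedness and injectivity come for free: since $\Lambda^{\imath\jmath}_{\nn,d} \subset \Lambda_{n,d}$, the module $\bTij$ is naturally a direct summand of $\bTjj$, so the isomorphism $\kappa$ of Lemma~\ref{lem:iso} restricts to an injective $\bbH$-module map $\bTij \to \mathbb{V}^{\otimes d}$ sending $x_\lambda \mapsto M_\lambda$. For $\lambda \in \Lambda^{\imath\jmath}_{\nn,d}$ the constraint $\lambda_0 = 0$ forces $M_\lambda = v_1^{\otimes \lambda_1} \otimes \cdots \otimes v_{r+1}^{\otimes \lambda_{r+1}}$ to have no $v_0$ factor, so $M_\lambda \in \mathbb{V}_{\nn}^{\otimes d}$. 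Combined with the observation (noted before the lemma) that $\mathbb{V}_{\nn}^{\otimes d}$ is an $\bbH$-submodule of $\mathbb{V}^{\otimes d}$, the image of the restricted map sits inside $\mathbb{V}_{\nn}^{\otimes d}$, and we obtain a well-defined injection $\kappa^{\imath\jmath}$.

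Surjectivity is the substantive step, and amounts to establishing the analog of Corollary~\ref{FD}: $\mathbb{V}_{\nn}^{\otimes d}$ is generated as an $\bbH$-module by $\{M_\lambda : \lambda \in \Lambda^{\imath\jmath}_{\nn,d}\}$. The scheme is parallel to that of Corollary~\ref{FD}: for any basis vector $M_f$ with all $f_i \not\equiv 0 \pmod n$, use $X_i^{\pm 1}$ via \eqref{actionX} to reduce each coordinate to its residue in $\{-r, \ldots, -1, 1, \ldots, r+1\}$; use the type-$A$ operators $T_i$ ($i \geq 1$) via \eqref{Ti action} to sort modulo lower-order terms; and use $T_0$ via \eqref{T0onV}, interleaved with the $T_i$ to bring each coordinate into position~$1$, to turn negative entries into positive ones. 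The key bookkeeping observation keeping the reduction inside $\mathbb{V}_{\nn}^{\otimes d}$ is that every branch of \eqref{T0onV} applied to $v_k z^j$ with $k \neq 0$ produces only terms $v_{k'} z^{j'}$ with $k' \neq 0$.

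The main (mild) obstacle is organizing this inductive reduction so that it terminates at sorted tuples with entries in $\{1, \ldots, r+1\}$; once this bookkeeping is in place, the resulting basis elements are precisely the $M_\mu$ with $\mu \in \Lambda^{\imath\jmath}_{\nn,d}$, yielding surjectivity and completing the isomorphism.
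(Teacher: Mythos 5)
Your proposal is correct and follows essentially the same route as the paper, which proves this lemma simply as ``a variant of Lemma~\ref{lem:iso}'': the map $x_\lambda \mapsto M_\lambda$ restricted from $\kappa$, with surjectivity supplied by the $\ijw$-analogue of Corollary~\ref{FD} inside $\mathbb{V}_{\nn}^{\otimes d}$. Your key bookkeeping point --- that every branch of \eqref{T0onV} sends $v_k z^j$ with $k \neq 0$ to terms $v_{k'}z^{j'}$ with $k' \neq 0$, so the reduction never leaves $\mathbb{V}_{\nn}^{\otimes d}$ --- is exactly the observation underlying the paper's remark that $\mathbb{V}_{\nn}$ is an $\bbH$-submodule and a direct sum of permutation modules.
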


Note $\UUij$ acts on $\mathbb{V}_{\nn}^{\otimes d}$ via the embedding $\ijw: \UUij \rightarrow \UUA$; we denote this action by $\Psi^{\imath\jmath}$.
In particular, we give the $\bt_0$-action on $\VV_\nn$ for record in the following:
for $f \not\equiv 0$, we have
\begin{align*}
\bt_0 v_f =
\bc{
v_{f-2} + \frac{q_1-q_0\inv}{q-q\inv} q v_f
&\tif f \equiv 1;
\\
q_0\inv q_1 v_{f+2} + \frac{q_1-q_0\inv}{q-q\inv} q\inv v_f
&\tif f \equiv -1;
\\
\frac{q_1-q_0\inv}{q-q\inv} v_f
&\otw.
}
\end{align*}

The following is a variant of Theorem~\ref{thm:3-par:ji}, and can be proved similarly.

\begin{thm}
 \label{thm:3-par:ij}
We have the following Schur $(\UUij, \bbH)$-duality:
\begin{align*}
\Psi^{\imath\jmath} (\UUij) \simeq  & \End_{\bbH} (\mathbb{V}_{\nn}^{\otimes d}),
\\
& \End_{\UUij} (\mathbb{V}_{\nn}^{\otimes d})  \simeq \bbH^{op}.
\end{align*}
\end{thm}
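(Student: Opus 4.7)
The plan is to follow the blueprint of Theorem~\ref{thm:3-par:ji} verbatim, merely swapping the roles of the ``type $\ji$ node at $r$'' and the ``type $\ij$ node at $0$''. There are four main steps.

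First, I would verify that the $\UUij$- and $\bbH$-actions commute on $\VV_\nn^{\otimes d}$. The commutation between $\UUAij$ and $\HH_A$ (subalgebra generated by $T_1,\ldots,T_{d-1}, X_a^{\pm 1}$) is the standard affine type A Schur duality (cf.\ \cite{CP94, Gr99}), so only the $T_0$-commutation needs to be checked. Moreover it suffices to verify this for $d=1$, and among the generators $\be_i, \bbf_i, \bk_i, \bt_0$ of $\UUij$, the Chevalley generators $\be_i, \bbf_i, \bk_i$ ($1\le i \le r$) with $i\neq r$ act by the same formulas as in the $\jjw$-case (the $r$-case is covered as in Proposition~\ref{commute}). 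Hence the only new verification is
\[
(\bt_0 v)T_0 = \bt_0(v T_0) \quad \textup{for all } v=v_k z^j \in \VV_\nn.
\]
Using the explicit formula for $\bt_0$ displayed just before Theorem~\ref{thm:3-par:ij} and the formula for $T_0$ in \eqref{T0onV}, this reduces to case checking: for $k \not\equiv \pm 1$ the action of $\bt_0$ is a scalar and therefore commutes with $T_0$ trivially; for $k\equiv \pm 1$ one matches the explicit sums on both sides, in direct analogy with the $\bt_r$ computation performed in the proof of Theorem~\ref{thm:3-par:ji}.

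Second, I would establish $\Psi^{\ij}(\UUij) \simeq \bSij$ via the $\ij$-analogue of Proposition~\ref{generator of bSjj}. Let $'\!S$ be the subalgebra of $\bSij$ generated by $\Psi^{\ij}(\be_i), \Psi^{\ij}(\bbf_i), \Psi^{\ij}(\bk_i^{\pm 1})$ for $1\le i\le r$ and by $\Psi^{\ij}(\bt_0)$. The arguments in Lemma~\ref{xi(h_a)} and Lemma~\ref{xi(e_i)} go through to show that $\phi^e_{\etil_i(\ld),\ld}, \phi^e_{\ftil_i(\ld),\ld}\in {}'\!S$ for all $\ld\in\Ld^{\ij}_{\nn,d}$ and all $1\le i\le r$, since the defining embedding $\ijw$ is the same as $\jjw$ on these generators. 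Iterated application produces $\phi^e_{\ld,\omega}, \phi^e_{\omega,\ld} \in{}'\!S$ and then the elements $\phi^{s_i}_{\omega,\omega}$ for $1\le i \le d$ by forming products $\phi^e_{\omega,\etil_i(\omega)}\phi^e_{\etil_i(\omega),\omega}$ and $\phi^e_{\omega,\ftil_r(\omega)}\phi^e_{\ftil_r(\omega),\omega}$. The only new ingredient is that $\phi^{s_0}_{\omega,\omega} \in {}'\!S$, which follows from the explicit $\bt_0$-formula: since $\bt_0$ fixes each $v_k z^j$ with $k\not\equiv \pm 1$ up to a scalar and shifts $v_{\pm 1} z^j$ to $v_{\mp 1} z^{j\pm 1} + (\text{diagonal})$, the image $\Psi^{\ij}(\bt_0)$ lies in $\sum_{\ld \in \Ld^{\ij}_{\nn,d}} \cR\,\phi^{s_0}_{\ld,\ld}$ (here $s_0$ is the affine simple reflection acting by $f\mapsto (-f_1,f_2,\ldots,f_d)$, which corresponds precisely to the $\pm 1$ swap); solving for $\phi^{s_0}_{\omega,\omega}$ using the invertible Vandermonde argument from Lemma~\ref{xi(h_a)} places it in ${}'\!S$. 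Once $\phi^{s_i}_{\omega,\omega}\in{}'\!S$ for all $0\le i\le d$, the argument of Proposition~\ref{generator of bSjj} (using $\phi^e_{\omega,\omega}\bSij\phi^e_{\omega,\omega}$ is generated by these) gives ${}'\!S = \bSij$.

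Third, the first isomorphism $\Psi^{\ij}(\UUij)\simeq \End_{\bbH}(\VV_\nn^{\otimes d})$ follows by combining the previous step with the $\bbH$-module isomorphism $\bTij\simeq \VV_\nn^{\otimes d}$. For the second isomorphism, the identification $\Psi^{\ij}(\UUij)\simeq \bSij$ gives
\[
\End_{\UUij}(\VV_\nn^{\otimes d}) \simeq \End_{\bSij}(\bTij) \simeq (\phi^e_{\omega,\omega}\bSij\phi^e_{\omega,\omega})^{op} \simeq \bbH^{op},
\]
where the last isomorphism uses that $W_\omega = \{e\}$ so $\phi^e_{\omega,\omega}\bSij\phi^e_{\omega,\omega}\simeq \bbH$ via $\phi^g_{\omega,\omega}\mapsto q_g^{-1}T_g$ (the same reasoning as in the proof of Theorem~\ref{thm:3-par}).

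The main obstacle is the commutation check $[\bt_0, T_0]=0$ on $\VV_\nn$, which is the one genuinely new computation. Unlike $\be_r, \bbf_r$ in the $\jjw$-case, $\bt_0$ has a nonzero ``diagonal correction'' term $\frac{q_1-q_0^{-1}}{q-q^{-1}}\bK_0^{-1}$, and this correction must exactly cancel with the discrepancy produced by the nontrivial $T_0$-action at the boundary residues $0, \pm 1 \pmod n$. The delicate point is that the parameter ratio $\frac{q_1-q_0^{-1}}{q-q^{-1}}$ in the embedding \eqref{embeding4ij} is rigidly determined by precisely this cancellation, which is why the same parameter appears in the Serre relations \eqref{eq:Serreij} (this is the coideal version of the requirement that the image of $\bt_0$ be invariant under a certain twisted involution of $\UUA$). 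Once this case-check is completed, the rest of the proof is formally identical to that of Theorem~\ref{thm:3-par:ji}.
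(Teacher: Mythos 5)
Your proposal is correct and follows essentially the same route as the paper, which proves Theorem~\ref{thm:3-par:ij} simply by declaring it ``a variant of Theorem~\ref{thm:3-par:ji}, proved similarly'': the only genuinely new ingredients are the $[\bt_0,T_0]=0$ check on $\VV_\nn$ and the containment $\Psi^{\imath\jmath}(\bt_0)\in\sum_{\ld}\cR\,\phi^{s_0}_{\ld,\ld}$ (modulo diagonal terms $\phi^e_{\ld,\ld}$, which are already available by Lemma~\ref{xi(h_a)}) yielding $\phi^{s_0}_{\omega,\omega}$, exactly as you identify, mirroring the role of $\bt_r$ and $\phi^{s_d}_{\omega,\omega}$ in the $\jmath\imath$-case. (One cosmetic slip: at node $0$ the shift is $v_{\pm 1}z^j \mapsto v_{\mp 1}z^{j}$ with no change in the $z$-degree, unlike the $\bt_r$-case; this does not affect the argument.)
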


\begin{rem}
Starting with a natural $\UUAij$-module 
$\VV_{\nn,\frac12}$ with a basis parametrized by $\frac12+\ZZ$ of periodicity $\nn$, we can reformulate the Schur $(\UUji, \bbH)$-duality in Theorem~\ref{thm:3-par:ji} on $\mathbb{V}_{\nn,\frac12}^{\otimes d}$accordingly.
Similarly, starting with a natural $\UUAij$-module 
$\VV_{\nn,0}$ with a basis parametrized by $\ZZ$ of periodicity $\nn$, we can reformulate the Schur $(\UUij, \bbH)$-duality in Theorem~\ref{thm:3-par:ij} on $\mathbb{V}_{\nn,0}^{\otimes d}$ accordingly.
\end{rem}

\subsection{The $\iiw$-Schur duality}

We shall assume $r\ge 2$ in this subsection.
Let $\mathbb{V}_{\eta}$ be the $\cR$-subspace of $\mathbb{V}$ spanned by $v_i$, for $i\in \ZZ$ such that $i \not\equiv 0 \pmod n$ and $i \not\equiv r+1 \pmod n$.  Note $\mathbb{V}_{\eta} =\mathbb{V}_{\nn} \cap {}'\mathbb{V}_{\nn}$  is naturally an $\bbH$-submodule of $\mathbb{V}$, and moreover, it is a direct sum of permutation modules.

We consider the $\cR$-algebra $\UUAiig$ (with an unusual  indexing set of generators). The algebra $\UUAiig$ is generated by $\bE_i, \bF_i$
$(i \in [0,\nn-1]\backslash \{r+1\}),
\bD_a^{\pm1}$ $(a \in [{1},\nn]\backslash \{r+1\});$
here we regard indices $r, r+2$ adjacent. Denote by $\UUAii$ the subalgebra of $\UUAiig$ {generated by $\bE_i, \bF_i, \bK_i$ $(i \in [0,\nn-1]\backslash \{r+1\})$, where $\bK_0=\bD_{-1}\bD_{1}^{-1}, \bK_{r}=\bD_{r}\bD_{r+2}^{-1}, \bK_i=\bD_{i}\bD_{i+1}^{-1}$ ($i\neq 0,r$).}
Then $\mathbb{V}_{\eta}$ is a natural representation of $\UUAiig$, with the action given by: for $i \in [0,\nn-1]\backslash \{r+1\},  a \in [0,\nn]\backslash \{r+1\}$,
\begin{align}
\label{actionUii}
\begin{split}
\bE_{i}v_{j+1} &=
\bc{v_{j}&\tif j\equiv i \neq 0,r;
\\
v_{j-1}&\tif j\equiv i = 0;
\\
v_{j-1}&\tif j-1\equiv i = r;
\\
0&\text{else},}
\\
\bF_{i}v_{j} &=\bc{
v_{j+1} &\tif j\equiv i \neq 0,r;
\\
v_{j+2} &\tif j+1 \equiv i =0;
\\
v_{j+2} &\tif j\equiv i =r;
\\
0 &\text{else},
}
\qquad
\bD_{a}v_{j}=
\bc{q v_{j} &\tif j\equiv a;
\\
v_{j} &\text{else}.
}
\end{split}
\end{align}
Then $\UUAiig$ acts on $\mathbb{V}_{\eta}^{\otimes d}$ via iterated comultiplication.


Define  $\UUii$ to be the $\cR$-algebra generated by $\bt_{0}, \bt_{r},  \be_i, \bbf_i, \bk_i^{\pm1} \; (1 \leq i \leq r-1)$, subject to the relation $\bt_0 \bt_r =\bt_r \bt_0$, and other defining relations which can be found in the algebras $\UUji$ and $\UUij$. (The relations would be different in case $r=1$ as $\bt_{0}$ and $\bt_{r}$ no longer commute.)

\begin{prop}
 \label{prop:homii}
There is an injective $\cR$-algebra homomorphism
$\iiw: \UUii \rightarrow \UUAii$ defined by
\begin{align*}
&\bk_i \mapsto  \bK_i \bK_{-i-1}^{-1},
\\
&\be_i\mapsto \bE_i+\bF_{-i-1}\bK_i^{-1},
\quad
\bbf_i\mapsto \bE_{-i-1} + \bF_i\bK_{-i-1}^{-1},
\quad (1\leq i \leq r-1)
\\
& \bt_0 \mapsto \bE_0+qq_0\inv q_1 \bF_0 \bK_{0}^{-1} +(q_1-q_0^{-1})/(q-q^{-1}) \bK_{0}^{-1},
\\
& \bt_r \mapsto \bE_r+qq_0q_1^{-1} \bF_r\bK_{r}^{-1}+(1-q_0q_1^{-1})/(q-q^{-1})\bK_{r}^{-1}.
\end{align*}
\end{prop}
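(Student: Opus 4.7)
The plan is to follow the strategy already used to prove Propositions \ref{prop:homjj}, \ref{prop:homji}, and \ref{prop:homij}: identify the $\cR$-subalgebra of $\UUAii$ generated by the right-hand sides of the assignment as a quantum symmetric pair coideal subalgebra in the sense of Letzter-Kolb, then invoke \cite[Theorem~7.8]{Ko14} which furnishes both the well-definedness of $\iiw$ as an algebra map and its injectivity.

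First, I would write down the relevant Satake diagram. The underlying affine Dynkin diagram is $A^{(1)}_{\eta-1} = A^{(1)}_{2r-1}$ with vertex set $[0,2r]\setminus\{r+1\}$, viewed as the cycle $0 - 1 - \cdots - (r-1) - r - (r+2) - \cdots - 2r - 0$ (using the convention that indices $r$ and $r+2$ are adjacent). The involution fixes nodes $0$ and $r$, producing loops at both endpoints of the ``folded'' diagram, and swaps $i \leftrightarrow 2r+1-i$ for $1 \leq i \leq r-1$. This diagram simultaneously exhibits the loop at $0$ of the $\ijw$ case (Figure~\ref{figure:ij}) and the loop at $r$ of the $\jiw$ case (Figure~\ref{figure:ji}).

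Second, I would match the proposed generators with Kolb's coideal generators for this Satake diagram. The formulas for $\bk_i, \be_i, \bbf_i$ ($1 \leq i \leq r-1$) are literally those of Propositions \ref{prop:homji} and \ref{prop:homij}, and the formula for $\bt_0$ (respectively $\bt_r$) coincides with the one established in Proposition \ref{prop:homij} (respectively Proposition \ref{prop:homji}). Hence the parameter assignment around the two fixed nodes is consistent: the node $0$ carries the parameter combination associated with $q_0\inv q_1$ as in \eqref{embeding4ij}, while the node $r$ carries $q_0 q_1\inv$ as in \eqref{embeding4ji}. By \cite[Theorem~7.8]{Ko14}, all the defining relations of the abstract coideal algebra $\UUii$ listed in its presentation are then automatically satisfied, and the resulting map to $\UUAii$ is injective.

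Third, I would verify separately the one ``new'' relation $\bt_0 \bt_r = \bt_r \bt_0$ not already covered by the two previous propositions. Because $r \geq 2$, the nodes $0$ and $r$ are non-adjacent in the cycle of $A^{(1)}_{2r-1}$; hence the subalgebras $\langle \bE_0, \bF_0, \bK_0^{\pm 1}\rangle$ and $\langle \bE_r, \bF_r, \bK_r^{\pm 1}\rangle$ of $\UUAii$ commute elementwise via the $q$-Serre and $q$-Cartan relations, which immediately yields commutativity of the images of $\bt_0$ and $\bt_r$. The expected main obstacle is purely bookkeeping: confirming that the Satake datum (diagram involution plus parameter choice) is genuinely admissible in Kolb's sense, which here reduces to checking the standard admissibility inequalities at the two fixed nodes -- inequalities already implicit in the proofs of Propositions \ref{prop:homji} and \ref{prop:homij}. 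Once this is in place, Kolb's theorem finishes the job.
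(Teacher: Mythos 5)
Your proposal is correct and takes essentially the same approach as the paper: the paper's proof likewise identifies the subalgebra generated by the right-hand sides as a quantum symmetric pair coideal subalgebra attached to the Dynkin diagram of type $A^{(1)}_{2r-1}$ with the $\imath\imath$-involution fixing the nodes $0$ and $r$ (Figure~\ref{figure:ii}) and then invokes \cite[Theorem~7.8]{Ko14} for both well-definedness and injectivity. Your explicit verification of $\bt_0\bt_r=\bt_r\bt_0$ from the non-adjacency of the two fixed nodes when $r\geq 2$ is a correct (and slightly more detailed) elaboration of what the paper leaves implicit in its one-line proof.
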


\begin{proof}
The proof is similar to the proof for Proposition \ref{prop:homjj}. The subalgebra here is a quantum symmetric pair coideal subalgebra associated with the Dynkin diagram and involution below, and the proposition follows from {\cite[Theorem~ 7.8]{Ko14}}.
\begin{figure}[ht!]
\caption{Dynkin diagram of type $A^{(1)}_{2r-1}$ with involution of type $\imath\imath$.}
   \label{figure:ii}
\begin{tikzpicture}
\matrix [column sep={0.6cm}, row sep={0.5 cm,between origins}, nodes={draw = none,  inner sep = 3pt}]
{
	&\node(U1) [draw, circle, fill=white, scale=0.6, label = 1] {};
	&\node(U2) {$\cdots$};
	&\node(U3)[draw, circle, fill=white, scale=0.6, label =$r-1$] {};
\\
	\node(L)[draw, circle, fill=white, scale=0.6, label =0] {};
	&&&&
	\node(R)[draw, circle, fill=white, scale=0.6, label =$r$] {};
\\
	&\node(L1) [draw, circle, fill=white, scale=0.6, label =below:$2r-1$] {};
	&\node(L2) {$\cdots$};
	&\node(L3)[draw, circle, fill=white, scale=0.6, label =below:$r+1$] {};
\\
};
\begin{scope}
\draw (L) -- node  {} (U1);
\draw (U1) -- node  {} (U2);
\draw (U2) -- node  {} (U3);
\draw (U3) -- node  {} (R);
\draw (L) -- node  {} (L1);
\draw (L1) -- node  {} (L2);
\draw (L2) -- node  {} (L3);
\draw (L3) -- node  {} (R);
\draw (L) edge [color = blue, loop left, looseness=40, <->, shorten >=4pt, shorten <=4pt] node {} (L);
\draw (R) edge [color = blue,loop right, looseness=40, <->, shorten >=4pt, shorten <=4pt] node {} (R);
\draw (L1) edge [color = blue,<->, bend right, shorten >=4pt, shorten <=4pt] node  {} (U1);
\draw (L3) edge [color = blue,<->, bend left, shorten >=4pt, shorten <=4pt] node  {} (U3);
\end{scope}
\end{tikzpicture}
\end{figure}
\end{proof}

Recalling $\Ld_{n,d}$ from \eqref{def:Ld},
we define
\begin{align*}
\Lambda^{\imath\imath}_{{\eta},d} &= \{ \lambda =(\lambda_0, \lambda_1, \ldots, \lambda_{r+1}) \in \Ld_{n,d}~|~\lambda_0= \lambda_{r+1}=0\} \; (= \Lambda^{\jmath\imath}_{\nn,d} \cap \Lambda^{{\imath\jmath}}_{{\nn},d}).
\end{align*}

Define the right $\bbH$-module
\begin{align*}
\bTii &= \bigoplus_{\lambda \in \Lambda^{\imath\imath}_{{\eta},d}} x_{\lambda}\bbH.
\end{align*}
Following \cite{FL3Wb}, we define the $\iiw$-variant of the Schur algebra $\bSjj$ as follows:
\begin{align*}
\bSii &= \End_{\bbH}(\oplus_{\lambda \in \Lambda^{\imath\imath}_{\eta,d}} x_{\lambda}\bbH) =\bigoplus_{\lambda,\mu \in \Lambda^{\imath\imath}_{\eta,d}} \Hom_{\bbH}(x_{\mu}\bbH,x_{\lambda}\bbH).
\end{align*}

The following is a variant of Lemma~\ref{lem:iso}.
\begin{lem}
We have an isomorphism of $\bbH$-modules:
$\bTii \cong \mathbb{V}_{\eta}^{\otimes d}.$
\end{lem}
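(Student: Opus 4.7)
The plan is to mirror the proof of Lemma~\ref{lem:iso} (and its $\jiw$ analogue in Lemma~\ref{lem:kappaji}) in the $\iiw$-setting. Define an $\cR$-linear map
\[
\kappa^{\iiw}: \bTii \longrightarrow \mathbb{V}_{\eta}^{\otimes d}, \qquad
x_\lambda \longmapsto M_\lambda = v_1^{\otimes \lambda_1} \otimes \cdots \otimes v_r^{\otimes \lambda_r},
\qquad \forall \lambda \in \Lambda^{\imath\imath}_{\eta,d},
\]
extended to $\bTii = \bigoplus_\lambda x_\lambda \bbH$ by the $\bbH$-action. Since each $\lambda \in \Lambda^{\imath\imath}_{\eta,d}$ satisfies $\lambda_0 = \lambda_{r+1} = 0$, the tensor factors appearing in $M_\lambda$ use only $v_k$ with $1\le k\le r$, so $M_\lambda \in \mathbb{V}_\eta^{\otimes d}$, and hence the image lies in $\mathbb{V}_{\eta}^{\otimes d}$ (which is stable under $\bbH$).

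First I would verify that $\kappa^{\iiw}$ is well-defined as a right $\bbH$-module map. By definition of $\bTii$ as a direct sum of $x_\lambda \bbH$, it suffices to check that the assignment $x_\lambda \mapsto M_\lambda$ respects the relations characterizing $x_\lambda$ as an element of $\bbH$, namely the identities $x_\lambda T_i = p_{s_i} x_\lambda$ for $i \in \{0,1,\ldots,d\}\setminus\{\lambda_0,\lambda_{0,1},\ldots,\lambda_{0,r}\}$ (Lemma~\ref{xlm}). Under the assumption $\lambda_0 = \lambda_{r+1} = 0$, both indices $i=0$ and $i=d$ lie in $\{\lambda_0,\lambda_{0,1},\ldots,\lambda_{0,r}\}$ (since $\lambda_0=0$ and $\lambda_{0,r} = d-\lambda_{r+1}=d$), so no relation at the extreme indices needs to be checked for $x_\lambda$; only the internal $T_i$ ($0<i<d$) relations arise, and these are checked by exactly the same computation as in \eqref{FD1}, noting that $v_k \otimes v_k$ (for $1\le k\le r$) absorbs $T_i$ with eigenvalue $q^{-1}$.

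Next I would establish surjectivity via a restricted version of Corollary~\ref{FD}: the submodule $\mathbb{V}_{\eta}^{\otimes d}$ is generated as an $\bbH$-module by $\{M_f \mid 1\le f_1\le f_2\le \cdots \le f_d \le r\}$. Indeed, for any basis vector $M_f \in \mathbb{V}_\eta^{\otimes d}$, writing $M_f = M_{\bar f} Z_f$ with $1\le\bar f_i\le r$ (the residues $0$ and $r+1$ being excluded), the $X_a$-action supplies the $Z_f$ factor and the $T_i$-action ($1\le i\le d-1$) in \eqref{Ti action} permits us to bring $\bar f$ to the dominant range, all within $\mathbb{V}_\eta^{\otimes d}$. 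Since every such dominant $M_f$ equals $M_\lambda$ for some $\lambda \in \Lambda^{\imath\imath}_{\eta,d}$, surjectivity follows.

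Finally, injectivity is a dimension (or more precisely, basis) comparison: $\bTii$ has an $\cR$-basis $\{ x_\lambda T_w \mid \lambda \in \Lambda^{\imath\imath}_{\eta,d},\ w \in \D_\lambda\}$ coming from the Bernstein-Lusztig structure of $\bbH$, and the images $M_\lambda T_w$ are linearly independent in $\mathbb{V}_\eta^{\otimes d}$ by the explicit formulas \eqref{Ti action}--\eqref{T0onV} combined with \eqref{actionX}, using that the leading term of $M_\lambda T_w$ in the basis $\{M_f\}$ is $M_{\lambda\cdot w}$ (with $\lambda\cdot w$ determined by the right $W$-action and distinct as $(\lambda,w)$ varies). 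The main obstacle in this plan is the bookkeeping in the basis/leading-term argument for injectivity, particularly because the $T_0$-action \eqref{T0onV} is long and one has to check that no $v_k$ with $k\equiv 0$ or $k\equiv r+1$ reappears starting from $\mathbb{V}_\eta^{\otimes d}$; however, since $\mathbb{V}_\eta$ is already a direct summand of the $\bbH$-module $\mathbb{V}$ as a sum of permutation modules (as stated in the text preceding the lemma), this stability is automatic and the argument is parallel to the $\jjw$ and $\jiw$ cases.
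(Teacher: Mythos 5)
Your overall strategy coincides with the paper's: the paper proves this lemma exactly as it proves Lemma~\ref{lem:iso}, namely by combining Lemma~\ref{xlm} (the eigenvalue property of $x_\lambda$, which your well-definedness step handles correctly, since for $\lambda_0=\lambda_{r+1}=0$ one indeed has $0,d\in\{\lambda_0,\lambda_{0,1},\ldots,\lambda_{0,r}\}$, so only the type~A relations at internal indices arise) with the appropriate restriction of Corollary~\ref{FD}. However, your surjectivity argument contains a genuine error. You write each basis vector $M_f\in\mathbb{V}_\eta^{\otimes d}$ as $M_{\bar f}Z_f$ ``with $1\le \bar f_i\le r$,'' but the normalization window is $-r\le \bar f_i\le r+1$, and excluding the residues $0$ and $r+1$ leaves $\bar f_i\in\{-r,\ldots,-1\}\cup\{1,\ldots,r\}$: negative residues genuinely occur in $\mathbb{V}_\eta$ (e.g.\ $v_{-1}\in\mathbb{V}_\eta$, as $-1\equiv n-1\not\equiv 0,\,r+1 \pmod n$). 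Consequently your claim that the $X_a^{\pm1}$ together with the $T_i$, $1\le i\le d-1$, bring $\bar f$ into the dominant range fails: these elements generate only $\bbH_A$, whose action via \eqref{Ti action}--\eqref{actionX} preserves the multiset of residues of $f$ modulo $n$ (the $X_a$ shift entries by $n$, and the correction terms $M_{\bar f}P^{(i)}_{\pm}(Z_f)$ involve the same residues). For instance, when $d=1$ the $\bbH_A$-module generated by $\{v_1,\ldots,v_r\}$ is $\mathrm{span}\{v_{k+jn}\mid 1\le k\le r,\ j\in\ZZ\}$ and does not contain $v_{-1}$.

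The missing ingredient is precisely the $T_0$-move that the paper uses to prove Corollary~\ref{FD}: by \eqref{FD2} (equivalently, the case $0<k\le r$, $j=0$ of \eqref{T0onV}) one has $M_fT_0=M_{f\cdot s_0}$ for $0<f_1<r+1$, which changes the sign of the first residue; combining this with the permutation moves \eqref{Ti action} (to bring any coordinate to the first slot, modulo terms handled by induction) and the shifts $X_a^{\pm1}$ yields all of $\mathbb{V}_\eta^{\otimes d}$ from $\{M_f\mid 1\le f_1\le\cdots\le f_d\le r\}$, and all of these moves stay inside $\mathbb{V}_\eta^{\otimes d}$ because $\mathbb{V}_\eta$ is an $\bbH$-submodule of $\mathbb{V}$. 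With this correction the restricted version of Corollary~\ref{FD} holds and your proof goes through. Your injectivity sketch via the basis $\{x_\lambda T_w\mid w\in\D_\lambda\}$ and the leading terms $M_{\lambda\cdot w}$ is sound in spirit, but note that the pairwise distinctness of the leading terms rests on the fact that for $\lambda\in\Lambda^{\imath\imath}_{\eta,d}$ the stabilizer of $(1^{\lambda_1},\ldots,r^{\lambda_r})$ under the right $W$-action \eqref{eq:Wonf} is exactly $W_\lambda$; this uses $1\le f_i\le r<n/2$, so that neither $s_0$ nor $s_d$ (nor any of their conjugates) fixes an entry --- a point worth making explicit, since it is exactly where the constraints $\lambda_0=\lambda_{r+1}=0$ enter.
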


Note $\UUii$ acts on $\mathbb{V}_{\eta}^{\otimes d}$ via the embedding $\iiw: \UUii \rightarrow \UUAii$; we denote this action by $\Psi^{\imath\imath}$. The following theorem can be established similarly as for Theorem~\ref{thm:3-par:ji}.

\begin{thm}
 \label{thm:3-par:ii}
Let $r\ge d \ge 2$. We have the following Schur $(\UUii, \bbH)$-duality:
\begin{align*}
\Psi^{\imath\imath}(\UUii) \simeq & \End_{\bbH} (\mathbb{V}_{\eta}^{\otimes d}),
\\
& \End_{\UUii} (\mathbb{V}_{\eta}^{\otimes d})  \simeq \bbH^{op}.
\end{align*}
\end{thm}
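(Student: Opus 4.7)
The plan is to mimic the proof of Theorem~\ref{thm:3-par:ji}, using the fact that the new coideal $\UUii$ is obtained by ``doubling'' the modification already made on either end of the Dynkin diagram in the $\jiw$ and $\ijw$ cases. Concretely, I would proceed in three steps: commutation of the two actions, surjectivity of $\Psi^{\ii}$ onto $\bSii$, and a double-centralizer argument using the $\bbH$-module identification $\bTii \cong \VV_{\eta}^{\otimes d}$.

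\textbf{Step 1 (commutation).} I would first check that the $\UUii$- and $\bbH$-actions on $\VV_{\eta}^{\otimes d}$ commute. The generators $\be_i, \bbf_i, \bk_i^{\pm 1}$ for $1\le i\le r-1$ act exactly as the corresponding generators in Proposition~\ref{commute}, so their commutation with every $T_i$ (including $T_0$) has already been checked. The only new relations to verify are $[T_0,\bt_0]=0$ and $[T_0,\bt_r]=0$ on $\VV_{\eta}^{\otimes d}$. Since $T_0$ only acts nontrivially on the first tensor factor and both $\bt_0,\bt_r$ are primitive-like with respect to the iterated coproduct, it suffices to verify commutation on $\VV_{\eta}$ itself. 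But the formulas for $\bt_0$ on $\VV_\nn$ and $\bt_r$ on ${}'\VV_\nn$ restrict without modification to $\VV_\eta$, and the $T_0$-action on $\VV_\eta$ is the restriction of the one from \eqref{T0onV}. Hence $[T_0,\bt_0]=0$ on $\VV_\eta$ is exactly the computation already carried out in the proof of Theorem~\ref{thm:3-par:ij}, and $[T_0,\bt_r]=0$ on $\VV_\eta$ is the computation from the proof of Theorem~\ref{thm:3-par:ji}. Thus we obtain an algebra map $\Psi^{\ii}\colon \UUii \to \bSii \simeq \End_{\bbH}(\VV_{\eta}^{\otimes d})$.

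\textbf{Step 2 (generating $\bSii$).} I would then prove the analogue of Proposition~\ref{generator of bSjj}: the subalgebra $S''$ of $\bSii$ generated by $\Psi^{\ii}(\be_i), \Psi^{\ii}(\bbf_i), \Psi^{\ii}(\bk_i^{\pm 1})$ ($1\le i\le r-1$) and $\Psi^{\ii}(\bt_0), \Psi^{\ii}(\bt_r)$ equals $\bSii$. The weight-space idempotents $\phi^e_{\lambda,\lambda}$ lie in $S''$ by the Vandermonde argument of Lemma~\ref{xi(h_a)} applied to $\bk_i^{\pm 1}$. The $\etil_i, \ftil_i$-type matrix units $\phi^e_{\lambda,\mu}$ with $\mu = \etil_i(\lambda)$ or $\ftil_i(\lambda)$, for $1\le i\le r-1$, are produced by Lemma~\ref{xi(e_i)} applied to $\be_i, \bbf_i$, and these suffice to walk between all $\lambda,\mu\in \Ld^{\ii}_{\eta,d}$ via the chain construction already used (note $\omega\in\Ld^{\ii}_{\eta,d}$ since $\omega_0=\omega_{r+1}=0$, using $r\ge d$). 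Thus $\phi^e_{\omega,\lambda},\phi^e_{\lambda,\omega}\in S''$ for all $\lambda\in\Ld^{\ii}_{\eta,d}$. Finally, as in the $\ji$-case, an inspection of the images in Proposition~\ref{prop:homii} together with the action \eqref{actionUii} shows that
\[
\Psi^{\ii}(\bt_0) \in \sum_{\lambda}\cR\,\phi^{s_0}_{\lambda,\lambda}, \qquad \Psi^{\ii}(\bt_r) \in \sum_{\lambda}\cR\,\phi^{s_d}_{\lambda,\lambda},
\]
so $\phi^{s_0}_{\omega,\omega},\phi^{s_d}_{\omega,\omega}\in S''$; combined with the $\phi^{s_i}_{\omega,\omega}$ ($1\le i\le d-1$) obtained from $\phi^e_{\omega,\etil_i(\omega)}\cdot\phi^e_{\etil_i(\omega),\omega}$ exactly as in Proposition~\ref{generator of bSjj}, this shows $\phi^e_{\omega,\omega}\bSii\phi^e_{\omega,\omega}\subset S''$, whence $S''=\bSii$.

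\textbf{Step 3 (double centralizer).} The first isomorphism $\Psi^{\ii}(\UUii) \simeq \End_{\bbH}(\VV_{\eta}^{\otimes d})$ now follows from Step~2 together with the $\bbH$-module isomorphism $\bTii \cong \VV_{\eta}^{\otimes d}$. For the second, the same chain of isomorphisms as at the end of the proof of Theorem~\ref{thm:3-par} gives
\[
\End_{\UUii}(\VV_\eta^{\otimes d}) \simeq \End_{\bSii}(\bTii) \simeq (\phi^e_{\omega,\omega}\bSii\phi^e_{\omega,\omega})^{op} \simeq \bbH^{op},
\]
where the last isomorphism uses that the double coset combinatorics $\Ld^{\ii}_{\eta,d}\ni\omega$ reproduces a complete set of $W$-coset representatives for $W_\omega\backslash W/W_\omega \cong W$ (since $W_\omega = \{e\}$). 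The main potential obstacle is the verification in Step~1 that $[T_0,\bt_0]=[T_0,\bt_r]=0$ on $\VV_\eta$, together with the relation $\bt_0\bt_r=\bt_r\bt_0$: the commutativity of $\bt_0$ and $\bt_r$ in $\UUii$ requires the nodes $0$ and $r$ of the Dynkin diagram in Figure~\ref{figure:ii} to be non-adjacent, which is exactly the hypothesis $r\ge 2$ (equivalently $d\ge 2$) used in the statement. Everything else reduces to the already-verified $\ji$- and $\ij$-cases applied separately at the two ends.
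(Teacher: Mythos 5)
Your proposal is correct and takes essentially the same approach as the paper, whose entire proof consists of invoking the previous counterparts together with the two formulas $\Psi^{\ii}(\bt_0)\in\sum_{\ld\in\Ld^{\ii}_{\eta,d}}\cR\,\phi^{s_0}_{\ld,\ld}$ and $\Psi^{\ii}(\bt_r)\in\sum_{\ld\in\Ld^{\ii}_{\eta,d}}\cR\,\phi^{s_d}_{\ld,\ld}$ that you correctly single out in Step 2 as the source of $\phi^{s_0}_{\omega,\omega}$ and $\phi^{s_d}_{\omega,\omega}$ (these replace the $\be_0,\bbf_0$- and $\be_r,\bbf_r$-mechanisms of the $\jjw$-case, which are unavailable here). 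One trivial slip: under the standing assumption $r\ge d$, the hypothesis $d\ge 2$ implies $r\ge 2$ but is not equivalent to it.
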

\proof
The proof is similar to the previous counterparts except that we need the following formulas for the images of $\bt_0, \bt_r$ which follow from a direct computation:
\begin{equation*}
\Psi^{\ii}(\bt_0) \in \sum_{\ld \in \Ld^{\ii}_{\eta, d}}\cR \phi^{s_0}_{\ld, \ld},
\quad
\Psi^{\ii}(\bt_r) \in \sum_{\ld \in \Ld^{\ii}_{\eta, d}}\cR \phi^{s_d}_{\ld, \ld}.
\end{equation*}
The theorem is proved.
\endproof



\begin{thebibliography}{BKLW18}\frenchspacing



\bibitem[Bao17]{Bao17}
H. Bao,
{\it  Kazhdan-Lusztig theory of super type $D$ and quantum symmetric pairs},
Represent. Theory {\bf 21} (2017), 247--276. 

\bibitem[BKLW18]{BKLW}
H.~Bao, J.~Kujawa, Y.~Li and W.~Wang,
{\em Geometric {S}chur duality of classical type}, Transform. Groups {\bf 23} (2018), no. 2, 329--389.


\bibitem[BW18]{BW18}
	H. Bao and W. Wang,
	{\em A new approach to Kazhdan-Lusztig theory of type $B$ via quantum symmetric pairs}, Ast\'erisque, no. {\bf 402}, 2018, vii+134 pp.

\bibitem[BWW18]{BWW18}
	H.~Bao, W.~Wang and H.~Watanabe,
	{\em Multiparameter quantum Schur duality of type B}, Proc. Amer. Math. Soc. {\bf 146} (2018), 3203--3216.

\bibitem[CGM14]{CGM14}
H.~Chen, N.~Guay and X.~Ma,
{\em Twisted Yangians, twisted quantum loop algebras and affine Hecke algebras of type $BC$},
Trans. Amer. Math. Soc. {\bf 366} (2014), 2517--2574.

\bibitem[CP94]{CP94} V. Chari and A. Pressley,
        {\em A guide to quantum groups},
        Cambridge University Press, Cambridge, 1994.

\bibitem[DDF12]{DDF12}
B.~Deng, J.~Du and Q.~Fu,
{\em A double Hall algebra approach to affine quantum Schur-Weyl theory.}
London Math. Soc. Lect. Note Series, {\bf 401},
Cambridge University Press, Cambridge, 2012.


\bibitem[DJ89]{DJ89}
R.~Dipper and G.~James,
{\em The $q$-Schur algebra},
Proc. London Math. Soc. {\bf 59} (1989), 23--50.



\bibitem[FL15]{FL15}
Z. Fan and Y. Li,
{\em Geometric Schur duality of classical type, II},
Trans. Amer. Math. Soc., Series B. {\bf 2} (2015), 51--92.


\bibitem[FLW16a]{FL3Wa}
Z.~Fan, C.~Lai, Y.~Li, L.~Luo and W.~Wang,
{\em Affine flag varieties and quantum symmetric pairs},
Mem. Amer. Math. Soc. (to appear),
\href{https://arxiv.org/abs/1602.04383}{arXiv:1602.04383}.


\bibitem[FLW16b]{FL3Wb}Z.~Fan, C.~Lai, Y.~Li, L.~Luo and W.~Wang,
{\em Affine Hecke algebras and quantum symmetric pairs},
\href{https://arxiv.org/abs/1609.06199}{arXiv:1609.06199v2}.


\bibitem[Gr99]{Gr99}
R.~Green,
\emph{The affine {$q$}-{S}chur algebra}, J. Algebra \textbf{215} (1999), 379--411.


\bibitem[Jim86]{Jim86} M. Jimbo,
{\em A $q$-analogue of $U({\mathfrak g\mathfrak l}(N+1))$, Hecke
algebra, and the Yang-Baxter equation}, Lett. Math. Phys. {\bf 11}
(1986), 247--252.


\bibitem[Ka09]{Ka09}
S. Kato,
		{\em An exotic Deligne-Langlands correspondence for symplectic groups},
		 Duke Math. J. {\bf 248} (2009), 305--371.
	

\bibitem[Ko14]{Ko14}
S. Kolb,
		{\em Quantum symmetric Kac-Moody pairs},
		 Adv. in Math. {\bf 267} (2014), 395--469.


\bibitem[KMS95]{KMS}
M.~Kashiwara, T.~Miwa and E.~Stern, {\em Decomposition of $q$-deformed Fock spaces}, Selecta Math. (N.S.) {\bf 1} (1995), 787--805.

\bibitem[Le99]{Le99} G. Letzter,
{\em Symmetric pairs for quantized enveloping algebras}, J. Algebra {\bf 220}  (1999), 729--767.

\bibitem[Lu89]{Lu89}
G.~Lusztig,
{\em Affine Hecke algebras and their graded version}, J. Amer. Math. Soc. {\bf 2} (1989), 599--635.



\bibitem[VV11]{VV11}
M.~Varagnolo and E.~Vasserot,
{\em Canonical bases and affine Hecke algebras of type B},
Invent. Math., {\bf 185} (2011), 593--693.

\end{thebibliography}
\end{document}